\newcommand{\func}[1]{\ensuremath{\mathrm{#1} \:} }
\newcommand{\arccosh}[0]{\func{arccosh}}
\numberwithin{equation}{section}
\newtheorem{theorem}{Theorem}[section]
\newtheorem{lemma}[theorem]{Lemma}
\newtheorem{proposition}[theorem]{Proposition}
\theoremstyle{definition}
\newtheorem{definition}[theorem]{Definition}
\theoremstyle{remark}
\newtheorem{remark}[theorem]{Remark}
\newcommand{\e}[0]{\mathbf e}
\newcommand{\RRR}[0]{\mathsf T}
\newcommand{\Real}[0]{\mathbb R}
\title[Logarithmically spiraling helicoids]{Logarithmically spiraling  helicoids}
\author{Christine Breiner}
\address{Department of Mathematics, Fordham University, Bronx, NY 10458}
\email{cbreiner@fordham.edu}
\author{Stephen J. Kleene}
\address{Department of Mathematics, MIT, Cambridge, MA 02139}
\email{skleene@math.mit.edu}
\thanks{C. Breiner was supported in part by NSF grant DMS-1308420 and an AMS-Simons Travel Grant. S.J. Kleene was partially supported by NSF grant DMS-1004646.}
\begin{document}
\maketitle

\begin{abstract}
We construct helicoid-like embedded minimal disks with axes along self-similar curves modeled on logarithmic spirals. The surfaces have a self-similarity inherited from the curves and the nature of the construction. Moreover, inside of a ``logarithmic cone'', the surfaces are embedded.

MSC 53A05, 53C21.
\keywords{Differential geometry\and minimal surfaces\and
partial differential equations\and perturbation methods}
\end{abstract}

\section{Introduction}

In this article we construct helicoid-like embedded minimal disks with axes modeled on a class of embedded self-similar curves called logarithmic spirals. Logarithmic spirals are solutions $\gamma(z)$ to the initial value problem
\begin{align} \notag
\kappa(z) = \kappa_0 e^{ - \xi z}, \quad \tau (z) = \tau_0 e^{- \xi z}
\end{align}
where $\tau$ and $\kappa$ denote the torsion and curvature of the unknown curve, respectively, and where the constant $\xi$ controls the rate of exponential propagation. 

\begin{figure}[htb]\label{spirals}
\begin{center}
\includegraphics[height=2in,angle= 0 ]{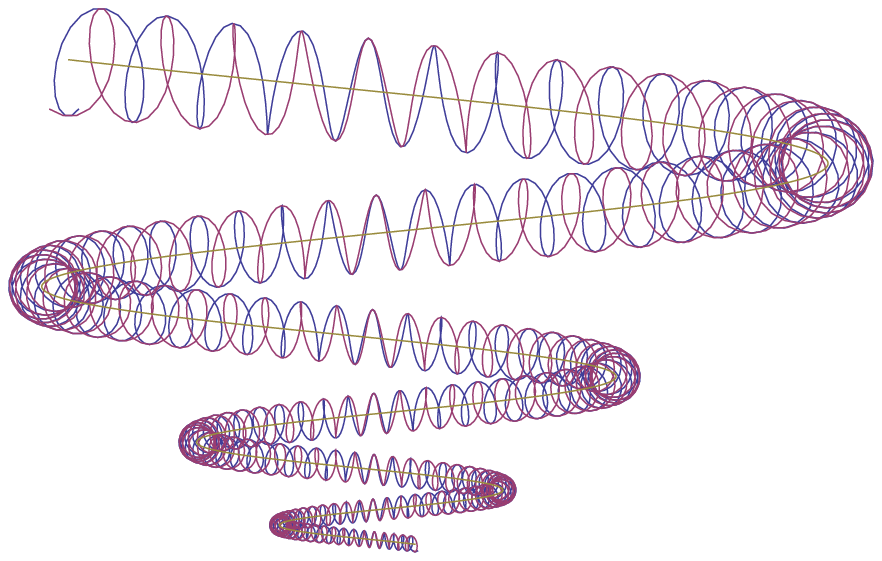} \includegraphics[height=2in,angle= 0 ]{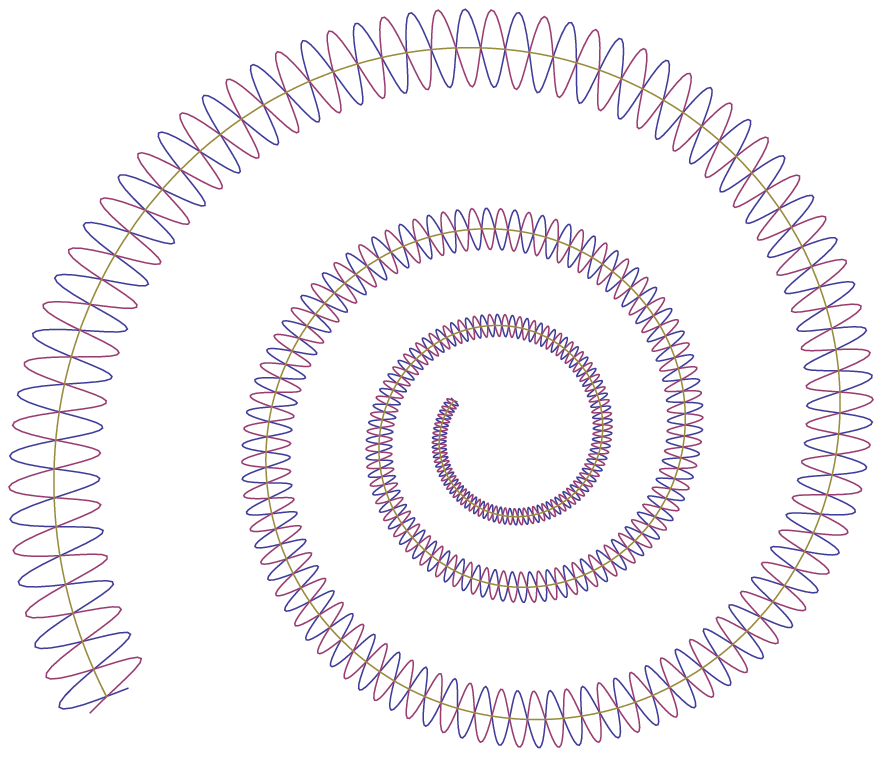}
\caption{Two examples of logarithmic spirals with the boundaries of the corresponding surfaces $S_\delta$.}\end{center}
\end{figure}Our main theorem can then be stated as:

\begin{theorem} \label{TheoremForTheCommonMan}
For each $\delta>0$ sufficiently small, there exist minimal disks $S_\delta$ such that
\begin{enumerate}
\item Up to rigid motion, the surfaces $S_\delta$ exhibit a discrete dilation invariance. 
\item  The surfaces $S_\delta$ are embedded inside an open set $\hat{T}$ containing $\gamma(z)$ independent of $\delta$. As $\delta \rightarrow 0$ the surfaces $S_\delta$ converge smoothly away from $\gamma(z)$ to a foliation  of $\hat{T}$ by planes orthogonal to $\gamma(z)$.
\end{enumerate}
\end{theorem}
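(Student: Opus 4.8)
The plan is to realize each $S_\delta$ as a small normal graph over an explicit approximate surface built from a helicoid and the geometry of $\gamma$, and to solve the minimal surface equation by a fixed-point argument carried out equivariantly with respect to the self-similarity of $\gamma$. The first step is to exploit that self-similarity of the axis: because $\kappa$ and $\tau$ decay at the common exponential rate $\xi$, rescaling $\gamma$ by a suitable factor $\lambda = \lambda(\xi,\kappa_0,\tau_0)$ reproduces $\gamma$ up to a rigid motion, which identifies a screw-dilation $\Phi$ fixing $\gamma$. I would introduce Fermi-type coordinates $(s,w)$ along $\gamma$, with $s$ an exponentially rescaled parameter along the curve and $w$ ranging in the normal plane, chosen so that $\Phi$ acts as a shift in $s$ together with a fixed dilation and rotation of $w$. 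The PDE problem then becomes periodic modulo $\Phi$, and this $\Phi$-periodicity plays the role of compactness throughout the argument.

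Next I would construct the approximate solution $S_\delta^0$. Near $\gamma$ it is modeled on a helicoid of pitch $\sim\delta$ with axis $\gamma$: in each normal plane it is, to leading order, a family of segments through $\gamma(s)$ whose direction rotates as $s$ varies, with the rotation rate tuned to the pitch and to the curvature and torsion of $\gamma$ so that the leading-order mean curvature vanishes. Away from $\gamma$, but inside a ``logarithmic cone'' $\hat T$ about the curve, the sheets of $S_\delta^0$ are nearly flat graphs over the normal planes, nearly parallel, with consecutive separation $\sim\delta$. One then checks that $S_\delta^0$ can be taken invariant under $\Phi$, and that its mean curvature is small — $O(\delta^{\beta})$ for some $\beta>0$ — in a weighted Hölder norm adapted to $\Phi$, with the error supported away from $\gamma$.

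The analytic heart is the linearization. Writing $S_\delta = \mathrm{graph}_{S_\delta^0}(u)$, minimality becomes $\mathcal L u = Q(u) + \mathcal H$, where $\mathcal L = \Delta_{S_\delta^0} + |A_{S_\delta^0}|^2$ is the Jacobi operator, $\mathcal H$ the (small) mean-curvature error, and $Q$ is quadratic in $(u,\nabla u,\nabla^2 u)$. I would prove a uniform-in-$\delta$ invertibility estimate for $\mathcal L$ on $\Phi$-equivariant functions in suitable weighted Hölder spaces: away from $\gamma$, $\mathcal L$ is a small perturbation of the flat Laplacian on a union of nearly parallel planes, controlled by separation of variables and the $\Phi$-periodicity; near $\gamma$, one uses that the helicoid's Jacobi operator is non-degenerate within the equivariant class, with the blow-up of $|A_{S_\delta^0}|^2$ at the axis absorbed by the weight. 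Granting this estimate, the contraction mapping principle produces a unique small equivariant solution $u_\delta$ with $\|u_\delta\|$ controlled by $\|\mathcal H\| = O(\delta^\beta)$, and elliptic regularity makes $S_\delta$ a smooth minimal disk inheriting the $\Phi$-invariance; this is conclusion (1).

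For conclusion (2): since $u_\delta$ is small compared with the $O(\delta)$ sheet separation inside $\hat T$, the sheets of $S_\delta$ stay disjoint graphs over the normal planes of $\gamma$ there, so $S_\delta \cap \hat T$ is embedded, with $\hat T$ the $\delta$-independent region where the ``nearly parallel sheets'' picture is valid. As $\delta\to 0$ the sheets simultaneously flatten and accumulate, and interior elliptic estimates for the minimal surface equation give smooth subsequential convergence, away from $\gamma$, to the foliation of $\hat T$ by planes orthogonal to $\gamma$. I expect the main obstacle to be precisely the uniform linear estimate for $\mathcal L$: reconciling the curvature concentration along $\gamma$ with the merely $\Phi$-periodic, non-compact ambient geometry, and choosing the weights — at the accumulation point of the spiral and in the normal directions — so that $\mathcal L$ is uniformly invertible on the equivariant class while the quadratic term $Q$ still contracts.
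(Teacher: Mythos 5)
Your overall skeleton does match the paper: the surface is built by bending a helicoid along $\gamma$ via an exponentially scaled Fermi-type map, the screw-dilation symmetry of $\gamma$ is used to conjugate the mean-curvature operator into a genuinely $2\pi$-periodic ("equivariant") problem on a cylinder, the solution is a small normal graph found by a fixed-point argument in weighted H\"older spaces, and embeddedness plus the $\delta\to 0$ foliation statement come from the smallness of the graph relative to the sheet separation inside the logarithmic cone where the tube map is a diffeomorphism. However, there is a genuine gap at the analytic heart of your argument: you assert that ``the helicoid's Jacobi operator is non-degenerate within the equivariant class'' and then run a contraction mapping. This is false. The Jacobi fields generated by ambient translations, $\kappa_x=\cos\theta\,\cosh^{-1}s$, $\kappa_y=\sin\theta\,\cosh^{-1}s$, $\kappa_z=\tanh s$, are themselves $2\pi$-periodic in $\theta$, hence lie in your equivariant class; under the Gauss map the periodic problem becomes $\Delta_{\mathbb S^2}+2$ on the sphere, whose kernel is spanned by the coordinate functions. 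Consequently $\cosh^2(s)\mathcal L_F$ (and any small perturbation of it) admits no uniform-in-$(\delta,\ell)$ bounded inverse on equivariant weighted spaces: on the truncated cylinder the Dirichlet problem has eigenvalues tending to zero as $\ell\to\infty$ precisely because $\kappa_x,\kappa_y$ nearly satisfy the boundary conditions. A contraction scheme built on such an ``inverse'' therefore breaks down, and no choice of weights at the axis or at infinity removes the obstruction.

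The paper's proof is organized around exactly this point. The inhomogeneous term is split into its $\theta$-average (inverted explicitly by ODE integration, formula \eqref{MIT1}) and a zero-meridian-average part; the latter is made $L^2$-orthogonal to $\underline\kappa$ by subtracting multiples of the ``substitute kernel'' $w_x=\cosh^2(s)\mathcal L_F u_x$, $w_y=\cosh^2(s)\mathcal L_F u_y$ (the $\kappa_z$-projection vanishes automatically by the averaging), and only then is Proposition \ref{flat_inverse} applicable. This forces two extra scalar unknowns $(b_x,b_y)$ into the fixed-point scheme — the paper uses Schauder on the compact convex set $\Xi$, not a contraction with uniqueness — and the associated graph directions $u_x,u_y$ grow like $\cosh s$, faster than the admissible weight $\cosh^{3/4}s$, so they must be tracked separately both in the nonlinear estimates (Proposition \ref{SchauderSetProps}) and in the embeddedness argument (Proposition \ref{SubstituteKernelGraphsAreEmbedded}). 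Without some version of this kernel-handling mechanism (or an equivalent geometric repositioning argument absorbing the translational obstructions), your linear step, and hence the whole fixed-point argument and the claimed uniqueness, does not go through.
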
\noindent This result follows immediately from a much more general theorem, which we state below.

Our construction is related to the following more general question: Given a singly periodic minimal surface $\Sigma$, a smooth curve $\gamma$ and a non-negative function $\lambda: \gamma \rightarrow \Real^+$, can one obtain a minimal surface in a tubular neighborhood of $\gamma$ by bending $\Sigma$ along $\gamma$ and scaling by $\lambda$? If so, what are the restrictions on the scale function $\lambda$?  This and related problems arise in several contexts, including gluing constructions for minimal surfaces and the theory of Colding-Minicozzi type laminations, and in certain special cases is well understood. One of the  simplest non-trivial cases is that of a constant scale function $\lambda \equiv c $ and a periodic curve $\gamma$, which arises naturally in highly symmetric gluing constructions such as \cite{KapMin}, where Kapouleas has developed the theory extensively.  In this case, the  candidate surface can be constructed with the same periodicity as the underlying curve and the problem descends to a compact quotient of the periodic minimal surface $\Sigma$ which simplifies the analysis considerably.     
 For constructions which do not descend to compact quotients, indirect methods are typically employed. For example, Meeks and Weber in  \cite{MW} use analytic methods  to construct helicoidal minimal surfaces in tubes along curves essentially corresponding to the case that $\lambda$ is a constant. In \cite{HW3}   Hoffmann and White employ variational techniques to construct minimal laminations in tubes with singularities on prescribed compact subsets of curves. The second author, in \cite{Kl}, uses the Weierstrass representation to prove the same result as in \cite{HW3}. For additional constructions in the same spirit, see also \cite{CMPVNP,BDeanPaper,SidPaper}. 
 
 One of the main difficulties in approaching such constructions with more direct methods lies in understanding  linear analysis on periodic minimal surfaces that does not descend to any compact quotient. Directly perturbing a bent minimal surface back to minimality essentially entails inverting the stability operator in suitable function spaces and, if reasonable bounds are expected, then it must be possible to impose orthogonality of the error term to a continuous spectrum of small eigenvalues. A second difficulty is appropriately measuring the size of the error term. When the analysis descends to compact quotients--for example in the case that $\lambda$ is constant and $\gamma$ is a circle--H\"older norms are equivalent to Sobolev norms. In the general case, however, there is no reason to expect the initial error will lie in $L^2$ globally so that standard PDE methods have to be carefully employed. 
 As a heuristic, it is  reasonable to approach the general problem by first asking: how much of the geometry of the bent and scaled surface is self-similar?   In the case of a constant scale function, curves of constant torsion and curvature (spirals) are natural candidates for the best approximating self-similar problem. When the derivative of the scale function is non-vanishing--corresponding in our construction to $\xi \neq 0$--the most natural candidates are the logarithmic spirals,  where the rate $\xi$ of propagation is proportional to the derivative of the scale function. Our goal in this paper is then to formulate precisely  the  broadest class of self-similar problems for which the analysis descends to compact quotients and to solve in the case that the periodic minimal surface is the helicoid.  The helicoid is a natural choice, due to the simplicity of its topology and geometry. However, we emphasis that our methods do not rely inherently on the trivial topology of the helicoid, and should be applicable to a broad class of periodic minimal surfaces, including the singly-periodic Scherk surfaces.

\subsection{Precise statement of the main theorem}

Given $\RRR = (\RRR_{i  j})$ a fixed anti-symmetric $3 \times 3$ matrix, $\xi \in \Real$, and $\delta>0$ there exists a ``logarithmic spiral'', $\gamma (z)$, parametrized by $z \in \mathbb{R}$, with the property
\begin{align}\label{frame_derivative}
\gamma' (z) = e^{\delta \xi  z} \e_3 (z), \quad \e' (z) = \delta \RRR \e(z).
\end{align}
Here $\e = \{\e_1, \e_2, \e_3 \}$ is an orthonormal frame along $\gamma$. The curvature and torsion of $\gamma$ are completely determined by $\RRR$ and the frame and $\kappa = \delta \kappa_0e^{-\delta \xi z}, \tau = \delta \tau_0 e^{-\delta \xi z}$ where $\delta\kappa_0, \delta\tau_0$ correspond to the curvature and torsion of $\gamma$ at $z=0$. We use these curves to define a map $M: \Real^3 \to \Real^3$ given by
\begin{align} \label{Mdef}
M(x, y, z) : = \gamma (z) + e^{\delta   \xi z}\left\{ x \e_1 + y \e_2\right\}.
\end{align}
We then  construct helicoid-like minimal surfaces as graphs over a surface obtained by composing $M$ with the conformal parameterization of the helicoid\begin{align} \label{Fdef}
F(s, \theta) = \sinh(s) \sin (\theta) \e_x + \sinh(s) \cos (\theta) \e_y +  \theta \e_z.
\end{align}The geometry of the surfaces possess a periodicity inherited from properties of the curve $\gamma$ and the fact that we solve the problem on periodic function spaces.  When $\delta>0$ is sufficiently small, the map $M$ is a diffeomorphism of a tube along the $z$-axis, with radius comparable to $1/\delta$, onto its image. 
Properties of $\gamma$ and the diffeomorphism imply that as $\kappa_0 \to 0, \tau_0 \to 0, \delta\to 0$, the embedded component of $M\circ F$ about $\gamma$ converges to a rigid motion of the helicoid.  The more general theorem is then:
\begin{theorem} \label{RegularTheorem}Given a non-trivial anti-symmetric matrix $\RRR$ and a  constant $\xi \in \Real$, there exist $\epsilon_1 > 0$ and $\delta_0 > 0$ so that: For any $0<\delta <\delta_0/|\xi|$ and $\ell > 16$  satisfying $\delta (1 + |\RRR|+  |\xi| )\ell  \leq \epsilon_1$, there exists a curve $\gamma$ satisfying \eqref{frame_derivative} and a  periodic function $u (s, \theta): \Lambda \subset \mathbb{R}^2 \rightarrow \mathbb{R}$ where $\Lambda := \{(s, \theta) : \cosh(s) 
\leq \ell/4 \}$ such that the normal graph over 
\begin{align} \label{Gdef}
G(s, \theta)  : = M \circ F (s, \theta)
\end{align}
 by $w (s, \theta) : = e^{\delta \xi  \theta} u (s, \theta)$ is an immersed minimal disk with boundary. 
 
Moreover, the surface is embedded if, for $\rho_0 = \sqrt{\kappa_0^2 +\tau_0^2}$,
\begin{align} \notag
\ell \leq \frac{1}{\delta \xi} \left( \frac{e^{\pi \xi/ \rho_0} - 1}{e^{\pi \xi/\rho_0} + 1}\right) \sqrt{ \frac{\tau_0^2 + \xi^2}{\rho_0^2 + \xi^2}}.
\end{align}
\end{theorem}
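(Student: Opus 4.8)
The plan is to perturb the bent-and-scaled helicoid $G = M \circ F$ back to minimality by solving a quasilinear elliptic PDE for the graph function, and then to exploit the built-in discrete scaling symmetry of the construction to reduce the analysis to a compact quotient. First I would compute the mean curvature $H$ of the graph of $w = e^{\delta\xi\theta}u$ over $G$, writing it in the form $\mathcal{L}u = Q(u)$, where $\mathcal{L}$ is the (conjugated) Jacobi/stability operator of the helicoid perturbed by $\delta$-small terms coming from $\RRR$ and $\xi$, and $Q$ collects the quadratic and higher-order error together with the ``initial error'' $H(G)$ itself. The key structural point, to be verified by direct computation from \eqref{frame_derivative}, \eqref{Mdef}, \eqref{Fdef}, is that the substitution $w = e^{\delta\xi\theta}u$ is exactly the one that makes the whole equation invariant under the screw-dilation $(s,\theta) \mapsto (s, \theta + 2\pi)$ composed with the ambient dilation by $e^{2\pi\delta\xi}$ and the rigid motion $e^{2\pi\delta\RRR}$; this is what lets us work on periodic function spaces on $\Lambda$ and thereby descend to a compact fundamental domain $\{|\theta|\le\pi\}\cap\Lambda$.

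Next I would set up weighted Hölder (or Schauder) norms adapted to the helicoid geometry on this compact quotient, in which $\mathcal{L}_0$ (the unperturbed helicoid Jacobi operator) is invertible with bounds that degenerate in a controlled way as $\ell\to\infty$ (equivalently, as the domain approaches the full helicoid). Since the spectrum of $\mathcal{L}_0$ on a truncated helicoid strip has a spectral gap controlled by $\ell$, the inverse exists once the Dirichlet boundary is imposed on $\partial\Lambda$; the hypothesis $\ell > 16$ and $\cosh s \le \ell/4$ fix the geometry so these bounds are explicit. Then I would estimate the initial error: $\|H(G)\|$ is $O(\delta(1+|\RRR|+|\xi|))$ on the fundamental domain, and the nonlinearity $Q$ is quadratically small with a Lipschitz constant going to zero with $\delta$ and $\ell^{-1}$. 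The condition $\delta(1+|\RRR|+|\xi|)\ell \le \epsilon_1$ is precisely what makes (norm of $\mathcal{L}^{-1}$) $\times$ (size of initial error) small enough to run a contraction mapping / fixed-point argument in a small ball, producing the periodic solution $u$, hence the immersed minimal disk; a standard perturbation of $\mathcal{L}_0$ to $\mathcal{L}$ absorbs the lower-order $\delta$-terms. The smooth convergence statements and the reduction of Theorem~\ref{TheoremForTheCommonMan} from Theorem~\ref{RegularTheorem} follow by letting $\kappa_0,\tau_0,\delta\to 0$ and invoking the stated convergence of $M\circ F$ to a rigid motion of the helicoid.

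Finally, for embeddedness I would analyze the geometry of $G$ directly rather than the perturbation. The map $M$ is a diffeomorphism on a tube of radius $\sim 1/\delta$; the helicoid sheets $F(\cdot,\theta)$ and $F(\cdot,\theta+2\pi)$ are separated in the $\e_3$-direction, but the bending by $\gamma$ and the exponential scaling tilt and compress them, so two sheets a full period apart can collide once the strip is wide enough. The worst case is along the ``slow'' direction of the logarithmic spiral, and balancing the vertical separation $2\pi$ against the transverse displacement produced over a period — which involves the rotation angle $\pi\rho_0$ between consecutive crossings and the dilation factor $e^{\pi\xi/\rho_0}$, whence the hyperbolic-tangent-type factor $(e^{\pi\xi/\rho_0}-1)/(e^{\pi\xi/\rho_0}+1)$ — yields exactly the stated bound on $\ell$; one then checks the $O(\delta)$ graph correction $w$ does not destroy this, using $\|w\|$ small.

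\textbf{Main obstacle.} The hardest part is the linear analysis: obtaining invertibility of $\mathcal{L}_0$ on the truncated helicoid with norm bounds that are explicit and polynomial in $\ell$, in function spaces in which the initial error is genuinely controlled (recall the error need not be globally $L^2$), and then showing the $\delta$-perturbation $\mathcal{L}-\mathcal{L}_0$ and the nonlinearity $Q$ respect these spaces with the advertised small constants. Making the single scalar hypothesis $\delta(1+|\RRR|+|\xi|)\ell\le\epsilon_1$ simultaneously tame the operator norm, the initial error, and the quadratic term is the crux of the argument.
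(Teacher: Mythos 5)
Your overall framework (write the equation for the graph function, use the $e^{\delta\xi\theta}$ conjugation to descend to a periodic problem on a fundamental domain, invert the linearized operator, close with a fixed point, and get embeddedness from the geometry of $M$ and the spiral) matches the paper's, but there is a genuine gap at the step you yourself flag as the crux. You assert that, with Dirichlet conditions on $\partial\Lambda$, the helicoid stability operator has a spectral gap ``controlled by $\ell$'' and hence an inverse with usable bounds. It does not: on the periodic quotient, $\cosh^2(s)\mathcal{L}_F=\Delta_\Omega+2\cosh^{-2}(s)$ is conjugate under the Gauss map to $\Delta_{\mathbb{S}^2}+2$, whose kernel is three--dimensional, spanned by $\kappa_x=\cos\theta\,\cosh^{-1}(s)$, $\kappa_y=\sin\theta\,\cosh^{-1}(s)$, $\kappa_z=\tanh(s)$. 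Truncating at $\cosh(s)\sim\ell$ and imposing Dirichlet conditions turns $\kappa_x,\kappa_y$ into quasi-eigenfunctions with eigenvalues of size roughly $\ell^{-2}$, so the inverse norm degenerates at least like $\ell^{2}$; since the error term has no reason to be orthogonal to these modes, your contraction scheme would require something like $\delta\,\ell^{9/4}$ small, which is incompatible with the stated hypothesis $\delta(1+|\RRR|+|\xi|)\ell\le\epsilon_1$ in the regime where the embeddedness bound allows $\ell$ up to a multiple of $1/(\delta\xi)$. This is exactly why the paper does \emph{not} solve a Dirichlet problem: it inverts only modulo a ``substitute kernel'' in the sense of Kapouleas, introducing two extra unknowns $b_x,b_y$ multiplying explicit functions $u_x,u_y\sim\cosh(s)$ whose images $w_x=\cosh^2(s)\mathcal{L}_Fu_x$, $w_y$ are used to cancel the $\kappa_x,\kappa_y$--projections of the inhomogeneous term (Propositions \ref{flat_inverse} and \ref{FlatTotalInverse}); the $\kappa_z$--projection is killed automatically by splitting the error into its meridian average, inverted by an explicit ODE formula, and a zero--average part. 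The fixed point is then run by Schauder on triples $(v,b_x,b_y)$ rather than by contraction on $u$ alone, and the exponentially growing pieces $b_xu_x+b_yu_y$ need a separate argument (Proposition \ref{SubstituteKernelGraphsAreEmbedded}) to see they do not destroy embeddedness. None of this machinery, or any replacement for it, appears in your proposal, so the scheme as described does not close.

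Two smaller points. First, the paper does not perturb off $G$ alone: the approximate solution already includes the graph of $e^{\delta\xi\theta}u_0$, where $u_0$ is the solution from \cite{BK} for the straight-axis, exponentially scaled helicoid $G_0$; this removes the $O(\delta|\xi|)$ part of the initial error and leaves $Q[0]=O(\delta|\RRR|\cosh(s))$, which is what lets the single hypothesis on $\delta(1+|\RRR|+|\xi|)\ell$ do all the work. Second, your embeddedness heuristic is in the right spirit but the paper's mechanism is cleaner: the stated bound on $\ell$ is precisely the radius of the tube on which $M$ is a global diffeomorphism (Proposition \ref{CurveClassification}), and one checks locally that the minimal graph is a small graph over $F$ (including the $b_xu_x+b_yu_y$ part), after which injectivity of $M$ gives global embeddedness.
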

For the class of curves satisfying \eqref{frame_derivative}, as long as $\RRR$ is non-trivial and $\xi \neq 0$, the global isometry group of the curve $\gamma$ is generated by the action $e^{\delta \xi \tilde \theta }R_{\delta \rho_0  \tilde \theta}$ where $R_t$ corresponds to a rotation in the $xy$-plane by angle $t$. The isometry of $\gamma$ thus implies a periodicity of the form
\[
\gamma(\theta + 2\pi/(\delta \rho_0)) = e^{2\pi \xi /\rho_0}\gamma(\theta)\, \text{      or      }\, \gamma(\theta + 2\pi) =e^{2\pi \delta \xi} R_{2\pi \delta\rho_0}\gamma(\theta).
\]

While not every surface constructed by Theorem \ref{RegularTheorem} exhibits the discrete dilation property described in Theorem \ref{TheoremForTheCommonMan}, when $\{\e_1, \e_2, \e_3\}$ corresponds to the Frenet frame on $\gamma$, the immersion $G$ in this case satisfies
\[
G(s, \theta + 2\pi) = e^{2\pi \xi\delta} R_{2\pi \delta \rho_0} G(s,\theta).
\]Since we solve the problem for functions $u$ with $2\pi$ periodicity in $\theta$, the normal graphs over surfaces satisfying the previous property will exhibits the isometry:
 \[
 G_w(s, \theta + 2\pi) = e^{2\pi \delta\xi }R_{2\pi \delta \rho_0}G_w(s,\theta).
 \]

\subsection{Structure of the article}
In Section  \ref{Preliminaries}, we introduce various preliminary notions that will be needed throughout the article. In Subsection \ref{NotationForNorms}, we introduce notation for norms and function spaces, and in Subsection \ref{EstimatingHomogeneousQuantities} we formalize the process of obtaining weighted $C^{k, \alpha}$ estimate for a broad class of homogeneous quantities. This formalization is extremely useful and helps streamline the presentation since almost every geometric quantity we estimate--the mean curvature, e.g.--is such a quantity. The discussion generalizes a similar one in \cite{BK}. 

In Section \ref{LogarithmicSpirals}, we define a three parameter family of embedded self-similar curves which can be thought of as solutions to the problem of specifying curves with initial curvature, torsion, and rate of exponential parametrization. In the case of vanishing torsion, the traces  of these curves constitute the family of \emph{logarithmic spirals}, and in accordance with this terminology we refer to the whole family as \emph{logarithmic spirals}.  We then show that the curves  satisfying \eqref{frame_derivative} are logarithmic spirals with curvature and torsion proportional to $\delta e^{- \delta \xi  z}$. From this we conclude that  the map $M$ in (\ref{Mdef}) is a diffeomorphism from a tubular neighborhood of the $z$ axis, with radius proportional to $\delta^{-1}$, onto its image, which we call a \emph{logarithmic cone}. Logarithmic cones, like the spirals on which they are modeled, have a one-dimensional global isometry group, and in the case that $|\RRR| = 0$ (so that $\gamma$ is a straight line) agree up to a similarity transformation with a solid positive cone about the $z$-axis.

In Section \ref{GeometricQuantitiesOnG} we estimate local perturbations of geometric quantities on $G$, where here we heavily exploit the group action on $\gamma$.  The maps $M$ in \eqref{Mdef} are locally, after modding out by dilations and rigid motions,  the identity map plus an exponentially growing perturbation term. In this normalization the perturbation term is periodic, which allows us to consider the problem on one fundamental domain of the helicoid. Roughly speaking, if $u$ is $2\pi$ periodic in $\theta$ and $H[u]$ is the mean curvature  of a normal graph over $G$ by the function $u$, then the function $Q[u] : = e^{\delta \xi  \theta}\cosh^2(s) H[e^{\delta \xi \theta} u]$ is also a $2 \pi$ periodic function in $\theta$. In this way, the analysis descends to cylinders of finite (but not uniformly bounded) length.

We solve the linear problem in Section \ref{TheLinearProblem}, where the main result appears in Proposition \ref{FlatTotalInverse}. This result proves the invertibility of the stability operator $\mathcal{L}_F$ on flat cylinders of length $\arccosh(\ell)$, for appropriately modified inhomogeneous terms. The strategy is motivated by the work of Kapouleas in various gluing problems \cite{KapAnn,KapWente,KapMin}. A novel feature of this work is the decomposition of the inhomogeneous term $E$ into a $\theta$-independent function and a function $\mathring E$, which has ``zero average on meridian circles'' (see Definition \ref{ZeroMeridianAverageSpaces}). The $\theta$-independent function can be inverted via direct integration, so the main work is to invert $\mathring E$. We first prove, in Proposition \ref{flat_inverse}, that the operator has a bounded inverse in exponentially weighted H\"older spaces  $\mathcal X^k$ supported on $\Lambda$, 
as long as $\mathring E$ is ``orthogonal'' to a three dimensional kernel 
 which is spanned by the translational Killing fields. The weighting allows for a rate of exponential growth of power $3/4$ though, in fact, anything growth rate less than $1$ also works. Here orthogonal  means $L^2$-orthogonal with respect to the pull-back of $F$ to the sphere under the Gauss map.  The Gauss map for minimal surfaces is conformal with conformal factor $\frac{|A|^2}{2}$ and in the case of the helicoid descends to the quotient as a conformal diffeomorphism onto  the sphere minus the north and south pole (corresponding to the asymptotic normal along both ends of the helicoid). In this way, the study of the stability operator $\mathcal{L}_F$ on the helicoid in $\theta$-periodic function spaces can be understood as the study of $\Delta_{\mathbb S^2} + 2$ on the sphere.  
 
 To modify $\mathring E$, we define functions  $u_x$ and $u_y$, which we can control geometrically, and whose graphs over $F$ can be used to prescribe the kernel content of the mean curvature. In this way, we orthogonalize the error terms for which we are solving and are able to apply Proposition \ref{flat_inverse} to the general setting. In articles by Kapouleas and his many coauthors (see \cite{KapBreiner,HaskKap,KapYang} among others), the functions $\mathcal L_F u_x, \mathcal L_F u_y$ are referred to as the ``substitute kernel''. Notice that while the space of translational Killing fields is three dimensional, the space of modifications is only two dimensional. The $\theta$-independence of the third translation function and the averaging property of $\mathring E$ immediately guarantee the projection of $\mathring E$ in this direction is always zero. The functions $u_x, u_y$ grow exponentially at a rate proportional to $\cosh{(s)}$, which is faster than the allowable growth rate in the space $\mathcal{X}^k$ ($\cosh^{3/4}(s)$), so that a bounded inverse does not extend to the full H\"older space $\mathcal{X}^k$. 

In Section \ref{FindingExactSolutions}, we define a map from an appropriate Banach space and show the estimates are sufficient to invoke Schauder's fixed point theorem. Every point in the Banach space corresponds to a triple $(v,b_x,b_y)$ where $v \in \mathcal X^2$ and $b_x, b_y \in \Real$. For a fixed point, setting $f =  e^{\delta \xi  \theta}\left(v + b_x u_x + b_y u_y \right)$, in Section \ref{MainTheorem} we demonstrate that $G_f$ is an embedded minimal disk. We define the Banach space so that
\begin{align}\notag
|b_x|+ | b_y| \leq \zeta \delta |\RRR| \ell^{1/4}, \quad v \approx \zeta \delta |\RRR|  \ell^{1/4}\cosh^{3/4} (s)
\end{align}
where $\zeta$ is a fixed constant which we must choose sufficiently large. Since $|s| \leq \arccosh (\ell/4)$ on $\Lambda$, $v$ is then bounded by  a constant uniformly proportional to $\zeta \delta |\RRR| \ell$, which we can keep arbitrarily small by taking $\delta$ small.  As $u_x, u_y$ grow exponentially, the estimates for these functions are of a weaker form:
\begin{align}\notag
\sup_\Lambda |b_x u_x| \approx \zeta \delta |\RRR| \ell^{1/4}\cosh(s) \leq \zeta \delta |\RRR| \ell^{5/4}.
\end{align}This bound and the previous give some indication of the constraints that fix an upper bound $\delta$.

With these estimates in hand, we use properties of the map $M$ and the helicoid embedding $F$ to prove embeddedness in Section \ref{MainTheorem}. We first prove that graphs over $F$ by $\left(v + b_x u_x + b_y u_y\right) \nu_F + \mathbf x$ are small on $\Lambda$, if the norms on $v, b_x, b_y$ are sufficiently small and $\mathbf x$ has small $C^1$ norm. The smallness depends only on properties of the helicoid. We then use properties of $M$ to prove that $G_f$ can be locally described as such a graph over $F$. The diffeomorphism property for $M$ then implies embeddedness.
 
\section{Preliminaries} \label{Preliminaries}
\subsection{Notation and conventions}\label{NotationForNorms}
Throughout this paper we make extensive use of cut-off functions, and we adopt the following notation:  Let $\psi_0:\Real \to [0,1]$ be a smooth function such that
\begin{enumerate}
 \item $\psi_0$ is non-decreasing
\item $\psi_0 \equiv 1$ on $[1,\infty)$ and $\psi_0 \equiv 0$ on $(-\infty, -1]$
\item $\psi_0-1/2$ is an odd function.
\end{enumerate}
For $a,b \in \Real$ with $a \neq b$, let $\psi[a,b]:\Real \to [0,1]$ be defined by $\psi[a,b]=\psi_0 \circ L_{a,b}$ where $L_{a,b}:\Real \to \Real$ is a linear function with $L(a)=-3, L(b)=3$.
Then $\psi[a,b]$ has the following properties:
\begin{enumerate}
 \item $\psi[a,b]$ is weakly monotone.
\item $\psi[a,b]=1$ on a neighborhood of $b$ and $\psi[a,b]=0$ on a neighborhood of $a$.
\item $\psi[a,b]+\psi[b,a]=1$ on $\Real$.
\end{enumerate}

\begin{definition}
Given a function $u \in C^{j, \alpha} (D)$, where $D \subset \Real^m$, the   $(j, \alpha)$ \emph{localized H\"older norm} is given by
\begin{align*}
\| u\|_{j, \alpha} (p) : = \| u : C^{j, \alpha} (D \cap B_1 (p))\|.
\end{align*}
We let $C^{j, \alpha}_{loc} (D)$ denote the space of functions for which $ \|- \|_{j, \alpha}$ is  pointwise finite. 
\end{definition}

\begin{definition}
Given  a positive function $f: D \rightarrow \Real$,  we let the space $C^{j, \alpha} (D, f)$ be the space of functions for which the \emph{weighted norm} $\| -: C^{j, \alpha} (D, f)\|$  is finite, where we take
\begin{align*}
\| u : C^{j, \alpha} (D, f) \| : = \sup_{p \in D} f(p)^{-1} \| u\|_{j, \alpha} (p)
\end{align*}
\end{definition}

\begin{definition}
Let $\mathcal{X}$ and $\mathcal{Y}$ be two Banach spaces with norms $\| - : \mathcal{X}\|$ and $\| - : \mathcal{Y}\|$, respectively. Then $\mathcal{X} \cap \mathcal{Y}$ is naturally a Banach space with norm $\| -:  \mathcal{X} \cap \mathcal{Y}\|$ given by
\begin{align}\notag
\| f : \mathcal{X} \cap \mathcal{Y} \| = \| f: \mathcal{X}\| + \| f : \mathcal{Y}\|. 
\end{align}
\end{definition}
Let $\mathcal{X}$ be a Banach space with norm $\| - : \mathcal{X}\|$ and suppose $S \subset \mathcal X$. For convenience, throughout the paper we will sometimes write $\| - : S\|$, where for any $f \in S$ we simply let
\[
\|f:S\|:= \|f: \mathcal X\|.
\]

\subsection{Estimating  homogeneous quantitites} \label{EstimatingHomogeneousQuantities}
Let $E$ be the Euclidean space  $E : = E^{(1)} \times E^{(2)} =   \Real^{3 \times 2} \times \Real^{3 \times 4}$. We denote points of $E$ by $ \underline{\nabla}  = (\nabla, \nabla^2)$, where
\begin{align*}
\nabla = (\nabla_1, \nabla_2) \quad \nabla^2 = (\nabla^2_{1  1}, \nabla^2_{2  2}, \nabla^2_{1  2}, \nabla^2_{2  1}).
\end{align*}
We then consider functions $\Phi(\underline{\nabla})$ on $E$ with the property 
\begin{align*}
\Phi (c \underline{\nabla}) = c^d \Phi (\underline{\nabla})
\end{align*}
for real numbers $c$ and $d$. We call such a function a \emph{homogeneous function of degree $d$}.   It is straightforward to verify that a homogeneous degree $d$ function has the property that its $j^{th}$ derivative $D^{(j)} \Phi $ is homogeneous degree $d - j$.
\begin{remark}
We will assume throughout this section that all functions $\Phi:E \to \Real$ refer to smooth functions which are homogeneous of degree $d$. We also presume such $\Phi$ are uniformly bounded in any $C^k$ on compact subsets of the space
\begin{align*}
E_0 : = \{\underline{\nabla} \in E : \mathfrak{a} (\nabla) \neq 0  \}.
\end{align*}
\end{remark}Notice $E$ is just a Euclidean space so for any $V \in E$, we make the identification $T_VE =E$.
We extend this for each $k \in \mathbb Z^+$ and observe that $D^{(k)}\Phi(V):E^k \to \Real$. For clarity we provide the following definition.
\begin{definition}
Let $k \in \mathbb Z^+$, $V, W_1, \dots W_k \in E$. Then
\begin{equation}\notag
\left.D^{(k)}\Phi\right|_V(W_1 \otimes \dots \otimes W_k):=D^{(k)}\Phi(V)(W_1 \otimes \dots \otimes W_k).
\end{equation}For brevity, we denote the $k$-th tensor product of $W$ with itself by
\[
\otimes^{(k)} W:= W\otimes \dots \otimes W.
\]
\end{definition}
\begin{definition}
 Given an immersion $\phi : D \subset \Real^2 \rightarrow \Real^3$, we set $\underline{\nabla} [\phi] :  = (\nabla \phi, \nabla^2 \phi)$. A \emph{homogeneous quantity of degree $d$} on $\phi$ is then a function of the form $ \Phi[\phi] : = \Phi (\underline{\nabla}[\phi])$ for some homogeneous function $\Phi$ on $E$.
 \end{definition}
   Examples of such functions are the mean curvature, unit normal, components of the metric and its dual, the Christoffel symbols and the coefficients of the Laplace operator for $\phi$ in the domain $D$.

We want to estimate the linear and higher order changes of homogenous quantities along $\phi$ due  to addition of small vector fields. To do this concisely,  we refer to a map $\underline{\nabla} (s, \theta): D \subset \Real^2 \rightarrow E$ as an \emph{immersion} if the quantity
\begin{align} \label{fraka_def}
\mathfrak{a} (\underline{\nabla}) = \mathfrak{a} (\nabla) : = 2 \sqrt{\det{\nabla^T\nabla}}/|\nabla|^2 
\end{align}
is everywhere non-zero, and otherwise we refer to it simply as a \emph{vector field}. 
\begin{lemma}
For a map $\underline \nabla:D \to E$, $|\mathfrak{a}(\nabla)| \leq 1$ with equality if and only if $|\nabla_1| = |\nabla_2|$ and $\nabla_1 \cdot \nabla_2 = 0$. In particular, $\mathfrak{a} (\nabla \phi) = 1$ if and only if $\phi$ is a conformal immersion.
\end{lemma}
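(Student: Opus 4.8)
The plan is to unwind the definition \eqref{fraka_def} of $\mathfrak a$ in terms of the two columns $\nabla_1,\nabla_2\in\Real^3$ of $\nabla$ and reduce everything to the Cauchy--Schwarz and arithmetic--geometric mean inequalities. First I would note that $\nabla^T\nabla$ is the $2\times2$ Gram matrix of $\nabla_1,\nabla_2$, so that
\[
\det(\nabla^T\nabla)=|\nabla_1|^2|\nabla_2|^2-(\nabla_1\cdot\nabla_2)^2,\qquad |\nabla|^2=|\nabla_1|^2+|\nabla_2|^2 .
\]
By Cauchy--Schwarz the determinant is non-negative, so $\mathfrak a(\nabla)\ge 0$ and hence $|\mathfrak a(\nabla)|=\mathfrak a(\nabla)$; moreover
\[
\det(\nabla^T\nabla)\le |\nabla_1|^2|\nabla_2|^2,
\]
with equality precisely when $\nabla_1\cdot\nabla_2=0$.

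Combining this with the elementary estimate $2|\nabla_1||\nabla_2|\le |\nabla_1|^2+|\nabla_2|^2$, whose equality case is $|\nabla_1|=|\nabla_2|$, gives
\[
\mathfrak a(\nabla)=\frac{2\sqrt{\det(\nabla^T\nabla)}}{|\nabla|^2}\le \frac{2|\nabla_1||\nabla_2|}{|\nabla_1|^2+|\nabla_2|^2}\le 1 .
\]
For equality to hold in the composite chain both intermediate inequalities must be equalities, which forces simultaneously $\nabla_1\cdot\nabla_2=0$ and $|\nabla_1|=|\nabla_2|$; conversely, if these two conditions hold, each inequality becomes an equality and $\mathfrak a(\nabla)=1$. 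This proves the first assertion, the implicit nondegeneracy $\nabla\neq 0$ (so the denominator does not vanish) being built into the hypothesis that $\underline\nabla$, or $\phi$, is an immersion.

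For the last statement I would simply specialize to $\nabla\phi=(\partial_s\phi,\partial_\theta\phi)$: the equality conditions $|\partial_s\phi|=|\partial_\theta\phi|$ and $\partial_s\phi\cdot\partial_\theta\phi=0$ (with the common value nonzero) say exactly that the first fundamental form of $\phi$ is a positive multiple of the Euclidean metric on $D$, i.e.\ that $\phi$ is a conformal immersion. There is no real obstacle here; the only points requiring a moment's care are the sign of the radicand — handled by Cauchy--Schwarz forcing $\det(\nabla^T\nabla)\ge 0$ — and the observation that equality in the final bound propagates back through \emph{both} inequalities, which is what pins down the stated characterization.
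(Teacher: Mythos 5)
Your proposal is correct. It differs from the paper's argument mainly in packaging: the paper exploits that $\mathfrak a$ is homogeneous of degree $0$ to normalize $|\nabla_1|=1$, $|\nabla_2|=r\in[0,1]$, writes $\mathfrak a^2(\nabla)=4(1-\cos^2\theta)\,r^2/(1+r^2)^2$ with $\cos\theta$ the angle between the columns, and observes that the $r$-factor is maximized at $r=1$, whereas you keep the general Gram-determinant identity and chain Cauchy--Schwarz with the AM--GM bound $2|\nabla_1||\nabla_2|\le|\nabla_1|^2+|\nabla_2|^2$. The two proofs rest on the same two facts (the inner-product term can only decrease the determinant; $4r^2/(1+r^2)^2\le 1$ with equality at $r=1$), but your version tracks the equality cases explicitly through both inequalities, which makes the ``if and only if'' characterization and the conformality statement cleaner than the paper's rather terse maximization remark; the paper's normalization, on the other hand, reduces everything to a one-variable function and avoids naming the inequalities. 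Your remark that the degenerate case $\nabla=0$ (where $\mathfrak a$ is undefined and the equality conditions hold vacuously) is excluded by the immersion hypothesis is a point the paper glosses over as well, so there is no gap on either side.
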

\begin{proof}
Since $\mathfrak{a}$ is homogeneous degree $0$ it suffices to consider the case that $|\nabla_1|  = 1$, $|\nabla_2| : = r$ for $r \in [0,1]$. We can then write
\begin{align*}
\mathfrak{a}^2(\nabla)  & = 4\frac{ |\nabla_1|^2 |\nabla_2|^2 -( \nabla_1 \cdot \nabla_2)^2}{(|\nabla_1|^2 + |\nabla_2|^2)^2}  \\ 
& = 4 \frac{r^2 - \cos^2(\theta) r^2}{(1 + r^2)^2} = 4 (1 - \cos^2(\theta)) \frac{r^2}{(1 + r^2)^2}
\end{align*}
For each $\theta$, the right hand side achieves a unique maximum at $r = 1$ with value $(1 - \cos^2 (\theta))$, which gives the claim.
\end{proof}
  
\begin{definition}\label{TRdef}
 Given an immersion $\underline{\nabla}$ and a vector field $\mathcal{E}$, we  set 
\begin{align} \label{TR1}
R_{\Phi, \mathcal{E}}^{(k)} (\underline{\nabla}) : =  \int_0^{1} \frac{(1 - \sigma)^k}{k!} \left. D \Phi^{(k + 1)} \right|_{\underline{\nabla}(\sigma)} (\otimes^{(k + 1)} \mathcal{E}) d \sigma
\end{align}
where $\otimes^{(k)} \mathcal{E}$ denotes the $k$-fold tensor product of $\mathcal{E}$ with itself and where $\underline{\nabla} (\sigma) : = \underline{\nabla} + \sigma \mathcal{E}$.

When $\underline{\nabla}$ and $\mathcal{E}$ are of the form $\underline{\nabla}  = \underline{\nabla} \phi$ and $\mathcal{E}  = \underline{\nabla} V$ we write
 \begin{align} \notag
 R_{\Phi , V}^{(k)} (\phi) : = R_{\Phi , \mathcal{E} }^{(k)} (\underline{\nabla}).
 \end{align}
 \end{definition}
 
 Note that $R_{\Phi , \mathcal{E}} (\underline{\nabla}) $ is simply the order $k$ Taylor remainder so that:
 \begin{proposition}
 We have 
 \begin{align}\label{TR2}
 \Phi (\underline{\nabla} + \mathcal{E}) - \Phi(\underline{\nabla}) - \left. D\Phi\right|_{\underline{\nabla}} (\mathcal{E}) - \ldots - \frac{1}{k !}\left. D^{(k )} \Phi \right|_{\underline{\nabla}} \left(\otimes^{(k)}\mathcal{E} \right) = R^{(k)}_{\Phi, \mathcal{E}} (\underline{\nabla})
 \end{align}
 \end{proposition}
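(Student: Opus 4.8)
The plan is to reduce the identity to the classical one-variable Taylor theorem with integral remainder, transported through the affine substitution appearing in Definition \ref{TRdef}. First I would fix the immersion $\underline{\nabla}$ and the vector field $\mathcal{E}$ and introduce the scalar function $g : [0,1] \to \Real$ defined by $g(\sigma) := \Phi(\underline{\nabla}(\sigma))$, where $\underline{\nabla}(\sigma) := \underline{\nabla} + \sigma \mathcal{E}$ exactly as in Definition \ref{TRdef}. Since $\underline{\nabla}$ is an immersion, $\mathfrak a(\nabla) \neq 0$, and (taking $\mathcal{E}$ small enough that the property is preserved along the segment) the compact set $\{\underline{\nabla}(\sigma) : \sigma \in [0,1]\}$ is contained in $E_0 = \{\mathfrak a(\nabla) \neq 0\}$, on which $\Phi$ is smooth by the standing assumptions of the section. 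Hence $g \in C^\infty([0,1])$, in particular $g \in C^{k+1}([0,1])$.

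Next I would compute the derivatives of $g$ via the chain rule. Because $\sigma \mapsto \underline{\nabla}(\sigma)$ is an affine curve in the Euclidean space $E$ with constant velocity $\mathcal{E}$, and using the identification $T_V E = E$ recorded before Definition \ref{TRdef}, one gets $g'(\sigma) = D\Phi|_{\underline{\nabla}(\sigma)}(\mathcal{E})$; differentiating again, the velocity contributes another factor of the constant vector $\mathcal{E}$, so a straightforward induction on $j$ gives
\[
g^{(j)}(\sigma) = D^{(j)}\Phi\big|_{\underline{\nabla}(\sigma)}\big(\otimes^{(j)}\mathcal{E}\big), \qquad 0 \le j \le k+1 ,
\]
where the symmetry of the higher total derivatives of $\Phi$ lets us write the argument as the $j$-fold tensor product $\otimes^{(j)}\mathcal{E}$.

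Then I would apply the exact Taylor expansion of $g$ about $\sigma = 0$ evaluated at $\sigma = 1$ in its integral-remainder form, valid since $g \in C^{k+1}([0,1])$ (it follows from the fundamental theorem of calculus together with $k$ successive integrations by parts):
\[
g(1) = \sum_{j=0}^{k} \frac{1}{j!}\, g^{(j)}(0) + \int_0^1 \frac{(1-\sigma)^k}{k!}\, g^{(k+1)}(\sigma)\, d\sigma .
\]
Substituting the formula for $g^{(j)}$: the left side is $\Phi(\underline{\nabla} + \mathcal{E})$, the $j=0$ term is $\Phi(\underline{\nabla})$, the terms $1 \le j \le k$ are $\tfrac{1}{j!} D^{(j)}\Phi|_{\underline{\nabla}}(\otimes^{(j)}\mathcal{E})$, and the integral is precisely $R^{(k)}_{\Phi, \mathcal{E}}(\underline{\nabla})$ of Definition \ref{TRdef}. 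Rearranging yields \eqref{TR2}.

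As for difficulty: there is essentially no obstacle, since the statement is just the Taylor remainder identity pulled back along $\sigma \mapsto \underline{\nabla} + \sigma \mathcal{E}$. The only points deserving a word of care are (i) verifying that the segment stays inside the open set $E_0$ where $\Phi$ is smooth, which is where the hypothesis that $\underline{\nabla}$ is an immersion enters, and (ii) the multilinear bookkeeping in the chain rule, so that each $g^{(j)}(0)$ is correctly identified with $D^{(j)}\Phi|_{\underline{\nabla}}(\otimes^{(j)}\mathcal{E})$.
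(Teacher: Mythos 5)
Your proposal is correct and follows essentially the same route as the paper: define the scalar function $\sigma \mapsto \Phi(\underline{\nabla}+\sigma\mathcal{E})$, compute its derivatives by the chain rule as $D^{(j)}\Phi|_{\underline{\nabla}(\sigma)}(\otimes^{(j)}\mathcal{E})$, and apply the one-variable Taylor theorem with integral remainder. Your added remark about the segment remaining in $E_0$ is a reasonable precision the paper leaves implicit, but the argument is the same.
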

 \begin{proof}
 Set  $f (\sigma) : = \Phi(\underline{\nabla} (\sigma))$. Recall the integral form of the Taylor remainder theorem implies
 \begin{align*}
 f(1) - f(0) - \ldots - \frac{1}{k!} f^{(k)}(0) = \int_0^1 \frac{(1 - \sigma)^k}{k!} f^{(k + 1)} (\sigma) d \sigma.  
 \end{align*}
 The claim then follows by computing explicitly the derivatives of $f$ in terms of $\Phi$.
 \end{proof}
Since we are interested in immersions, we provide a quantitative statement that well controlled variations of immersions remain immersions.
\begin{proposition} \label{still_regular}
Let $\underline \nabla$ and $\mathcal{E}$ be points in $E$, with $\underline \nabla \in E_0$, satisfying
\begin{align*}
|\mathcal{E}| < \epsilon |\mathfrak{a} (\nabla) | |\nabla|.
\end{align*}
Then for $\epsilon$  sufficiently small, independent of $\nabla$ and $\mathcal{E}$,
\begin{align*}
\left|\mathfrak{a} (\nabla + \mathcal{E}) - \mathfrak{a} (\nabla) \right| < C |\mathcal{E}|/ |\nabla|.
\end{align*}
\end{proposition}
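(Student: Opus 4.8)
The plan is to reduce to a single scale using the degree-zero homogeneity of $\mathfrak a$ and then run an elementary Lipschitz estimate on a bounded region; the only point that takes an idea rather than bookkeeping is rewriting $\sqrt{\det\nabla^T\nabla}$ so that one sees it is Lipschitz even across the zero set of $\mathfrak a$. Since $\mathfrak a$ is homogeneous of degree $0$ and, by \eqref{fraka_def}, reads only the first-order slot $\nabla\in\Real^{3\times 2}$ of $\underline\nabla$, I would first replace $\underline\nabla$ by $\underline\nabla/|\nabla|$ and $\mathcal E$ by $\mathcal E/|\nabla|$ (which changes neither the hypothesis nor the conclusion) and then discard the $E^{(2)}$-component of $\mathcal E$ (which only decreases $|\mathcal E|$), reducing to the case $|\nabla|=1$, $\mathcal E\in E^{(1)}$. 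By the preceding Lemma $\mathfrak a(\nabla)\le 1$, so the hypothesis becomes simply $|\mathcal E|<\epsilon$, and it remains to prove $|\mathfrak a(\nabla+\mathcal E)-\mathfrak a(\nabla)|\le C|\mathcal E|$.

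The key step: regarding $\nabla_1,\nabla_2\in\Real^3$ as the columns of $\nabla$, Lagrange's identity (equivalently Cauchy--Binet) gives $\det(\nabla^T\nabla)=|\nabla_1|^2|\nabla_2|^2-(\nabla_1\cdot\nabla_2)^2=|N(\nabla)|^2$, where $N(\nabla):=\nabla_1\times\nabla_2$ is a fixed homogeneous degree-two polynomial map $\Real^{3\times 2}\to\Real^3$, so that $\mathfrak a(\nabla)=2|N(\nabla)|/|\nabla|^2$. This is the point at which one must resist differentiating $\mathfrak a$ directly: $t\mapsto\sqrt t$ is not $C^1$ at $0$, and $\mathfrak a$ need not be $C^1$ across $\{\mathfrak a=0\}$; but $|N(\cdot)|$, being the Euclidean norm of a polynomial map, is Lipschitz on bounded sets, with $|N(\nabla+\mathcal E)-N(\nabla)|\le C_0|\mathcal E|$ for an absolute $C_0$ as soon as $|\nabla|\le 2$ and $|\mathcal E|\le 1$. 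Taking $\epsilon<1/2$ we have $|\nabla+\mathcal E|\in[1/2,3/2]$, hence $|\nabla+\mathcal E|^{-2}\le 4$ and $\bigl|\,|\nabla+\mathcal E|^2-1\,\bigr|=\bigl|\,2\nabla\cdot\mathcal E+|\mathcal E|^2\,\bigr|\le 3|\mathcal E|$, while $|N(\nabla)|=\tfrac12\mathfrak a(\nabla)|\nabla|^2\le\tfrac12$. I would then split
\[
\tfrac12\bigl(\mathfrak a(\nabla+\mathcal E)-\mathfrak a(\nabla)\bigr)=\frac{|N(\nabla+\mathcal E)|-|N(\nabla)|}{|\nabla+\mathcal E|^2}+|N(\nabla)|\Bigl(\tfrac{1}{|\nabla+\mathcal E|^2}-1\Bigr),
\]
bound the first summand in absolute value by $4C_0|\mathcal E|$ (Lipschitz bound for $N$, denominator $\ge 1/4$) and the second by $\tfrac12\cdot 4\cdot 3\,|\mathcal E|$, and conclude $|\mathfrak a(\nabla+\mathcal E)-\mathfrak a(\nabla)|\le(8C_0+12)|\mathcal E|$. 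Undoing the rescaling yields the claim with $C=8C_0+12$, an absolute constant.

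I do not expect a genuine obstacle beyond recognizing $\sqrt{\det\nabla^T\nabla}=|\nabla_1\times\nabla_2|$; after that the estimate is routine on a bounded region. One remark worth recording: only the weaker smallness $|\mathcal E|<\epsilon|\nabla|$ is actually used above, and the extra factor $\mathfrak a(\nabla)$ in the stated hypothesis is precisely what upgrades this inequality to the ``remains an immersion'' statement motivating the proposition, since then $\mathfrak a(\nabla+\mathcal E)\ge\mathfrak a(\nabla)-C|\mathcal E|/|\nabla|\ge(1-C\epsilon)\mathfrak a(\nabla)>0$ for $\epsilon$ small, i.e. $\nabla+\mathcal E\in E_0$.
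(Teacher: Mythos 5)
Your proof is correct, but it takes a genuinely different route from the paper's. The paper runs this through its general Taylor-remainder framework: using the hypothesis $|\mathcal{E}| < \epsilon\,\mathfrak{a}(\nabla)|\nabla|$ it first shows each column satisfies $|\mathcal{E}_i| < \tfrac12|\nabla_i|$, so the segment $\nabla(\sigma)=\nabla+\sigma\mathcal{E}$ keeps columns comparable to those of $\nabla$, then bounds $|\nabla|\,|D\mathfrak{a}|_{\nabla(\sigma)}$ uniformly and invokes the integral remainder identity \eqref{TR2} with $k=0$; this has the advantage of fitting the same template as Proposition \ref{HQEstimates}, which is how $\mathfrak{a}$-type estimates are used throughout the paper. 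You instead rewrite $\sqrt{\det\nabla^T\nabla}=|\nabla_1\times\nabla_2|$ in \eqref{fraka_def}, normalize by degree-zero homogeneity to $|\nabla|=1$, and run a direct Lipschitz estimate via the reverse triangle inequality on $|N(\cdot)|$. What this buys is twofold: it never differentiates $\mathfrak{a}$, so there is no issue with the non-smoothness of $t\mapsto\sqrt{t}$ at $t=0$ (i.e.\ no need to keep the segment away from $\{\mathfrak{a}=0\}$, which in the paper's argument is what the $\mathfrak{a}(\nabla)$ factor in the hypothesis is implicitly doing), and it establishes the conclusion under the weaker hypothesis $|\mathcal{E}|<\epsilon|\nabla|$, with the stated hypothesis then serving, as you correctly note, only to guarantee $\mathfrak{a}(\nabla+\mathcal{E})\geq(1-C\epsilon)\mathfrak{a}(\nabla)>0$, i.e.\ that $\nabla+\mathcal{E}$ remains an immersion. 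Your reduction discarding the $E^{(2)}$-component and the arithmetic of the two-term splitting both check out, and the resulting constant is absolute, as required.
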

\begin{proof}
The definition of $\mathfrak{a}$ implies there exists $C>0$ independent of $\underline \nabla$ such that 
\begin{align*}
C\mathfrak{a} (\nabla) \leq\frac{|\nabla_1|}{|\nabla_2|} \leq C \mathfrak{a}^{-1} (\nabla).
\end{align*}
This then gives 
\begin{align} \notag
(1 + C  \mathfrak{a}^{2} )|\nabla_2|^2 \leq |\nabla|^2 \leq (1 +  C \mathfrak{a}^{-2})|\nabla_2|^2 \\ \notag
(1 +C  \mathfrak{a}^{2} )|\nabla_1|^2 \leq |\nabla|^2 \leq (1 +  C \mathfrak{a}^{-2})|\nabla_1|^2, 
\end{align}
so that for $0<\epsilon<(4\mathfrak a^2+4C)^{-1/2}$,
\begin{align}\notag
|\mathcal{E}_1|^2 & < \epsilon^2 \mathfrak{a}^2(1  + C\mathfrak{a}^{-2}) |\nabla_1|^2 < \frac{1}{4} |\nabla_1|^2 \\ \notag
|\mathcal{E}_2|^2 & < \frac{1}{4} |\nabla_2|^2.
\end{align}
Set ${\nabla} (\sigma) : ={\nabla} + \sigma \mathcal{E}$. Then  
\begin{align*}
\frac{1}{2}|\nabla_2|  \leq |\nabla_2 (\sigma)| \leq \frac{3}{2}|\nabla_2| \\ \notag
\frac{1}{2}|\nabla_1|  \leq |\nabla_1 (\sigma)| \leq \frac{3}{2}|\nabla_1|.
\end{align*}

It is then straightforward to check that $|\nabla| \left. D \mathfrak{a} \right|_{\nabla (\sigma)}$ is uniformly bounded for $\sigma \in [0, 1]$, so that using \eqref{TR1}, \eqref{TR2}
\begin{align*}
\left|\mathfrak{a} (\nabla + \mathcal{E}) - \mathfrak{a} (\nabla) \right| = \left| R^{(0)}_{\mathfrak{a}, \mathcal{E}}(\nabla)\right| < C |\mathcal{E}|/|\nabla|.
\end{align*}
\end{proof}
Using the previous estimates and the scaling properties of homogeneous functions, we record here an estimate we use with great frequency. In particular, this estimate allows us to control appropriately weighted H\"older estimates on the remainder terms of a homogeneous function by H\"older estimates on the variation field $\mathcal E$.
\begin{proposition} \label{HQEstimates} There exists $\tilde \epsilon>0$ such that if $\underline{\nabla}: D \rightarrow E$ is an immersion and $\mathcal{E}: D \rightarrow E$ is a vector field satisfying
\begin{align} \notag
\|\mathcal{E}: C^{j, \alpha}(D,\mathfrak{a} (\nabla) |\nabla|)\| \leq C(j, \alpha)\tilde \epsilon , \quad \text{ and }\quad \ell_{j, \alpha} (\nabla) : =  \| \nabla: C^{j, \alpha}(D, |\nabla|)\|  < \infty,
\end{align}
then
\begin{align} \notag
\left\| R_{\Phi , \mathcal{E}}^{(k)} (\underline{\nabla}): C^{j, \alpha} (D, |\nabla|^d) \right\| \leq C(\Phi,  \ell_{j, \alpha}, \mathfrak{a},k )\left\| \mathcal{E} : C^{j, \alpha} (D, |\nabla| ) \right\|^{k+1}.
\end{align}
\end{proposition}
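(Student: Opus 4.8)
The plan is to reduce the claimed weighted $C^{j,\alpha}$ bound to a single composition estimate on one unit ball, after rescaling so that $|\nabla|$ has size one there --- the usual device for weighted Schauder-type estimates. Throughout we shrink $\tilde\epsilon$ finitely many times, always depending only on the quantities named in the statement. First I would fix $p\in D$, abbreviate $V=\underline\nabla(p)$, $W=\mathcal E(p)$, and $\underline\nabla(\sigma)=\underline\nabla+\sigma\mathcal E$, and choose $\tilde\epsilon$ small enough that the first hypothesis forces $|\mathcal E|<\epsilon\,\mathfrak a(\nabla)|\nabla|$ pointwise on all of $D$, with $\epsilon$ as in Proposition \ref{still_regular}. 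The computations in the proof of that proposition then give, for every $q\in D$ and every $\sigma\in[0,1]$,
\[
\tfrac12|\nabla(q)|\le |\nabla(q)+\sigma\mathcal E(q)|\le \tfrac32|\nabla(q)|,
\qquad
\mathfrak a\big(\nabla(q)+\sigma\mathcal E(q)\big)\ge \tfrac12\,\mathfrak a(\nabla(q)),
\]
the second inequality because $|\mathfrak a(\nabla+\sigma\mathcal E)-\mathfrak a(\nabla)|<C\epsilon\,\mathfrak a(\nabla)$. Since the finiteness of $\ell_{j,\alpha}(\nabla)$ also bounds $\nabla^2$ by $C\ell_{j,\alpha}|\nabla|$, the full vector $\underline\nabla(q)+\sigma\mathcal E(q)$ has Euclidean norm comparable to $|\nabla(q)|$; normalizing it to unit length places it in a compact subset $K\subset E_0$ depending only on $\ell_{j,\alpha}$ and on a positive lower bound for $\mathfrak a(\nabla)$ on $D$. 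As $D^{(k+1)}\Phi$ is homogeneous of degree $d-k-1$ and, by the standing assumption on $\Phi$, bounded together with all of its derivatives on $K$, we obtain both the crude bound $\big| D^{(k+1)}\Phi|_{\underline\nabla(\sigma)(q)}\big|\le C|\nabla(q)|^{d-k-1}$ and, more usefully, $C^{j+1}$-control of the map $(V,W)\mapsto D^{(k+1)}\Phi|_V(\otimes^{(k+1)}W)$ on a fixed neighborhood of $K\times\{|W|\le 1\}$ in $E_0\times E$.

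Next I would localize and rescale. Put $\Omega=B_1(p)\cap D$. Finiteness of $\ell_{j,\alpha}(\nabla)$ makes $\log|\nabla|$ Lipschitz (resp. H\"older) on $\Omega$ with constant controlled by $\ell_{j,\alpha}$, so $|\nabla|$ is comparable to $|\nabla(p)|$ on $\Omega$ with constants depending only on $\ell_{j,\alpha}$; hence the three weights $|\nabla|$, $\mathfrak a(\nabla)|\nabla|$ and $|\nabla|^d$ are, on $\Omega$, comparable to their values at $p$. Since $\Phi$, and therefore $R^{(k)}_{\Phi,\mathcal E}(\underline\nabla)$, is homogeneous of degree $d$ in $(\underline\nabla,\mathcal E)$ jointly, replacing $\underline\nabla$ by $\underline\nabla/|\nabla(p)|$ and $\mathcal E$ by $\mathcal E/|\nabla(p)|$ rescales the remainder by $|\nabla(p)|^{d}$, and we may assume $|\nabla(p)|=1$. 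In this normalization $\|\underline\nabla:C^{j,\alpha}(\Omega)\|\le C(\ell_{j,\alpha})$, $\mathfrak a(\nabla)\ge c>0$ on $\Omega$ with $c$ bounded below in terms of $\inf_D\mathfrak a(\nabla)$, and $\|\mathcal E:C^{j,\alpha}(\Omega)\|$ is at most both $C(j,\alpha)\tilde\epsilon$ and $m:=\|\mathcal E:C^{j,\alpha}(D,|\nabla|)\|$. It then suffices to prove $\|R^{(k)}_{\Phi,\mathcal E}(\underline\nabla):C^{j,\alpha}(\Omega)\|\le C\,m^{k+1}$ with $C$ of the advertised form.

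The heart of the argument is then a composition estimate. Write
\[
R^{(k)}_{\Phi,\mathcal E}(\underline\nabla)=\int_0^1\frac{(1-\sigma)^k}{k!}\,\Psi_\sigma\,d\sigma,
\qquad
\Psi_\sigma(q):=\widetilde B\big(\underline\nabla(q)+\sigma\mathcal E(q),\ \mathcal E(q)\big),
\]
where $\widetilde B(V,W):=D^{(k+1)}\Phi|_V(\otimes^{(k+1)}W)$ is smooth on $E_0\times E$ and a homogeneous polynomial of degree $k+1$ in $W$. By Step 1, the map $q\mapsto(\underline\nabla(q)+\sigma\mathcal E(q),\ \mathcal E(q))$ sends $\Omega$ into the fixed compact set on which $\widetilde B$ is $C^{j+1}$-bounded, uniformly in $\sigma\in[0,1]$. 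The standard chain-and-product rule for H\"older norms then yields, for every $\sigma$,
\[
\|\Psi_\sigma:C^{j,\alpha}(\Omega)\|\le C\big(\Phi,\ell_{j,\alpha},c,j,k\big)\,\|\mathcal E:C^{j,\alpha}(\Omega)\|^{\,k+1}\le C\,m^{k+1},
\]
the exponent $k+1$ coming precisely from the multilinearity of $\widetilde B$ in its second argument together with the $C^{j,\alpha}(\Omega)$-control of $\underline\nabla+\sigma\mathcal E$ and of $\mathcal E$. Integrating against $\tfrac{(1-\sigma)^k}{k!}$ over $[0,1]$ (whose integral is $\tfrac1{(k+1)!}$) preserves the bound, so $\|R^{(k)}_{\Phi,\mathcal E}(\underline\nabla):C^{j,\alpha}(\Omega)\|\le C\,m^{k+1}$; undoing the rescaling restores the factor $|\nabla(p)|^{d}$, i.e. $\|R^{(k)}_{\Phi,\mathcal E}(\underline\nabla)\|_{j,\alpha}(p)\le C|\nabla(p)|^{d}m^{k+1}$, and taking the supremum over $p\in D$ gives the claim since $m=\|\mathcal E:C^{j,\alpha}(D,|\nabla|)\|$.

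The step I expect to be the main obstacle is the composition estimate: one must simultaneously (i) keep the moving base point $\underline\nabla+\sigma\mathcal E$ inside a single compact piece of $E_0$ for all $\sigma\in[0,1]$ and all $q\in\Omega$ --- this is exactly where Proposition \ref{still_regular} and the smallness of $\mathcal E$ \emph{relative to $\mathfrak a(\nabla)|\nabla|$}, rather than merely relative to $|\nabla|$, are needed --- and (ii) track the homogeneity of $\Phi$ so that the ambient weight reappears as $|\nabla|^{d}$ with the correct power while the variation field $\mathcal E$ enters with exactly the exponent $k+1$. Once these are in place, everything else is routine manipulation of localized and weighted H\"older norms.
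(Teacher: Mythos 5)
Your proposal is correct and follows essentially the same route as the paper: represent the remainder via the Taylor integral, use the joint homogeneity of $D^{(k+1)}\Phi$ to extract the weight $|\nabla|^{d}$, invoke Proposition \ref{still_regular} together with the smallness of $\mathcal{E}$ relative to $\mathfrak{a}(\nabla)|\nabla|$ to keep the normalized base point $\underline{\nabla}(\sigma)/|\nabla|$ in a fixed compact subset of $E_0$, and then apply chain/product estimates for localized H\"older norms to obtain the factor $\|\mathcal{E}\|_{j,\alpha}^{k+1}$. Your extra step of localizing to $B_1(p)\cap D$ and rescaling so that $|\nabla(p)|=1$ is only a cosmetic reorganization of the paper's device of normalizing by $|\nabla|$ inside the argument of $D^{(k+1)}\Phi$ and dividing by $|\nabla|^{d}$ at the end.
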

\begin{proof}
Since $D^{(k+1)} \Phi $ is homogeneous degree $d - (k+1)$ we can write
\begin{align*}
R_{\Phi , \mathcal{E}}^{(k)} (\underline \nabla)=  |\nabla|^{d - (k+1)}  \int_0^{1} \frac{(1 - \sigma)^k}{k!} \left. D \Phi^{(k + 1)} \right|_{\underline{\nabla}(\sigma)/|\nabla|} (\otimes^{(k + 1)} \mathcal{E}) d \sigma,
\end{align*}
where as before we have set $\underline{\nabla}(\sigma) : = \underline{\nabla} + \sigma \mathcal{E}$. We then have
\begin{align} \label{HQ0}
\left \|  R_{\Phi , \mathcal{E}}^{(k)} (\underline{\nabla}) \right\|_{j, \alpha} & \leq C\left\| |\nabla|\right\|_{j, \alpha}^{d - (k+1)} \left\|  \left.  D^{(k+1)} \Phi \right|_{\underline{\nabla} (\sigma)/| \nabla |} \right\|_{j, \alpha} \left\|  \mathcal{E} \right\|_{j, \alpha}^{k+1}.
\end{align}
 With $C(j, \alpha)\tilde \epsilon\leq\epsilon$ from Proposition \ref{still_regular}, the hypotheses imply $\underline{\nabla} (\sigma)/ |\nabla|$ remains in a fixed compact subset of $E_0$ and
\begin{align*} 
\left\|  \left.  D^{(k+1)} \Phi \right|_{\underline{\nabla} (\sigma)/| \nabla |} \right\|_{j, \alpha}  \leq C (\ell_{j, \alpha}, \mathfrak{a},k).
\end{align*}
Additionally,
\begin{align*}
\left\| |\nabla|\right\|_{j, \alpha}/|\nabla| (s, \theta) \leq C  \ell_{j, \alpha}.
\end{align*}
Dividing both sides of (\ref{HQ0}) by $|\nabla|^{d}$ then gives the claim.
\end{proof}

\section{Logarithmic spirals} \label{LogarithmicSpirals}
Determining embeddedness of the surface in the main theorem requires quantitative estimates that arise from the curvature, torsion, and propogation rate of $\gamma$. In this section we demonstrate that for all non-trivial, anti-symmetric $\RRR$ and for each $\delta>0$ there exists a curve $\gamma$ satisfying \eqref{frame_derivative}. We first consider spirals $\varphi$ determined by three parameters $a,b,c$. We then demonstrate that each $\gamma$ in the family of logarithmic spirals is a scaling of some $\varphi$ where the parameters depend on $\delta, \kappa_0, \tau_0, \xi$ and $\kappa_0, \tau_0$ depend on $\RRR$. Finally, we show that the mapping $M$ is a diffeomorphism from a tubular neighborhood of the $z$-axis onto a logarithmic cone about $\gamma$ where the size of the neighborhood is on the order of $1/\delta$.
\begin{definition} \label{ExpSpirals}
Given constants $a$, $b$ and $c$ we set
\begin{align} \label{logspiralequation}
\varphi[a, b, c] (t) = e^{a  t} \e_r ( c \, t) +\frac ba e^{a  t} \e_z
\end{align}
where we have abbreviated $\e_r(t) : = (\cos (t), \sin (t), 0)$.
\end{definition}

\begin{proposition} \label{ExpSpiralProperties}
The following statements hold:
\begin{itemize}
\item[(1)] The arc length, curvature and torsion of the curve $\varphi[a, b, c](t)$ are given respectively by:
\begin{align} \notag
d s (t) = \sqrt{a^2 + b^2 + c^2}  \, e^{a t}dt, \quad \kappa (t) : =\frac{\sqrt{a^2 c^2 + c^4}}{ a^2 + b^2  + c^2} e^{- a  t}, \quad \tau(t)  =\frac{bc}{a^2 + b^2 + c^2} e^{- a  t}
\end{align}
\item[(2)] $\varphi$ has the following periodicity: 
\begin{align*}
e^{2 \pi a/c} \varphi[a, b, c] (t) = \varphi[a, b, c](t + 2 \pi/ c).
\end{align*}
\end{itemize}
\end{proposition}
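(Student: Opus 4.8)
The plan is to verify both statements by direct computation from the explicit formula \eqref{logspiralequation}, using the standard Frenet apparatus. Write $\varphi(t) = e^{at}\bigl(\e_r(ct) + \tfrac{b}{a}\e_z\bigr)$, and recall the elementary derivatives $\e_r'(ct) = c\,\e_\theta(ct)$ and $\e_\theta'(ct) = -c\,\e_r(ct)$, where $\e_\theta(t) := (-\sin t, \cos t, 0)$, so that $\{\e_r(ct), \e_\theta(ct), \e_z\}$ is an orthonormal frame for every $t$. I would first compute $\varphi'(t) = e^{at}\bigl(a\,\e_r(ct) + c\,\e_\theta(ct) + b\,\e_z\bigr)$, which has length $|\varphi'(t)| = \sqrt{a^2+b^2+c^2}\,e^{at}$; this is the arc-length element $ds(t)$, giving the first formula.

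Next I would compute the curvature and torsion. The cleanest route is the coordinate-free formulas $\kappa = |\varphi' \times \varphi''|/|\varphi'|^3$ and $\tau = \det(\varphi', \varphi'', \varphi''')/|\varphi'\times\varphi''|^2$. Differentiating once more, $\varphi''(t) = e^{at}\bigl((a^2-c^2)\e_r(ct) + 2ac\,\e_\theta(ct) + ab\,\e_z\bigr)$, and $\varphi'''(t) = e^{at}\bigl((a^3 - 3ac^2)\e_r(ct) + (3a^2c - c^3)\e_\theta(ct) + a^2 b\,\e_z\bigr)$. Taking the cross product $\varphi'\times\varphi''$ in the moving orthonormal frame (so the cross-product rules are the standard ones for $\{\e_r,\e_\theta,\e_z\}$) and simplifying, one gets a vector of the form $e^{2at}$ times a fixed combination; its norm works out to $e^{2at}$ times $\sqrt{(a^2+b^2+c^2)(a^2c^2+c^4)}$ after factoring, and dividing by $|\varphi'|^3 = (a^2+b^2+c^2)^{3/2}e^{3at}$ yields $\kappa(t) = \sqrt{a^2c^2+c^4}\,e^{-at}/(a^2+b^2+c^2)$. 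For the torsion, the scalar triple product $\det(\varphi',\varphi'',\varphi''')$ collapses to $e^{3at}$ times a constant (the $e^{at}$ scaling in each slot pulls out, and the determinant of the frame-coordinate matrix is an explicit polynomial in $a,b,c$ that factors through $bc$); dividing by $|\varphi'\times\varphi''|^2 = e^{4at}(a^2+b^2+c^2)(a^2c^2+c^4)$ gives $\tau(t) = bc\,e^{-at}/(a^2+b^2+c^2)$. I would present these factorizations as the key simplification rather than grinding through every entry.

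For statement (2), the periodicity, I would argue structurally: since $\e_r$ is $2\pi$-periodic, $\e_r(c(t+2\pi/c)) = \e_r(ct + 2\pi) = \e_r(ct)$, while the prefactor becomes $e^{a(t+2\pi/c)} = e^{2\pi a/c}e^{at}$, so both terms of \eqref{logspiralequation} acquire exactly the factor $e^{2\pi a/c}$; hence $\varphi[a,b,c](t+2\pi/c) = e^{2\pi a/c}\varphi[a,b,c](t)$, which is the claim.

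The only real obstacle is bookkeeping: one must carry out the cross-product and determinant computations carefully in the rotating frame to see the clean factorizations $a^2c^2+c^4 = c^2(a^2+c^2)$ and the $bc$-factor in the torsion numerator. These are routine but error-prone, so I would organize the computation around the observation that differentiation acts on $e^{at}\e_r(ct)$ by the fixed $3\times 3$ operator with $\e_r \mapsto a\e_r + c\e_\theta$, $\e_\theta \mapsto -c\e_r + a\e_\theta$, $\e_z \mapsto a\e_z$; iterating this operator gives $\varphi''$ and $\varphi'''$ with no trigonometry left, reducing everything to linear algebra with entries polynomial in $a,b,c$.
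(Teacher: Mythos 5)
Your computation is correct, and the key intermediate quantities you cite do check out: with the right-handed orthonormal frame $\{\e_r,\e_\theta,\e_z\}$ one gets $\varphi'\times\varphi'' = e^{2at}\bigl(-abc\,\e_r - bc^2\,\e_\theta + (a^2c+c^3)\e_z\bigr)$, whose norm is $e^{2at}\sqrt{(a^2c^2+c^4)(a^2+b^2+c^2)}$, and $\det(\varphi',\varphi'',\varphi''') = e^{3at}\,bc^3(a^2+c^2)$, so your formulas for $\kappa$ and $\tau$ follow, with the sign convention matching the paper's $dB/ds = -\tau N$. The route differs mildly from the paper's: the paper computes $\varphi'$ and $\varphi''$ in the same rotating frame but then constructs the Frenet frame $T$, $N$, $B$ explicitly and reads off $\kappa = |T'|/|\varphi'|$ and $\tau$ from $B'\cdot N$, never needing $\varphi'''$; you instead use the coordinate-free formulas $\kappa = |\varphi'\times\varphi''|/|\varphi'|^3$ and $\tau = \det(\varphi',\varphi'',\varphi''')/|\varphi'\times\varphi''|^2$, which costs one more derivative but avoids normalizing $N$ and $B$ and keeps everything as polynomial linear algebra in $a,b,c$ (your observation that differentiation acts by a fixed operator on frame coefficients is a nice way to organize this). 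Both approaches implicitly assume $c\neq 0$ so that the curve is non-degenerate and torsion is defined, and your periodicity argument is identical to the paper's. The only thing to keep in mind if you write this up fully is to actually display the two factorizations you allude to, since they are the entire content of statement (1); as a sketch, though, nothing is missing.
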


\begin{proof}
With $\e_r^\perp = \e_r'$,
\begin{align}\label{phiderivs}
\varphi' (t) = a e^{a  t} \e_r + c  e^{a  t} \e_r^\perp + b  e^{a  t} \e_z \\ \notag
\varphi''(t) = (a^2 - c^2) e^{a  t} \e_r + 2 a c e^{a  t} \e_r^\perp + b a e^{a  t} \e_z.
\end{align}
Then
\begin{align}\notag
|\varphi'| = e^{a  t} (a^2  + b^2  + c^2)^{1/2} :  = A_1 e^{a  t} \\ \notag
|\varphi''| = e^{a  t} (a^4 (1 + b^2/a^2) + 2 a^2 c^2 + c^4)^{1/2} : = A_2 e^{a t}
\end{align}
where the constants $A_1$ an $A_2$ are implicitly defined above. Let $\{T, N, B\}$  be the Frenet frame along $\varphi$. We then have
\begin{align} \notag 
& T = (a \e_r +  c \e_r^\perp + b  \e_z)/A_1, \quad N = \frac{T'}{|T'|} =  \frac{a c  \e_r^\perp -c^2  \e_r}{\sqrt{c^2a^2 +  c^4}} \\ \notag
& B = T \wedge N = \left( a \e_r + c \e_r^\perp + b \e_z \right) \wedge (a c \e_r^\perp - c^2 \e_r)/ (A_1\sqrt{c^2a^2 + c^4}) \\ \notag
&\quad  =  \frac{- ba c \e_r - b c^2 \e_r^\perp +( c a^2 + c^3) \e_z}{A_1 \sqrt{c^4 +c^2 a^2}} \\ \notag
& B' = \frac{- b a c^2 \e_r^\perp + b  c^3 \e_r}{A_1 \sqrt{c^4 + c^2 a^2}}
\end{align}
By calculation, 
\begin{align}\notag
&\kappa (t) = \frac{|T'|}{A_1} e^{- a  t} = \frac{\sqrt{a^2 c^2 + c^4}}{ a^2 + b^2 + c^2} e^{- a  t} \\ \notag
& \frac{dB}{ds}=\frac{B'}{A_1} e^{- a  t } = - \tau N\notag \\
&\tau(t)= \frac{bc}{a^2 +  b^2  + c^2 } e^{- a  t}.\notag
\end{align}
The periodicity follows immediately from the definition of the curve.
\end{proof}

\begin{proposition} \label{LogSpiralsExist}
Given constants $\kappa_0 > 0 $, $\xi,\tau_0\in \Real$, let $\gamma[\kappa_0, \tau_0, \xi] (z) : \mathbb{R} \rightarrow \mathbb{R}^3$ be a smooth curve with
\begin{itemize}
\item[(1)] arc length $ds = e^{\xi  z}dz$.
\item[(2)] curvature $\kappa(z)$ and torsion $\tau(z)$ 
\begin{align} \notag
\kappa(z)  = e^{ - \xi  z}\kappa_0, \quad \tau(z) = e^{ - \xi  z} \tau_0.
\end{align}
\end{itemize}
Then $\gamma$ is unique (up to rigid motions) and determined by 
\begin{equation}\label{gammaform}
 \gamma[\kappa_0, \tau_0, \xi] (z)=   \frac{1}{\sqrt{a_*^2  + b_*^2 + c_*^2}} \varphi[a_*, b_*, c_*](z)
\end{equation}
where 
\begin{align} \notag
a_* = \xi, \quad b_* = \frac{\tau_0}{\kappa_0} \sqrt{\kappa_0^2 + \tau_0^2 + \xi^2} \quad c_* = \sqrt{\kappa_0^2 + \tau_0^2}.
\end{align}

\end{proposition}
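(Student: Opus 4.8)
\emph{Proof proposal.} The statement splits into two independent pieces: an existence-and-uniqueness assertion, which I would deduce from the fundamental theorem of the local theory of space curves, and a verification that the explicit curve in \eqref{gammaform} realizes the prescribed arc length, curvature, and torsion.

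For the first piece, the plan is to re-express the data in terms of arc length. Integrating $ds = e^{\xi z}\,dz$ with $s(0)=0$ gives $s = (e^{\xi z}-1)/\xi$ when $\xi\neq 0$ (and $s=z$ when $\xi=0$), so $1+\xi s = e^{\xi z}>0$ and $z = \xi^{-1}\log(1+\xi s)$ on the image interval $I$ of this map. Substituting, the prescribed curvature and torsion become the smooth functions $\kappa(s)=\kappa_0/(1+\xi s)$ and $\tau(s)=\tau_0/(1+\xi s)$ on $I$, with $\kappa>0$ throughout since $\kappa_0>0$. By the fundamental theorem of the local theory of curves there is a unit-speed curve realizing the pair $(\kappa,\tau)$, unique up to a rigid motion of $\Real^3$; re-parametrizing by $z$ produces a curve $\gamma$ with properties (1)--(2), unique up to rigid motion since the change of parameter is a bijection. (When $\xi=0$ one simply gets a circular helix of constant curvature $\kappa_0$ and torsion $\tau_0$, and \eqref{gammaform} should be read modulo the translation coming from the $\tfrac{b}{a}\e_z$ term in the limit $a\to 0$.)

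For the second piece, the plan is to combine Proposition \ref{ExpSpiralProperties} with the elementary scaling law under a homothety $x\mapsto\mu x$: pointwise arc length is multiplied by $\mu$, while curvature and torsion are each multiplied by $\mu^{-1}$. Taking $\mu=(a_*^2+b_*^2+c_*^2)^{-1/2}$ and $a_*=\xi$, Proposition \ref{ExpSpiralProperties}(1) gives that $\gamma=\mu\,\varphi[a_*,b_*,c_*]$ has arc length $e^{\xi t}\,dt$, curvature $\tfrac{\sqrt{a_*^2 c_*^2+c_*^4}}{\sqrt{a_*^2+b_*^2+c_*^2}}\,e^{-\xi t}$, and torsion $\tfrac{b_* c_*}{\sqrt{a_*^2+b_*^2+c_*^2}}\,e^{-\xi t}$. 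It then remains to check two algebraic identities for the stated values of $a_*,b_*,c_*$: writing $\rho_0^2 := \kappa_0^2+\tau_0^2$, one has $a_*^2+c_*^2 = \rho_0^2+\xi^2$ and $a_*^2+b_*^2+c_*^2 = \rho_0^2(\rho_0^2+\xi^2)/\kappa_0^2$; substituting these collapses the curvature to $\kappa_0 e^{-\xi t}$ and the torsion to $\tau_0 e^{-\xi t}$. The uniqueness from the first piece then identifies $\gamma[\kappa_0,\tau_0,\xi]$ with $\mu\,\varphi[a_*,b_*,c_*]$ up to rigid motion, which is \eqref{gammaform}.

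I do not expect a genuine obstacle here: the only routine calculations are the two displayed algebraic identities and a sign check ensuring that the torsion produced by the Frenet computation in the proof of Proposition \ref{ExpSpiralProperties} carries the same sign as $\tau_0$. The only mild subtleties are the bookkeeping of the $z\leftrightarrow s$ change of parameter (in particular noting that $1+\xi s>0$ on the relevant interval, so $\kappa,\tau$ are honestly smooth there) and the degenerate case $\xi=0$.
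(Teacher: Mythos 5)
Your proposal is correct and follows essentially the same route as the paper: compute the arc length, curvature, and torsion of the rescaled spiral via Proposition \ref{ExpSpiralProperties} together with the homothety scaling law, match them to the prescribed data, and get uniqueness from the fundamental theorem of space curves (the paper derives $a_*, b_*, c_*$ by solving the matching equations, whereas you verify them by substitution, a negligible difference). Your extra remarks on the arc-length reparametrization $s=(e^{\xi z}-1)/\xi$ and the degenerate case $\xi=0$ are sound refinements of details the paper leaves implicit.
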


\begin{proof}
We determine \eqref{gammaform} explicitly which immediately gives the properties of $\gamma$. The uniqueness then follows from the fundamental theorem of space curves.

From Proposition \ref{ExpSpiralProperties} it follows that the arc length, curvature and torsion for the curve 
\begin{align}\notag
\tilde{\varphi} : = \frac{1}{ \sqrt{a^2  + b^2 + c^2}} \varphi[a, b, c](t)
\end{align}
 are given respectively by
\begin{align} \notag 
  d\tilde{s} & = e^{a  t}dt\\ \notag 
   \tilde{\kappa} & = \sqrt{\frac{c^4 +c^2 a^2}{a^2 + b^2  + c^2}} e^{- a  t} : = \tilde{\kappa}_0 e^{- a  t} \\ \notag
    \tilde{\tau} &  ={\frac{bc}{\sqrt{a^2 + b^2 + c^2}} }e^{- a  t} : = \tilde{\tau_0} e^{- a  t}
    \end{align}
The arc length calculation implies $a_*$ must equal $\xi$. We also note that
\[
\tilde \kappa_0^2 + \tilde \tau_0^2 = c^2
\]which gives the expression for $c_*$. Since
\begin{align}\notag
\frac{\tilde{\kappa_0}^2 (0)} {\tilde{\tau_0}^2 (0) } = \frac{c^4 + c^2 a^2}{ b^2 c^2} = \frac{c^2 + a^2}{b^2}
\end{align} substituting in the values for $a_*$ and $c_*$ gives the expression for $b_*$ since
\begin{align}\notag
b = \frac{\tilde{\tau}_0}{\tilde{\kappa}_0} \sqrt{c^2 + a^2} = \frac{\tilde{\tau}_0}{\tilde{\kappa}_0} \sqrt{\tilde{\kappa}_0^2  + \tilde{\tau}_0^2+ \xi^2}.
\end{align} 
\end{proof}

\begin{proposition} \label{CurveClassification}
Let $\RRR$ be a fixed non-trivial anti-symmetric matrix and $\xi\in \Real$. Then for all $\delta >0$, the following hold:
\begin{itemize}
\item[(1)] There exists a unique smooth curve $\gamma(z): \mathbb R \rightarrow \mathbb R^3$ with a frame $\e$ along $\gamma$ satisfying  (\ref{frame_derivative}). Up to rigid motions this curve has the form $\gamma = \gamma[ \delta \kappa_0, \delta \tau_0,   \delta\xi]$ where 
\[
\kappa_0:=|\RRR \e_3|, \quad \tau_0 = \frac{\langle \e_3 \wedge \RRR^2 \e_3, \RRR \e_3 \rangle}{|\RRR \e_3|^2}.
\]

\item[(2)] Set $\rho_0 : = \sqrt{\kappa_0^2 + \tau_0^2}$. Then for any $\alpha$ satisfying
\begin{align} \notag
0 \leq \alpha <  \frac{ e^{ \pi \xi / \rho_0} - 1}{ e^{ \pi \xi / \rho_0} + 1},
\end{align}
 the map $M$ defined in (\ref{Mdef}) is a diffeomorphism of the tubular neighborhood 
\begin{align} \notag
 T(\alpha) : =  \left\{ (x, y, z) : x^2 + y^2 \leq \frac{\alpha^2}{\delta^2  \xi^2} \frac{\tau_0^2 + \xi^2}{\rho_0^2 + \xi^2}\right\}
 \end{align} onto its image. 
\end{itemize}
\end{proposition}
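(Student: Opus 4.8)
The proof splits into the existence/uniqueness statement (1) and the diffeomorphism statement (2); (1) is standard ODE theory fed into Proposition \ref{LogSpiralsExist}, while (2) is an explicit Jacobian computation together with a self-similar reduction for injectivity. For (1): the frame equation $\e'(z)=\delta\RRR\e(z)$ is a constant-coefficient linear system, so for each orthonormal initial frame it has the unique solution $\e(z)=e^{\delta\RRR z}\e(0)$, and antisymmetry of $\RRR$ gives $\tfrac{d}{dz}(\e^{T}\e)=\delta(\e^{T}\RRR^{T}\e+\e^{T}\RRR\e)=0$, so the frame stays orthonormal; then $\gamma(z)=\gamma(0)+\int_{0}^{z}e^{\delta\xi t}\e_{3}(t)\,dt$ is determined up to the rigid motion absorbing $(\gamma(0),\e(0))$. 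To identify $\gamma$ I would read off its Frenet data: $|\gamma'|=e^{\delta\xi z}$, so $\e_{3}=T$ and $ds=e^{\delta\xi z}dz$; then $dT/ds=\delta e^{-\delta\xi z}\RRR\e_{3}$ gives $\kappa=\delta e^{-\delta\xi z}|\RRR\e_{3}|$, and since $e^{\delta\RRR z}\in SO(3)$ the quantity $|\RRR\e_{3}(z)|=|\RRR\e_{3}(0)|$ is constant, so $\kappa=\delta\kappa_{0}e^{-\delta\xi z}$; a parallel computation of $dB/ds$ with $N=\RRR\e_{3}/|\RRR\e_{3}|$ and $B=\e_{3}\wedge N$ gives $\tau=\delta\tau_{0}e^{-\delta\xi z}$ with $\tau_{0}$ as stated (again $z$-independent by $SO(3)$-invariance; the sign is a convention matter). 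By Proposition \ref{LogSpiralsExist} and the fundamental theorem of space curves, $\gamma=\gamma[\delta\kappa_{0},\delta\tau_{0},\delta\xi]$ up to rigid motion — here one takes $\e_{3}\notin\ker\RRR$ so that $\kappa_{0}>0$ and $\rho_{0}>0$.

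For (2), $M$ is manifestly smooth, so it suffices to show $dM$ is nonsingular on $T(\alpha)$ and $M|_{T(\alpha)}$ is injective. With $w:=x\e_{1}(z)+y\e_{2}(z)$ (so $|w|^{2}=x^{2}+y^{2}$ and $w\perp\e_{3}(z)$) and using $\gamma'=e^{\delta\xi z}\e_{3}$, $\e_{i}'=\delta\RRR\e_{i}$, I get $\partial_{x}M=e^{\delta\xi z}\e_{1}$, $\partial_{y}M=e^{\delta\xi z}\e_{2}$, $\partial_{z}M=e^{\delta\xi z}(\e_{3}+\delta\xi\,w+\delta\RRR w)$, whence $\det dM=e^{3\delta\xi z}\bigl(1-\delta\langle\RRR\e_{3},w\rangle\bigr)$ (the $\delta\xi\,w$ term drops since $w\in\mathrm{span}(\e_{1},\e_{2})$, and $\langle\e_{1}\wedge\e_{2},\RRR w\rangle=\langle\e_{3},\RRR w\rangle=-\langle\RRR\e_{3},w\rangle$). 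Since $|\langle\RRR\e_{3},w\rangle|\le\kappa_{0}\sqrt{x^{2}+y^{2}}$, nonvanishing of $dM$ on $T(\alpha)$ reduces to $\delta\kappa_{0}\sqrt{x^{2}+y^{2}}<1$ there, i.e.\ to $\alpha^{2}\kappa_{0}^{2}(\tau_{0}^{2}+\xi^{2})<\xi^{2}(\rho_{0}^{2}+\xi^{2})$, which follows from $\alpha<\tfrac{e^{\pi\xi/\rho_{0}}-1}{e^{\pi\xi/\rho_{0}}+1}=\tanh\!\bigl(\tfrac{\pi\xi}{2\rho_{0}}\bigr)$ by an elementary estimate using $\kappa_{0}^{2}\le\rho_{0}^{2}$ (and we may assume $\xi>0$, the case $\xi<0$ being symmetric and $\xi=0$ vacuous).

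For injectivity the key inputs are: (a) $M(x,y,z)-\gamma(z)\perp\e_{3}(z)=T(z)$, so $M$ sends each slice $\{z=\text{const}\}$ affinely onto a disk in the normal plane $P(z)$ of $\gamma$, up to the scale $e^{\delta\xi z}$; (b) from Proposition \ref{LogSpiralsExist}, $|\gamma(z)|=c_{\gamma}e^{\delta\xi z}$ with $c_{\gamma}=\sqrt{\tau_{0}^{2}+\xi^{2}}/(\delta\xi\sqrt{\rho_{0}^{2}+\xi^{2}})$, and the defining bound of $T(\alpha)$ is precisely $\sqrt{x^{2}+y^{2}}\le\alpha c_{\gamma}$, hence $|M(x,y,z)-\gamma(z)|\le\alpha|\gamma(z)|$; (c) the screw-dilations $\Phi_{s}(p)=e^{\delta\xi s}R_{\delta\rho_{0}s}p$ carry $P(z)\mapsto P(z+s)$, $\gamma(z)\mapsto\gamma(z+s)$ and scale by $e^{\delta\xi s}$, so the set $M(T(\alpha))=\bigcup_{z}D(z)$, with $D(z)$ the normal disk of radius $\alpha c_{\gamma}e^{\delta\xi z}$ about $\gamma(z)$, is $\Phi_{s}$-invariant. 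By (a), $M$ is injective iff the $D(z)$ are pairwise disjoint, and by (c) iff $D(0)\cap D(z)=\emptyset$ for all $z\ne0$. For $|\delta\xi z|>\ln\tfrac{1+\alpha}{1-\alpha}$ this holds by comparing distances to the origin (points of $D(z)$ have norm in $[(1-\alpha)c_{\gamma}e^{\delta\xi z},(1+\alpha)c_{\gamma}e^{\delta\xi z}]$), and the hypothesis on $\alpha$ makes this threshold smaller than a half-period $\pi/(\delta\rho_{0})$; what remains is the compact range $|z|\le\ln\tfrac{1+\alpha}{1-\alpha}/(\delta\xi)$, where $P(0)\ne P(z)$ since $\kappa_{0}>0$, and one shows the disks about $\gamma(0)$ and $\gamma(z)$ do not meet — for $|z|$ small this is the statement that the normal planes foliate a neighborhood of radius $<1/\kappa$, which holds because $\alpha c_{\gamma}<1/(\delta\kappa_{0})$ from the Jacobian step.

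The main obstacle is this last, intermediate range of $z$ (up to nearly a half-period): there the spiral turns through an angle close to $\pi$, so triangle-inequality or arc-length estimates for $|\gamma(z_{1})-\gamma(z_{2})|$ lose too much, and one must exploit the conical-spiral geometry directly — e.g.\ pass to logarithmic-spiral coordinates in which $\Phi_{s}$ becomes a translation, turning disjointness of the $D(z)$ into a genuinely compact transversality statement, or estimate $\langle p-\gamma(z),\e_{3}(z)\rangle$ along $\gamma$ via its osculating circle. The constant $\tfrac{e^{\pi\xi/\rho_{0}}-1}{e^{\pi\xi/\rho_{0}}+1}$ is exactly the threshold at which this separation holds; everything else — part (1), the Jacobian computation, and the large-$|z|$ case — is routine.
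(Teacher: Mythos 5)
Your part (1) is correct and follows the paper's route (read off the Frenet data from \eqref{frame_derivative}, note $|\RRR\e_3|$ and the torsion expression are $z$-independent, then invoke Proposition \ref{LogSpiralsExist}), and your Jacobian computation is sound — in fact more careful than the paper's displayed formula, since you retain the $\delta\RRR w$ term and get $\det dM = e^{3\delta\xi z}\bigl(1-\delta\langle\RRR\e_3,w\rangle\bigr)$; the nondegeneracy on $T(\alpha)$ does follow from the hypothesis on $\alpha$, though your ``elementary estimate using $\kappa_0^2\le\rho_0^2$'' glosses a genuine optimization (the worst case is $\tau_0^2=(\rho_0^2-\xi^2)/2$, and one must check $\tanh(\pi t/2)\le 2t/\sqrt{1+t^2}$ for $t=\xi/\rho_0\le 1$, which holds but is not immediate from $\kappa_0\le\rho_0$). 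The genuine gap is the one you flag yourself: you never prove disjointness of the normal disks $D(0)$ and $D(z)$ for $0<|z|<\tfrac{1}{\delta\xi}\ln\tfrac{1+\alpha}{1-\alpha}$, i.e.\ for separations below your norm-comparison threshold (which is below a half-period). In that range the two normal planes are transverse and may well intersect inside the disks a priori; the local bound $\alpha c_\gamma<1/(\delta\kappa_0)$ only rules out short-range self-intersections of the tube, not intersections after the spiral has turned through an angle comparable to $\pi$. Both of your proposed remedies (logarithmic-spiral coordinates, osculating-circle estimates) are left as suggestions, so statement (2) is not established.

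For comparison, the paper does not attack the intermediate range head-on: it first argues that a coincidence $M(x_1,y_1,z_1)=M(x_2,y_2,z_2)$ forces $\e_3(z_1)=\pm\e_3(z_2)$ (using that $\e_3\cdot\e_z$ is constant along $\gamma$, and an orthogonality claim about the slice planes), hence $z_1-z_2=n\pi/(\delta\rho_0)$, and only then runs a distance comparison built on $|M(0,0,z)|=\tfrac{e^{\delta\xi z}}{\delta\xi}\sqrt{\tfrac{\tau_0^2+\xi^2}{\rho_0^2+\xi^2}}$ — which is essentially your large-$|z|$ norm estimate — to exclude those half-period-multiple separations under the stated bound on $\alpha$. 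So your quantitative step coincides with the paper's; what your write-up lacks is a substitute for that reduction, i.e.\ an argument that coincident points can only occur when the slice tangents are (anti)parallel, or some other treatment of separations in $\bigl(0,\pi/(\delta\rho_0)\bigr)$. (You should scrutinize the paper's reduction too: it rests on the assertion that each slice lies in a plane through the origin, which sits uneasily with $|\gamma(z)|$ being non-constant; so whatever argument you supply for the intermediate range would be a genuine contribution rather than a transcription.)
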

\begin{proof}
To prove statement $(1)$, fix an initial point and presume $\e_1(0), \e_2(0), \e_3(0)$ corresponds to the standard basis in $\Real^3$. Note that \eqref{frame_derivative} implies
the Frenet frame for $\gamma$ will have the form
\begin{align}\notag 
T = \e_3, \quad N = \frac{\RRR \e_3}{|\RRR \e_3|}, \quad B = \frac{\e_3 \wedge  \RRR \e_3}{|\RRR \e_3|}.
\end{align}
Additionally, the arc length for $\gamma$ will be given by $ds =  e^{\delta \xi  z}dz$.  It then follows that
\begin{align}\notag
\kappa = |T'| e^{- \delta \xi z} ={\delta e^{- \delta \xi z}}{|\RRR \e_3|}, \quad \tau =   B' \cdot N e^{- \delta \xi  z} = \delta \frac{\langle \e_3 \wedge \RRR^2 \e_3, \RRR \e_3 \rangle}{|\RRR \e_3|^2} e^{- \delta \xi  z}
\end{align}
where $\kappa$ and $\tau$ denote the curvature and torsion for $\gamma$, respectively.  The uniqueness result of Proposition \ref{LogSpiralsExist} then implies statement $(1)$.

To prove statement $(2)$, note that  
\begin{align} \label{CC2}
\left. D  M \right|_{(x, y, z)} = 
e^{\delta  \xi z} \left( \e_x^* \otimes \e_1 + \e_y^*\otimes \e_2 +  \e_z^* \otimes  \e_3+ \delta \, x \e_z^* \otimes \e_1   + \delta \, y \e_z^* \otimes \e_2   \right).
\end{align}
Thus, $\det{\left. D M \right|_{(x, y, z)}} = e^{3 \delta \xi  z}$ and $M$ is a local diffeomorphism at each point.  

We now show $M$ is a global diffeomorphism. First note that for each $z=c$, $M(x,y,c)$ is a disk contained in a plane through the origin with normal direction $\e_3(\gamma(c))$. Now suppose $M(x_1, y_1, z_1) = M(x_2, y_2, z_2) = \mathbf p$. Then $\mathbf p \cdot \e_3(\gamma(z_1))= \mathbf p \cdot \e_3(\gamma(z_2))=0$. The definition of $\gamma$ implies $\e_3(\gamma(z)) \cdot \e_z$ is constant and thus $\mathbf p \cdot (\e_3(\gamma(z_1))-\e_3(\gamma(z_2)))=0$ is a condition on radial values (i.e., the values in the directions $\e_x, \e_y$). We immediately conclude $\e_3(\gamma(z_1))=\pm \e_3(\gamma(z_2))$. Thus, \eqref{phiderivs} implies that up to reordering $z_1$ and $z_2$,
\[
z_1-z_2 =  \pi n / c_* =  \pi n / (\delta \rho_0) \text{ for } n \in \mathbb Z^+.
\]
Using \eqref{logspiralequation} and Proposition \ref{LogSpiralsExist}, 
\begin{align}\notag
 |M(0, 0, z)|  = \frac{\sqrt{1 + b_*^2/a_*^2}}{\sqrt{a_*^2 + b_*^2 + c_*^2}} e^{\delta   \xi z}
\end{align}
where 
\begin{align}\notag
a_* = \delta \xi, \quad b_* = \delta \frac{\tau_0}{\kappa_0} \sqrt{\rho_0^2 + \xi^2}, \quad c_* = \delta \rho_0.
\end{align}
From this we get directly that 
\begin{align}\notag
b_*^2/a_*^2 + 1  & =  \frac{1}{ \xi^2 \kappa_0^2} \left( \tau_0^2\rho_0^2 + \tau_0^2\xi^2 + \xi^2  \kappa_0^2 \right) \\ \notag
& = \frac{\rho_0^2}{\xi^2 \kappa_0^2} \left( \tau_0^2 + \xi^2\right), \\ \notag
a_*^2 + b_*^2 + c_*^2 & = \delta^2 \left(\xi^2 + \rho_0^2 + \tau_0^2/\kappa_0^2 (\rho_0^2 + \xi^2) \right)  \\ \notag
& = \delta^2 (\rho_0^2 + \xi^2) (1 + \tau_0^2/\kappa_0^2).
\end{align}
We then conclude that 
\begin{align}\notag
|M(0, 0, z)| &= \frac {e^{\delta \xi z}} {\delta \xi} \sqrt{\frac{\tau_0^2 + \xi^2}{\rho_0^2 + \xi^2}}.
\end{align}
Therefore
\begin{align} \notag
|M(0, 0,  z + n \pi/ (\delta \rho_0)) - M(0, 0, z)| & \geq \left(e^{n \pi \xi/ \rho_0}  - 1\right) |M(0, 0, z)|   \\ \notag
&  \geq \left(e^{n \pi \xi/ \rho_0}  - 1\right)\frac{e^{\delta \xi z} }{ \delta \xi  } \sqrt{ \frac{\tau_0^2 + \xi^2}{\rho_0^2 + \xi^2}} \\ \notag
 | M(x, y, z) - M(0, 0, z) |  & = e^{\delta \xi  z} \sqrt{x^2 + y^2}.
\end{align}
Presuming $\sqrt{x^2+y^2} \leq \frac\alpha { \delta \xi }\sqrt{\frac{\tau_0^2 + \xi^2}{\rho_0^2 + \xi^2}}$ for some $\alpha$ to be determined, we note
\begin{align}\notag
|M(x_1, y_1, z + n \pi/ (\delta \rho_0) ) - M(x_0, y_0, z)| & \geq -   |M(x_1, y_1, z + n \pi/ (\delta \rho_0) ) - M(0, 0, z   + n \pi/ (\delta \rho_0))| \\ \notag 
& \quad-  |M(x_0, y_0, z  ) - M(0, 0, z  )|  \\ \notag
& \quad + |M(0, 0,  z  + n \pi/ (\delta \rho_0)) - M(0, 0, z)|\\ \notag 
& \geq  \left(e^{n \pi \xi/ \rho_0}  - 1\right)\frac{e^{\delta \xi z}}{ \delta \xi} \sqrt{ \frac{\tau_0^2 + \xi^2}{\rho_0^2 + \xi^2}}  -  (1 + e^{n \pi \xi/\rho_0}) \frac {e^{\delta \xi z}\alpha}{ \delta \xi }\sqrt{\frac{\tau_0^2 + \xi^2}{\rho_0^2 + \xi^2}}.
\end{align}
Thus, requiring that 
\begin{align}\notag
 \left(e^{n \pi \xi/ \rho_0}  - 1\right) -  \alpha (1 + e^{n \pi \xi/\rho_0}) > 0 
\end{align}
gives that the points $M(x_1, y_1, z  + n \pi/ (\delta \rho_0))$ and $M(x_0, y_0, z)$ do not intersect.  This completes the proof.
\end{proof}

\section{Geometric quantities on $G$} \label{GeometricQuantitiesOnG}
In this section we record estimates of the relevant geometric data for the immersion $G$. Throughout this section we presume that for a given $\RRR, \xi$, we choose $0<\delta<\delta_0/|\xi|$ such that $\delta(1+|\RRR|+|\xi|) < \tilde \epsilon$. We then define $G$ by a curve $\gamma$ satisfying \eqref{frame_derivative}.

\subsection{The normalized derivatives $\tilde{\nabla} G^{(k)}$}
In order to conveniently estimate geometric quantities on $G$ such as the mean curvature, we want to  normalize $G$ and its derivatives in a way that controls for rotations and dilations, whose effect on quantities such as the mean curvature and unit normal are easily extracted.   
\begin{definition} \label{GNormalized}
 We let $R(\theta)$ be the rotation given below:
\begin{align*}
R( \theta)  : = \e_1 (\theta) \otimes \e_x^* + \e_2 (\theta) \otimes \e_y^* + \e_3 (\theta) \otimes \e_z^*
\end{align*}where $\{\e_x^*, \e_y^*, \e_z^*\}$ is the dual basis in $\Real^3$ to the standard basis. 
We then define the normalized derivatives of $G$ as:
\begin{align*}
\tilde{\nabla}^{(k)} G (s, \theta): = e^{-\delta \xi \theta}R(\theta)\nabla^{(k)} G (s, \theta),
\end{align*}
and we set
\begin{align*}
\tilde{\underline{\nabla}}[ G] : = \left( \tilde{\nabla} G, \tilde{\nabla}^2 G\right).
\end{align*}
\end{definition}

\begin{proposition} \label{normalized_derivative_properties}
The following statements hold:
\begin{enumerate}
\item \label{1}The normalized derivatives $\tilde{\nabla} G(s, \theta)$ are $2 \pi$-periodic in $\theta$.
\item \label{2}Letting $\tilde \nu_G:=\nu (\tilde{\nabla} [G]) $,
\begin{align} \notag
\nu_G =\nu(\nabla G) = R^{-1}(\theta)\left( \tilde \nu_G \right).
\end{align} 
\item \label{3}For $H_G:=H(\underline {\nabla} G)$, 
\begin{align} \notag
H_G = e^{-\delta \xi \theta} H (\tilde{\underline{\nabla}} [G]).
\end{align}
\end{enumerate}
\begin{proof}
Item \eqref{1} follows directly from the definition of $F$ in (\ref{Fdef}). Items \eqref{2} and \eqref{3} follow from the fact that $\nu$ and $H$ are homogeneous degree $0$ and $-1$ quantities, respectively, and their behavior under rotations and dilations. 
\end{proof}
\end{proposition}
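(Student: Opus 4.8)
\textbf{Proof proposal for Proposition \ref{normalized_derivative_properties}.}

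The plan is to verify each of the three items directly from the definitions, exploiting the two structural facts that (i) $G = M\circ F$ with $M(x,y,z) = \gamma(z) + e^{\delta\xi z}(x\e_1(z) + y\e_2(z))$, and (ii) the curve $\gamma$ and frame $\e$ were built precisely so that the composition has an exact scaling-times-rotation symmetry under $\theta \mapsto \theta + 2\pi$. For item \eqref{1}, I would write out $\nabla^{(k)}G$ using the chain rule applied to $M\circ F$ and \eqref{frame_derivative}: since $F(s,\theta) = \sinh(s)\sin\theta\,\e_x + \sinh(s)\cos\theta\,\e_y + \theta\,\e_z$, the $z$-coordinate of $F$ is exactly $\theta$, so every appearance of $\gamma$, $\e_i$, and the factor $e^{\delta\xi z}$ is evaluated at $z = \theta$. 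The explicit expression for $DM$ recorded in \eqref{CC2} shows that the derivatives of $G$ carry an overall factor $e^{\delta\xi\theta}$ times a matrix built from $\e_1(\theta),\e_2(\theta),\e_3(\theta)$ acting on $\theta$-periodic functions of $(s,\theta)$ (the $F$-derivatives, which are manifestly $2\pi$-periodic in $\theta$ because $\sin,\cos$ are and the lone non-periodic term $\theta\,\e_z$ differentiates to something periodic). Applying $e^{-\delta\xi\theta}R(\theta)$ — where $R(\theta) = \e_1(\theta)\otimes\e_x^* + \e_2(\theta)\otimes\e_y^* + \e_3(\theta)\otimes\e_z^*$ is the rotation carrying the standard frame to $\e(\theta)$ — exactly strips off both the $e^{\delta\xi\theta}$ dilation and the frame rotation, leaving a quantity expressed purely in terms of $F$-derivatives and the $2\pi$-periodic quantities $\delta\RRR$, $\delta\xi$ (recall $\e'(\theta) = \delta\RRR\e(\theta)$, so higher $\theta$-derivatives of the frame only produce powers of the constant matrix $\delta\RRR$ times $\e(\theta)$, and the $R(\theta)^{-1}$ pulls the $\e(\theta)$ back to constants). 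Hence $\tilde\nabla^{(k)}G$ is $2\pi$-periodic in $\theta$.

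For items \eqref{2} and \eqref{3}, the point is purely the equivariance of the homogeneous quantities $\nu$ (degree $0$) and $H$ (degree $-1$) under the action of dilations and rotations on the derivative data. Concretely, $\tilde{\underline\nabla}[G] = (e^{-\delta\xi\theta}R(\theta)\nabla G,\ e^{-\delta\xi\theta}R(\theta)\nabla^2 G)$, i.e. we have rescaled the first-derivative block by the positive scalar $e^{-\delta\xi\theta}$ and the second-derivative block by the same scalar, and simultaneously rotated the $\Real^3$-target by $R(\theta)$. Since the unit normal $\nu$ of an immersion is unchanged under a positive rescaling of all derivatives (homogeneity degree $0$) and transforms by $A\mapsto A\nu$ under an ambient rotation $A$, we get $\nu(\tilde{\underline\nabla}[G]) = R(\theta)\nu(\underline\nabla G)$, which is item \eqref{2} after applying $R(\theta)^{-1} = R(\theta)^T$ to both sides. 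Similarly the scalar mean curvature $H$ is invariant under ambient rotations and homogeneous of degree $-1$ in the derivative data, so rescaling both derivative blocks by $e^{-\delta\xi\theta}$ multiplies $H$ by $(e^{-\delta\xi\theta})^{-1} = e^{\delta\xi\theta}$; that is $H(\tilde{\underline\nabla}[G]) = e^{\delta\xi\theta}H(\underline\nabla G)$, which rearranges to item \eqref{3}. One should double-check that $H$ as packaged in the homogeneous-quantity formalism of Section \ref{EstimatingHomogeneousQuantities} genuinely depends on $(\nabla\phi,\nabla^2\phi)$ with the stated homogeneity — this is exactly the example flagged right after Definition \ref{TRdef}'s surrounding discussion ("Examples of such functions are the mean curvature, unit normal, \dots").

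The only mild subtlety — and the step I would treat most carefully — is item \eqref{1}: one must be sure that after the chain rule, \emph{every} non-periodic ingredient is accounted for by the prefactor $e^{\delta\xi\theta}R(\theta)$. The dangerous terms are the translational part $\gamma(\theta)$ of $M$ (which appears in $G$ itself but drops out of all derivatives of order $\geq 1$) and the $\theta\,\e_z$ term in $F$ (whose $\theta$-derivative is the constant $\e_z$, hence periodic). Everything else is a product of: $F$-derivatives (periodic in $\theta$), finitely many factors of $\delta\xi$ coming from differentiating $e^{\delta\xi\theta}$, the frame vectors $\e_i(\theta)$ and their derivatives — but each such derivative is $\delta\RRR$ applied to the frame, so after left-multiplying by $R(\theta)^{-1}$ these collapse to the constant matrix entries of $\delta\RRR$ acting on the fixed standard basis. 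So I would organize the argument as: (a) write $\nabla^{(k)}G = e^{\delta\xi\theta}\,R(\theta)\,\Psi_k(s,\theta)$ where $\Psi_k$ is assembled from the listed periodic pieces, proved by induction on $k$ using the product rule and $\e'(\theta)=\delta\RRR\e(\theta)$; (b) conclude $\tilde\nabla^{(k)}G = \Psi_k$ is periodic. This induction is entirely mechanical once the bookkeeping is set up, so no real obstacle remains; the proposition is essentially a formal consequence of the construction's built-in symmetry together with the homogeneity of $\nu$ and $H$, which is why the authors dispatch it in a few lines.
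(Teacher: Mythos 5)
Your proposal is correct and takes essentially the same route as the paper, which disposes of item (1) by the definition of $F$ and of items (2)--(3) by the degree-$0$ and degree-$(-1)$ homogeneity of $\nu$ and $H$ together with their equivariance under ambient rotations and the uniform dilation of the derivative blocks. One small wording slip: $\gamma(\theta)$ does not ``drop out'' of derivatives of order $\geq 1$ --- rather $\gamma'(\theta)=e^{\delta\xi\theta}\e_3(\theta)$ (and its higher derivatives are $e^{\delta\xi\theta}$ times constant-coefficient combinations of the frame), which is exactly the dilation-times-rotated-frame form your induction already absorbs, so the argument is unaffected.
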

\subsection{Comparing immersions on spirals with straight lines}
In \cite{BK}, we considered immersions of the form:
\begin{align} \label{G_0Def}
G_{0  \delta}(s, \theta) =\left( e^{\delta  \theta} \sin (\theta)\sinh(s),  e^{\delta  \theta} \cos(\theta)\sinh(s), \frac{1}{\delta} e^{\delta  \theta}  \right).
\end{align}

\begin{theorem}[Theorem 1 from \cite{BK}] \label{BK1Theorem}
There are constants $ \epsilon_0, \delta_0  > 0$ sufficiently small so that for any $0<\delta < \delta_{0}$, there is a function $u_{0 \delta} (s): [-\epsilon_0 \delta^{-1/4}, \epsilon_0 \delta^{-1/4}] \rightarrow \Real$ such that:
\begin{enumerate}
\item The normal graph over $G_{0 \delta}$ by the function $w_{0\delta}(s, \theta) :  = e^{\delta \xi \theta} u_{0\delta} (s)$ is an embedded minimal surface with boundary.
\item $u_{0\delta}(s)$ is an odd function.
\item There is a constant  $C > 0 $ sufficiently large so that $u_{0\delta}$ satisfies the estimate
\begin{align}\notag
\| u_{0\delta}: C^{j, \alpha} ([0, \epsilon_0 \delta^{-1/4}], s^2) \| \leq C \delta.
\end{align}
\end{enumerate}
\end{theorem}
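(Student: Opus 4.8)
The plan is to reduce the minimal surface equation for normal graphs over $G_{0\delta}$ to a singularly perturbed ODE in $s$ alone, and then solve it by a contraction-mapping argument on a weighted Hölder space. First I would exploit the explicit structure of $G_{0\delta}$ in \eqref{G_0Def}: it is the composition of the helicoid parameterization $F$ with the dilation-by-$e^{\delta\theta}$ map along the degenerate ``straight-line'' spiral (the case $|\RRR|=0$), so the same normalization as in Definition \ref{GNormalized} applies. Writing $w_{0\delta}(s,\theta)=e^{\delta\xi\theta}u_{0\delta}(s)$ and using that the mean curvature is homogeneous of degree $-1$, the quantity $Q[u]:=e^{\delta\xi\theta}\cosh^2(s)\,H[e^{\delta\xi\theta}u]$ is $\theta$-independent whenever $u=u(s)$, exactly as in the heuristic in the introduction. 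Thus the minimal surface equation collapses to an equation of the form $\mathcal L_F u_{0\delta} = \mathcal N_\delta(u_{0\delta})$, where $\mathcal L_F$ is the (flat-model) stability operator of the helicoid written in the $(s,\theta)$ coordinates restricted to $\theta$-independent functions — an ordinary differential operator in $s$ — and $\mathcal N_\delta$ collects the error from bending/scaling plus the quadratic remainder. The error term $\mathcal N_\delta(0)$ is $O(\delta)$ in the weighted norm $C^{j,\alpha}([0,\epsilon_0\delta^{-1/4}],s^2)$ because the perturbation of $F$ by the $e^{\delta\theta}$ factor is controlled by Proposition \ref{HQEstimates}, with the weight $s^2$ matching the homogeneity degree.

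Next I would analyze the linearized operator $\mathcal L_F$ on $\theta$-independent functions over the interval $[-\epsilon_0\delta^{-1/4},\epsilon_0\delta^{-1/4}]$. On the nose, $\mathcal L_F$ has a one-dimensional kernel among $\theta$-independent functions — coming from the translational Killing field in the $\e_z$ direction, which is $\theta$-independent — but that Jacobi field grows, so on a finite interval with the right boundary condition (Dirichlet at $s=\pm\epsilon_0\delta^{-1/4}$) the operator is invertible, with a bound on the inverse that degrades polynomially in the interval length, hence like a fixed power of $\delta^{-1}$. The key point is that the weighted norm with weight $s^2$ is tuned so that this inverse is bounded by a constant \emph{independent of $\delta$} on the class of right-hand sides we produce; this is the one-dimensional analogue of Proposition \ref{flat_inverse}, and the oddness claim (2) is automatic because the forcing term $\mathcal N_\delta(0)$ is odd in $s$ (the bending perturbation of the helicoid about a straight axis respects the symmetry $s\mapsto -s$, $\theta\mapsto\theta+\pi$) and the inverse preserves the odd subspace.

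I would then set up the fixed-point scheme $u\mapsto \mathcal L_F^{-1}\mathcal N_\delta(u)$ on the ball of radius $C\delta$ in $C^{j,\alpha}([0,\epsilon_0\delta^{-1/4}],s^2)$ (extended oddly). Proposition \ref{HQEstimates} gives the quadratic estimate $\|\mathcal N_\delta(u)-\mathcal N_\delta(u')\|\le C(\|u\|+\|u'\|+\delta)\|u-u'\|$ in the weighted norm, so on a ball of radius $C\delta$ with $\delta$ small the map is a contraction; its fixed point $u_{0\delta}$ satisfies the estimate in (3). Finally, embeddedness in (1) follows because on the interval $|s|\le \epsilon_0\delta^{-1/4}$ the graph function $w_{0\delta}=e^{\delta\xi\theta}u_{0\delta}$ is $C^1$-small (the weight $s^2$ together with $\|u_{0\delta}\|\le C\delta$ gives $|w_{0\delta}|\lesssim \delta\,s^2\lesssim \epsilon_0^2\delta^{1/2}$ and similarly for first derivatives), so the normal graph stays within a region where $G_{0\delta}$ together with its normal exponential map is an embedding — here one uses that $G_{0\delta}$ itself is embedded on this range, which is a direct computation from \eqref{G_0Def} since the $z$-coordinate $\frac1\delta e^{\delta\theta}$ is strictly monotone in $\theta$ while the cross-sections are embedded helicoid pieces scaled by $e^{\delta\theta}$.

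The main obstacle is the linear step: controlling $\mathcal L_F^{-1}$ on an interval whose length blows up like $\delta^{-1/4}$ while keeping the bound $\delta$-independent in the correct weighted norm. This forces the precise choice of weight $s^2$ and of interval length $\epsilon_0\delta^{-1/4}$ — a shorter weight power or a longer interval would lose the uniform bound — and it is where the interplay between the growth of the $\e_z$-Jacobi field and the decay built into the weight must be balanced exactly.
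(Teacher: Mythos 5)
First, a point of comparison: this paper does not prove Theorem \ref{BK1Theorem} at all --- it is imported verbatim from \cite{BK} --- so there is no internal proof to measure your argument against, only the traces of \cite{BK} that appear here (notably the explicit $\theta$-independent inverse \eqref{MIT1} and the appeal to Lemma 8 of \cite{BK}). Judged on its own terms, your skeleton is the expected one and is consistent with those traces: the one-parameter similarity invariance of $G_{0\delta}$ in \eqref{G_0Def} reduces the graph equation for $w=e^{\delta\xi\theta}u(s)$ to an ODE in $s$, one inverts the linearized ODE in an $s^2$-weighted space, runs a fixed point, and gets oddness from the symmetry $(s,\theta)\mapsto(-s,\theta+\pi)$, which composes a dilation with a normal-reversing map and makes the forcing odd.

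There are, however, two concrete gaps as written. (a) The linear step: on $\theta$-independent functions the operator is $\partial_s^2+2\cosh^{-2}(s)$, whose kernel is spanned by $\tanh(s)$ (this \emph{is} the $\e_z$-translation field $\kappa_z$; it is bounded and odd, not growing) and by $s\tanh(s)-1$ (even, linearly growing, not a translation field), so the mechanism you invoke is misstated. More seriously, imposing Dirichlet conditions at $s=\pm T$, $T=\epsilon_0\delta^{-1/4}$, is incompatible with conclusion (3): the solution normalized at $s=0$ (the variation-of-parameters formula quoted in \eqref{MIT1}) satisfies $|u|\lesssim\delta s^2$ and hence is of size $\sim\delta T^2$ at the endpoints, so forcing $u(\pm T)=0$ adds a homogeneous correction $a\tanh(s)$ with $|a|\sim\delta T^2=\epsilon_0^2\delta^{1/2}$, which near $s\sim 1$ gives only $\|u:C^{j,\alpha}([0,T],s^2)\|=O(\delta^{1/2})$, not $O(\delta)$. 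The theorem imposes no boundary condition (the surface has boundary), and the $C\delta$ bound with weight $s^2$ is precisely the signature of the solution obtained by integrating twice from $s=0$; that is the normalization your fixed point must propagate, and the later estimates in this paper (e.g.\ \eqref{SSP5}) actually need it. (b) Embeddedness: you treat $w_{0\delta}=e^{\delta\xi\theta}u_{0\delta}$ as globally $C^1$-small, but the factor $e^{\delta\xi\theta}$ is unbounded in $\theta$; smallness only holds after quotienting by the similarity $(s,\theta)\mapsto(s,\theta+c)$ paired with dilation by $e^{\delta\xi c}$, and one must in addition compare the normalized graph displacement with the spacing $\sim\frac{1}{\delta\xi}e^{\delta\xi\theta}\bigl(e^{2\pi\delta\xi}-1\bigr)$ between successive sheets of the spiral --- the analogue of Proposition \ref{CurveClassification}(2) and of the fundamental-domain argument in Section \ref{MainTheorem}. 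With the normalization in (a) corrected and the embeddedness argument run modulo the similarity group, your outline becomes the expected proof.
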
Notice that for any $\xi\neq0$, we can apply this theorem to immersions $G_{0\, \delta \xi}$ as long as $0<\delta|\xi|<\delta_0$ and all domain bounds and estimates then have $\delta$ replaced by $\delta |\xi|$. Moreover, as $\xi \to 0$, an appropriate translation of $G_{0\, \delta \xi}$ converges to the helicoidal embedding by $F$. Thus, when $\xi =0$, it will be natural to replace estimates for $G_{0 \, 0}$ by estimates for $F$. 

Throughout, we will take the liberty of suppressing the dependence of these immersions and maps on $\delta, \xi$ from the notation and instead write
\begin{align}\notag
G_0 : = G_{0 \, \delta \xi}, \quad u_0 := u_{0 \, \delta \xi}, \quad w_0 := w_{0 \, \delta \xi}.
\end{align}

As a first approximation to our solution we  wish to compare the geometry of $G$ with that of $G_0$. To do this we  normalize the derivatives of $G_0$ by taking
\begin{equation} \label{G_0Normalized}
\tilde{\nabla} G_0^{(k)} (s, \theta) : = e^{- \delta  \xi \theta}  \nabla G_0 (s, \theta).
\end{equation}

\begin{lemma} \label{close_to_a_straight_line} For all $k \in \mathbb Z^+$ there exists $C$ independent of $k$ so that
\begin{align*}
\left| \tilde{\nabla}^{(k) }G (s, \theta)- \tilde{\nabla}^{(k)} G_0(s, \theta)\right| < C \delta |\RRR| \cosh (s).
\end{align*}
\end{lemma}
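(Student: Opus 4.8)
The plan is to compare the two immersions by writing $G = M \circ F$ and exploiting the explicit form of $M$ from \eqref{Mdef}, together with the explicit form of $G_0$ from \eqref{G_0Def}. First I would recall that after the normalization by $e^{-\delta\xi\theta}R(\theta)$, the map $M$ becomes, in local coordinates, the identity map plus an exponentially growing but (after normalization) $2\pi$-periodic correction: precisely, since $\gamma'(z) = e^{\delta\xi z}\e_3(z)$ and $\e'(z) = \delta\RRR\,\e(z)$, integrating the frame equations gives $\e_i(\theta) = e^{\delta\theta\RRR}\e_i(0)$, so that $R(\theta)$ and the vector $\gamma(\theta)$ differ from their ``$\RRR = 0$'' counterparts (which produce exactly $G_0$) by terms controlled by $\delta|\RRR|$ times the relevant power of $\cosh(s)$ coming from the helicoid coordinate $F(s,\theta) = \sinh(s)\sin\theta\,\e_x + \sinh(s)\cos\theta\,\e_y + \theta\,\e_z$. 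The key structural point, already used implicitly in Section \ref{GeometricQuantitiesOnG}, is that $\tilde\nabla^{(k)}G_0 = e^{-\delta\xi\theta}\nabla^{(k)}G_0$ agrees with the ``$\RRR\to 0$ limit'' of $\tilde\nabla^{(k)}G$, so the entire difference is an $\RRR$-error.

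The main steps, in order: (1) Expand $\e_i(\theta) = e^{\delta\theta\RRR}\e_i(0)$ and estimate $|e^{\delta\theta\RRR} - I| \le C\delta|\RRR||\theta|$ together with $|\frac{d^k}{d\theta^k}e^{\delta\theta\RRR}| \le (\delta|\RRR|)^k e^{\delta|\RRR||\theta|} \le C\delta|\RRR|$ on the relevant $\theta$-range (recall the standing assumption $\delta(1 + |\RRR| + |\xi|) < \tilde\epsilon$, which keeps $\delta|\RRR||\theta|$ bounded, and note $|\theta| \le C\cosh(s)$ is irrelevant here since $\theta$ is not the large variable — rather $\sinh(s)$ is). (2) Write $\nabla^{(k)}G(s,\theta) = \nabla^{(k)}\big(M\circ F\big)$ via the chain rule, using the explicit derivative formula \eqref{CC2} for $DM$, which is $e^{\delta\xi z}$ times $I$ plus terms linear in $\delta$; the higher derivatives $D^{(j)}M$ for $j \ge 2$ carry extra factors of $\delta$. (3) Multiply by $e^{-\delta\xi\theta}R(\theta)$ and subtract the corresponding expression for $G_0$; every surviving term contains either a factor $R(\theta) - (\text{planar rotation by }\delta\rho_0\theta \text{ or identity})$, or a factor $\gamma(\theta)$ minus its $\RRR = 0$ analogue, or one of the $\delta\,x\,\e_z^*\otimes\e_1$-type terms of \eqref{CC2} evaluated at $x = \sinh(s)\sin\theta$, $y = \sinh(s)\cos\theta$ — and each of these is bounded by $C\delta|\RRR|\cosh(s)$ because the worst growth in the $F$-coordinates is $\sinh(s) \le \cosh(s)$ (the $z = \theta$ component contributes no growth). (4) Collect terms and observe the bound is uniform in $k$ because, after normalization, each additional $\theta$-derivative either hits the bounded periodic factor $\tilde\nabla G_0$ or produces another factor of $\delta|\RRR|$ or $\delta|\xi|$, all $\le 1$.

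The main obstacle is bookkeeping rather than a conceptual difficulty: one must organize the chain-rule expansion of $\nabla^{(k)}(M\circ F)$ so that it is transparent that \emph{every} term not already present in $\tilde\nabla^{(k)}G_0$ carries a factor of $\delta|\RRR|$, and that the accompanying growth is at most $\cosh(s)$ and not, say, $\cosh^2(s)$ or $|\theta|\cosh(s)$. The cleanest route is to first prove the estimate for $G$ and $G_0$ themselves ($k=0$ is essentially $|M(x,y,\theta) - M_0(x,y,\theta)|$ where $M_0$ is the $\RRR=0$ map, controlled by $|\gamma(\theta) - \gamma_0(\theta)| + |R(\theta) - R_0(\theta)|\sinh(s) \le C\delta|\RRR|\cosh(s)$ using $|\theta|$ bounded), then differentiate: since the normalized maps $\tilde\nabla G$ and $\tilde\nabla G_0$ are both $2\pi$-periodic in $\theta$ (Proposition \ref{normalized_derivative_properties}), and the frame exponential $e^{\delta\theta\RRR}$ and all its $\theta$-derivatives are bounded by $C$ on one period with the $k$-th derivative additionally bounded by $(\delta|\RRR|)^{\min(k,1)}C$, the periodicity reduces everything to a single fundamental domain where the estimate is elementary. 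I would also remark that the constant's independence of $k$ reflects the real-analyticity of all objects involved, but for the stated bound only the crude estimates above are needed.
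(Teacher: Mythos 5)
Your steps (1), (2), (4) are in essence the paper's argument: compare $G=M\circ F$ with $G_0$ derivative by derivative, let the normalization $e^{-\delta\xi\theta}R(\theta)$ cancel the dilation and the frame rotation, and observe that every surviving term comes from a derivative hitting the frame (producing $\delta\RRR\e_i$) and grows at most like $\sinh(s)$. In fact the paper's computation is cleaner than your plan suggests: pure $s$-derivatives cancel exactly after applying $R(\theta)$, and the normalized $\theta$-derivative difference is \emph{exactly} $\delta\sinh(s)R(\theta)(\RRR\e_r)$, which is bounded by $\delta|\RRR|\cosh(s)$ for every $\theta\in\Real$; no bound of the form $|e^{\delta\theta\RRR}-I|\le C\delta|\RRR||\theta|$, and hence no reduction to a fundamental domain via periodicity, is needed (though that reduction is legitimate for the normalized derivatives, which are indeed $2\pi$-periodic). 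Higher $k$ then follows inductively using $\delta|\RRR|<1$, exactly as you say.

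Two concrete points in your write-up are wrong and should be removed. First, in step (3) you list ``a factor $\gamma(\theta)$ minus its $\RRR=0$ analogue'' among the surviving terms. No such factor appears when $k\ge 1$: only $\gamma'(\theta)=e^{\delta\xi\theta}\e_3(\theta)$ enters, and after normalization it coincides exactly with the corresponding term of $G_0$. This is fortunate, because $\gamma(\theta)-\gamma_0(\theta)$ is \emph{not} small (already at $\theta=0$ the two curves sit at distance of order $1/\delta$, and the difference is not $2\pi$-periodic, so your fundamental-domain reduction would not apply to it). Second, and for the same reason, your proposed ``cleanest route'' --- prove the $k=0$ estimate for $G-G_0$ first and ``then differentiate'' --- fails twice over: the $k=0$ estimate $|\tilde G-\tilde G_0|\le C\delta|\RRR|\cosh(s)$ is false (the lemma deliberately concerns only derivatives), and in any case one cannot differentiate an inequality to obtain estimates on derivatives; each order $k$ must be handled by its own computation, which is what the paper's induction does.
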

\begin{proof} Notice first that $\delta|\RRR| <1$. By definition, 
\[
R(\theta) \frac{\partial^k}{\partial s^k} G = \frac{\partial^k}{\partial s^k} G_0
\]so any differentiation only in $s$ will vanish in the difference. Let $\e_r:=\sin \theta \e_1 + \cos \theta \e_2$ and $\e_r^\perp:= \cos \theta \e_1 - \sin \theta \e_2$. Then
\begin{align*}
(G_0)_\theta &= e^{\delta \xi \theta} R(\theta)\e_3+e^{\delta \xi \theta} \sinh(s) R(\theta)\e_r^\perp + \delta \xi e^{\delta \xi \theta} \sinh (s)R(\theta)\e_r\\
 G_\theta &= e^{\delta \xi \theta} \e_3  +e^{\delta \xi \theta} \sinh(s) \e_r^\perp + \delta \xi e^{\delta \xi \theta} \sinh(s) \e_r  + \delta e^{\delta \xi \theta} \sinh(s) \RRR\e_r
 \end{align*}and thus
 \[
 e^{-\delta \xi \theta} \left(R(\theta)G_\theta - (G_0)_\theta \right)= \delta \sinh(s) R(\theta) (\RRR \e_r).
 \]
 This immediately proves the estimates for $k=1$. The higher order estimates follow inductively since $0<\delta |\RRR| < 1$.
\end{proof}
From Lemma \ref{close_to_a_straight_line}, we can obtain a good estimate for the mean curvature of $G_{w_0}$, the normal graph over $G$ by the function $w_0$. Since $(G_{0})_{  w_0} $ is a minimal surface, we expect that the failure of  $G_{w_0}$ to be minimal is controlled by the geometry of the modeling curve for $G$ and the scale $\delta$. We first need to compare the unit normal field along along the immersions. 

To demonstrate that we may use Proposition \ref{HQEstimates} to estimate geometric quantities of small graphs over $G$ we record the following lemma.
\begin{lemma} \label{small_graphs_give_regular_immersions}
The following statements hold:
\begin{itemize}
\item[(1)] There exists $C>0$ independent of $G$ such that 
\begin{align} \notag
C^{-1}\cosh(s) \leq |\tilde{\nabla} [G]| \leq C \cosh(s)
\end{align}
\item[(2)] Recalling the definition of $\ell_{j, \alpha}$ from Proposition \ref{HQEstimates},
\begin{align} \notag
\ell_{j, \alpha} (\tilde{\underline{\nabla}} [G]) \leq C(j, \alpha)
\end{align}
\item[(3)] There exists $C>0$ independent of $G$ such that 
\begin{align}\notag
1 - [\mathfrak{a}(\tilde{\nabla} [G])]  \leq C \delta \left( |\RRR| + |\xi| \right)
\end{align}
\end{itemize}
\end{lemma}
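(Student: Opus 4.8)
The plan is to verify each of the three estimates directly from the explicit formulas for $\tilde\nabla[G]$ developed via Lemma \ref{close_to_a_straight_line} and the geometry of the helicoid embedding $F$, using the triangle inequality together with the known corresponding estimates for $G_0$ (equivalently, for $F$ itself when $\xi$ is small).

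\textbf{Step 1: the two-sided bound on $|\tilde\nabla[G]|$.} First I would record the exact expressions for $\tilde\nabla G$. Since $R(\theta)\partial_s^k G = \partial_s^k G_0$, the $s$-derivatives of $\tilde\nabla G$ agree with those of $\tilde\nabla G_0$, and $\tilde\nabla G_\theta$ differs from $\tilde\nabla (G_0)_\theta$ by exactly $\delta\sinh(s)R(\theta)(\RRR\e_r)$, as computed in Lemma \ref{close_to_a_straight_line}. For the helicoid, $|F_s|^2 = |F_\theta|^2 = \cosh^2(s)$ and $F_s\cdot F_\theta = 0$, so $|\tilde\nabla F|$ is comparable to $\cosh(s)$; the same holds for $\tilde\nabla G_0$ since $G_0$ is a rigid motion plus exponentially small terms (or one invokes the $\xi\to 0$ remark and compares directly to $F$). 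Then Lemma \ref{close_to_a_straight_line} with $k=1$, $k=2$ gives $|\tilde\nabla[G] - \tilde\nabla[G_0]| \le C\delta|\RRR|\cosh(s)$, and since $\delta|\RRR| < 1$ (indeed $\le\tilde\epsilon$) this perturbation is absorbed, yielding $C^{-1}\cosh(s) \le |\tilde\nabla[G]| \le C\cosh(s)$.

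\textbf{Step 2: the bound on $\ell_{j,\alpha}(\tilde{\underline\nabla}[G])$.} Recall $\ell_{j,\alpha}(\nabla) = \|\nabla : C^{j,\alpha}(D,|\nabla|)\|$, i.e.\ one must control the $C^{j,\alpha}$ norm of $\tilde{\underline\nabla}[G]$ on unit balls, divided by $|\tilde\nabla[G]|$. Because $\tilde\nabla[G]$ is built from $\sinh(s)$, $\cosh(s)$, trigonometric functions of $\theta$, the bounded matrix $\RRR$, and the $2\pi$-periodic (hence bounded-derivative) frame data $\e_i(\theta)$, all of its derivatives of any fixed order are bounded by $C(j,\alpha)\cosh(s)$ on a unit ball around any point (using that $\cosh$ and its derivatives are mutually comparable on unit intervals). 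Dividing by the lower bound $C^{-1}\cosh(s)$ from Step 1 gives $\ell_{j,\alpha}(\tilde{\underline\nabla}[G]) \le C(j,\alpha)$. This is where one uses that normalization by $e^{-\delta\xi\theta}R(\theta)$ was designed precisely to kill the exponential growth, leaving a quantity with uniformly controlled H\"older geometry.

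\textbf{Step 3: the conformality defect $1 - \mathfrak a(\tilde\nabla[G])$.} Here I would use that $F$ is a conformal immersion, so $\mathfrak a(\tilde\nabla F) = 1$ by the Lemma characterizing $\mathfrak a = 1$. The quantity $1 - \mathfrak a$ is a smooth homogeneous degree-$0$ function vanishing at conformal frames, so by the mean value / Taylor remainder machinery (Proposition \ref{still_regular} or a direct computation of $D\mathfrak a$), $|1 - \mathfrak a(\tilde\nabla[G])| = |\mathfrak a(\tilde\nabla F) - \mathfrak a(\tilde\nabla[G])| \le C|\tilde\nabla F - \tilde\nabla[G]|/|\tilde\nabla F|$. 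It remains to estimate $|\tilde\nabla F - \tilde\nabla[G]|$. The $s$-derivative parts agree with those of $G_0$ versus $F$, which differ by $O(\delta|\xi|\cosh(s))$ coming from the $e^{\delta\xi\theta}$-factor and the $\tfrac1\delta e^{\delta\xi\theta}\e_z$ term in $G_0$ (here the $\xi\to 0$ remark makes the size $\delta|\xi|$ transparent); the $\theta$-derivative part additionally picks up the $\delta\sinh(s)R(\theta)\RRR\e_r$ term of size $\delta|\RRR|\cosh(s)$. Combining, $|\tilde\nabla F - \tilde\nabla[G]| \le C\delta(|\RRR| + |\xi|)\cosh(s)$, and dividing by $|\tilde\nabla F| \ge C^{-1}\cosh(s)$ gives the claim.

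\textbf{Main obstacle.} None of the steps is deep; the only real care is in Step 3, in cleanly tracking that the two independent sources of non-conformality — the exponential scaling factor (size $\delta|\xi|$) and the bending of the frame by $\RRR$ (size $\delta|\RRR|$) — combine to the stated $\delta(|\RRR|+|\xi|)$, and in handling the $\xi = 0$ degeneration where one must compare against $F$ rather than $G_0$. A secondary bookkeeping point is ensuring the constant $C$ in Step 2 genuinely does not depend on $G$ (only on $j,\alpha$), which follows because every $G$-dependent ingredient — $\RRR/|\RRR|$-type factors, the frame $\e(\theta)$, and $\delta$ itself — enters bounded by absolute constants under the running hypothesis $\delta(1+|\RRR|+|\xi|) < \tilde\epsilon$.
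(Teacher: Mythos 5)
Your proposal is correct and follows essentially the same route as the paper: items (1) and (2) from Lemma \ref{close_to_a_straight_line} together with the corresponding estimates for $F$ (and $G_0$), and item (3) by bounding $|\tilde{\nabla}G_0 - \nabla F| \leq C\delta|\xi|\cosh(s)$, applying the triangle inequality with Lemma \ref{close_to_a_straight_line}, and invoking $\mathfrak{a}[F]=1$ with Proposition \ref{still_regular}. The paper's proof is just a terser version of exactly this argument.
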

\begin{proof}
Items $(1)$ and $(2)$ are direct consequences of Lemma \ref{close_to_a_straight_line} and the corresponding estimates for $F$. To prove item $(3)$, first note that
\[
|\tilde \nabla^{(k)} G_0 -\nabla^{(k)} F| \leq C \delta |\xi| \cosh(s).
\] The result now follows from the triangle inequality, Lemma \ref{close_to_a_straight_line}, Proposition \ref{still_regular} and the fact that $\mathfrak{a} [F] = 1$.
\end{proof}

\begin{lemma} \label{unit_normal_difference}
For any $j \in \mathbb Z^+$
\begin{align*}
\left\|\tilde \nu_G (s, \theta) - {\nu}_{G_0}(s, \theta): C^{j, \alpha} (\Real^2) \right\| < C (j, \alpha)  |\RRR| \delta.
\end{align*}
\end{lemma}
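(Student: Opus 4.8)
The plan is to deduce this from Lemma~\ref{close_to_a_straight_line} together with Proposition~\ref{HQEstimates}, using that the unit normal $\nu$ is a homogeneous degree $0$ quantity. The key point is that $\tilde\nu_G = \nu(\tilde{\underline\nabla}[G])$ and $\nu_{G_0} = \nu(\tilde{\underline\nabla}[G_0])$ are the \emph{same} homogeneous function $\nu$ evaluated at two nearby immersions, so the difference is exactly a Taylor remainder $R^{(0)}_{\nu,\mathcal E}(\tilde{\underline\nabla}[G_0])$ with variation field $\mathcal E := \tilde{\underline\nabla}[G] - \tilde{\underline\nabla}[G_0]$.

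First I would record the needed inputs. By Lemma~\ref{close_to_a_straight_line} (and the accompanying statement $|\tilde\nabla^{(k)}G_0 - \nabla^{(k)}F|\le C\delta|\xi|\cosh s$ from the proof of Lemma~\ref{small_graphs_give_regular_immersions}), the field $\mathcal E$ satisfies a pointwise bound $|\mathcal E|\le C\delta|\RRR|\cosh s$; moreover one checks the higher localized Hölder seminorms of $\mathcal E$ obey the same bound, since differentiating the explicit expressions in the proof of Lemma~\ref{close_to_a_straight_line} only produces further bounded trigonometric/hyperbolic factors and powers of $\delta|\RRR|<1$. Since $|\tilde\nabla[G_0]|$ (equivalently $|\nabla F|$) is comparable to $\cosh s$ and $\mathfrak a(\tilde\nabla[G_0])$ is bounded below (it is close to $\mathfrak a[F]=1$ by Proposition~\ref{still_regular}), this gives
\[
\|\mathcal E : C^{j,\alpha}(\Real^2,\, \mathfrak a(\nabla G_0)|\nabla G_0|)\| \le C\delta|\RRR| \quad\text{and}\quad \|\mathcal E : C^{j,\alpha}(\Real^2,|\nabla G_0|)\| \le C\delta|\RRR|,
\]
and also $\ell_{j,\alpha}(\tilde{\underline\nabla}[G_0])\le C(j,\alpha)$ by the argument of Lemma~\ref{small_graphs_give_regular_immersions}(2). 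Thus, for $\delta(1+|\RRR|+|\xi|)$ small enough the hypotheses of Proposition~\ref{HQEstimates} are met.

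Next I would apply Proposition~\ref{HQEstimates} with $\Phi = \nu$, $k=0$, $d=0$: this yields
\[
\|\tilde\nu_G - \nu_{G_0} : C^{j,\alpha}(\Real^2, |\nabla G_0|^0)\| = \|R^{(0)}_{\nu,\mathcal E}(\tilde{\underline\nabla}[G_0]) : C^{j,\alpha}(\Real^2)\| \le C\,\|\mathcal E : C^{j,\alpha}(\Real^2,|\nabla G_0|)\| \le C(j,\alpha)\,\delta|\RRR|,
\]
which is the claim (note $|\nabla G_0|^0\equiv 1$, so the weighted norm is the ordinary $C^{j,\alpha}(\Real^2)$ norm). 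The only mild subtlety — and the step I expect to cost the most care — is verifying that the Hölder seminorms of $\mathcal E$, not merely its sup norm, are controlled by $C\delta|\RRR|$ without an extra $\cosh s$ weight after dividing by $|\nabla G_0|$; this is where one uses that every term in $G_\theta - R(\theta)^{-1}(G_0)_\theta$ carries a factor $\delta\RRR\e_r$ whose $s$- and $\theta$-derivatives remain comparable to $\cosh s$, exactly as in the inductive step of Lemma~\ref{close_to_a_straight_line}. Everything else is the bookkeeping of Proposition~\ref{HQEstimates}.
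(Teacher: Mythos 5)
Your proposal is correct and follows essentially the same route as the paper: the paper's proof likewise applies Proposition \ref{HQEstimates} with $\Phi=\nu$, $k=0$, $d=0$, using Lemma \ref{close_to_a_straight_line} to bound the variation field given by the difference of the normalized derivatives (the paper takes the base point to be $\tilde{\underline{\nabla}}G$ rather than $\tilde{\underline{\nabla}}G_0$, an immaterial difference). Your extra attention to the H\"older seminorms of $\mathcal E$ is exactly the bookkeeping the paper leaves implicit.
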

\begin{proof}First note that $\delta(1 +  |\RRR|+\xi) < \tilde \epsilon$ of Proposition \ref{HQEstimates}. Thus for $\nu$, we can apply Proposition \ref{HQEstimates} with $k=0$ and $d=0$. The result then follows by Lemma \ref{close_to_a_straight_line} and Proposition \ref{HQEstimates} with $\underline{\nabla} = \tilde{\underline{\nabla} }G $ and $\mathcal{E}  = \tilde{ \underline{\nabla} }G - \tilde{\underline{\nabla} }G_0$.
\end{proof}

We will exploit the periodicity of the helicoid and consider graphs over $G=M \circ F$ that are periodic in $\theta$. To that end, we define the appropriate quotient space of $\Real^2$. 
\begin{definition}
Let $\Omega$ be the quotient of $\Real^2$ by the translation $(s, \theta) \mapsto (s, \theta + 2 \pi)$.
\end{definition}

Given an immersion $G$, we look for functions $u:\Omega \to \Real$ such that $G+ u\nu_G$ is minimal. Because of the homogeneity of $H$ and its invariance under rotations, we consider variation fields of the following form.
\begin{definition} \label{FunctionCommutatorDef}
Given  a function $u: \Omega \rightarrow \Real$, we set
\begin{align}
\mathcal{E}_{\delta } [u] : = e^{- \delta  \xi \theta}  R(\theta)\underline{\nabla} (e^{\delta \xi   \theta}u \nu_G).
\end{align}
\end{definition}
The self-similarity of $\gamma$ allows us to consider the mean curvature up to the natural localized rotation and dilation of $G$ by $e^{-\delta\xi\theta}R(\theta)$. To that end, we define the map $Q$. 
\begin{definition}
The map $Q [u]: C^2 (D) \rightarrow C^0 (D)$ is  given as follows:
\begin{align}\label{Qdef}
Q[u] &: = e^{\delta  \xi \theta} \cosh^2(s) H(\underline{\nabla} [G ]+ \underline{\nabla} [e^{\delta \xi \theta}( u + u_0) \nu_G]) \\ \notag
& = \cosh^2 (s) H(\tilde{\underline{\nabla}} [G] + \mathcal{E}_{\delta} [u + u_0]).
\end{align}
\end{definition}
An important consequence of the definition is that $Q$ preserves the periodicity property.
\begin{lemma}\label{Qlemma}
$Q$ maps the space $C^2 (\Omega)$ into $C^0 (\Omega)$.
\end{lemma}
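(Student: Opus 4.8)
The plan is to trace through the definition of $Q$ and check that each building block of the formula
\[
Q[u](s,\theta) = \cosh^2(s)\, H\bigl(\tilde{\underline{\nabla}}[G] + \mathcal{E}_\delta[u+u_0]\bigr)
\]
is $2\pi$-periodic in $\theta$, so that the composition is as well. There are four ingredients: the prefactor $\cosh^2(s)$, the normalized derivatives $\tilde{\underline{\nabla}}[G]$, the variation field $\mathcal{E}_\delta[u+u_0]$, and the homogeneous function $H$ evaluated on their sum. The first is trivially independent of $\theta$. The second is exactly the content of Proposition \ref{normalized_derivative_properties}\eqref{1}: $\tilde{\nabla}G$, hence $\tilde{\underline{\nabla}}[G]=(\tilde\nabla G,\tilde\nabla^2 G)$, is $2\pi$-periodic in $\theta$.

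For the variation field, recall $\mathcal{E}_\delta[v] = e^{-\delta\xi\theta}R(\theta)\underline{\nabla}(e^{\delta\xi\theta}v\,\nu_G)$. I would first observe that $v := u+u_0$ is itself $2\pi$-periodic in $\theta$: $u\in C^2(\Omega)$ by hypothesis, and $u_0=u_{0\,\delta\xi}(s)$ depends only on $s$ by Theorem \ref{BK1Theorem}. Next, by Proposition \ref{normalized_derivative_properties}\eqref{2} we have $\nu_G = R^{-1}(\theta)\tilde\nu_G$ with $\tilde\nu_G$ a homogeneous degree-$0$ quantity in the $2\pi$-periodic data $\tilde{\underline{\nabla}}[G]$, hence $\tilde\nu_G$ is $2\pi$-periodic. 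Writing $e^{\delta\xi\theta}v\,\nu_G = e^{\delta\xi\theta}v\, R^{-1}(\theta)\tilde\nu_G$ and differentiating, the product rule produces terms where $\partial_\theta$ hits $e^{\delta\xi\theta}$ (giving a factor $\delta\xi$), hits $v$, hits $R^{-1}(\theta)$ (giving $-\delta\RRR R^{-1}(\theta)$ by \eqref{frame_derivative}, equivalently via $R'(\theta)$), or hits $\tilde\nu_G$. After multiplying on the left by $e^{-\delta\xi\theta}R(\theta)$, every surviving factor is one of: a constant ($\delta$, $\xi$, entries of $\RRR$), a $\theta$-derivative of the $2\pi$-periodic functions $v$ or $\tilde\nu_G$, or a term of the shape $R(\theta)R^{-1}(\theta)=\mathrm{Id}$ or $R(\theta)(\RRR R^{-1}(\theta))$ which is the $\theta$-independent matrix $\RRR$ expressed in the moving frame and is constant. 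Thus $\mathcal{E}_\delta[u+u_0]$ is $2\pi$-periodic in $\theta$. Finally, $H$ is a fixed smooth homogeneous function on $E_0$, so applying it to the $2\pi$-periodic $E$-valued map $\tilde{\underline{\nabla}}[G]+\mathcal{E}_\delta[u+u_0]$ (which takes values in $E_0$ for $\delta$ small, by Lemma \ref{small_graphs_give_regular_immersions} and Proposition \ref{still_regular}) yields a $2\pi$-periodic function; multiplying by $\cosh^2(s)$ preserves this, and the regularity $C^0$ follows since $G\in C^2$. Hence $Q$ maps $C^2(\Omega)$ to $C^0(\Omega)$.

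The only mildly delicate point — the "main obstacle," though it is routine — is verifying that the factors of $e^{\pm\delta\xi\theta}$ and the frame-rotation matrices $R(\theta)$ cancel completely rather than leaving a residual $\theta$-dependence; this is precisely why the normalization $e^{-\delta\xi\theta}R(\theta)$ was built into both $\tilde{\underline{\nabla}}[G]$ and $\mathcal{E}_\delta$, and it is the reason Definition \ref{FunctionCommutatorDef} and Definition \ref{GNormalized} are phrased the way they are. Once one checks that $R(\theta)\partial_\theta R^{-1}(\theta)$ is the constant matrix $-\delta\RRR$ (an immediate consequence of \eqref{frame_derivative}), every term organizes into a product of constants and derivatives of genuinely $2\pi$-periodic quantities, and periodicity of $Q[u]$ is forced.
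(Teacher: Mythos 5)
Your proposal is correct and follows essentially the same route as the paper: reduce the claim to periodicity of $\mathcal{E}_\delta[u+u_0]$, use that $\tilde{\underline{\nabla}}[G]$ and hence $\tilde\nu_G$ are $2\pi$-periodic, and observe that $\theta$-differentiation interacts with the normalization only through constant matrices (your identities $R(\theta)\partial_\theta R^{-1}(\theta)=\mathrm{const}$ and $R(\theta)\RRR R^{-1}(\theta)=\mathrm{const}$ are exactly the paper's statements that $R(\theta)\e_i$, $R'(\theta)\e_i$, $R''(\theta)\e_i$ are $\theta$-independent, just phrased by conjugation rather than by expanding $\nu_G=\sum_i\alpha_i\e_i$ in the frame). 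The sign you hedge on in $\partial_\theta R^{-1}(\theta)$ is immaterial, since only constancy is used.
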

\begin{proof}

Since we have already verified in Proposition \ref{normalized_derivative_properties}, item $(1)$, that $\tilde{\underline{\nabla}} G$ is periodic, the preservation of periodicity follows once we verify that $\mathcal{E}_{\delta} (u)$ maps periodic functions to periodic functions. Item $(1)$ of Proposition \ref{normalized_derivative_properties} also implies $\tilde \nu_G$ is $2\pi$ periodic in $\theta$ and thus, all derivatives of $\tilde \nu_G$ are $2\pi$ periodic in $\theta$. By direct calculation, it is enough to show that $R(\theta) \partial_\alpha \nu_G$ is $2\pi$ periodic for $\alpha = s, \theta, ss, s\theta, \theta \theta$. 

First, note that the vector $R(\theta)\e_i$ is independent of $\theta$. Since $\RRR$ is fixed, and
\begin{equation}\label{theta_ind_eq}
R'(\theta)\e_i = (R(\theta) \e_i)' -R(\theta) \delta \RRR \e_i=-R(\theta) \delta \RRR \e_i,
\end{equation}we note that $R'(\theta)\e_i$ is independent of $\theta$. A similar calculation with second derivatives immediately implies $R''(\theta) \e_i$ is also $\theta$ independent. Since we will need an estimate on $|R''(\theta)|$ later, we record here
\begin{equation}\label{theta_ind_eq_2}
0=\left(R(\theta)\e_i\right)''=R''(\theta) \e_i + 2R'(\theta) \e_i' + R(\theta)\e_i''=\left(R''(\theta) + 2R'(\theta)\delta\RRR + R(\theta)\delta^2\RRR^2\right)\e_i
\end{equation}Now, suppose $\nu_G = \sum_i \alpha_i \e_i$. Then since $\tilde \nu_G = R(\theta) \nu_G$, the $\alpha_i$ are all $2\pi$ periodic in $\theta$. Moreover, one quickly calculates
\begin{align*}
\left(\tilde \nu_G\right)_s - R(\theta)\left(\nu_G\right)_s &= \left(\tilde \nu_G\right)_{ss} - R(\theta)\left(\nu_G\right)_{ss}=0\\
\left(\tilde \nu_G\right)_\theta - R(\theta)\left(\nu_G\right)_\theta&= \sum_i \alpha_i R'(\theta)\e_i\\
\left(\tilde \nu_G\right)_{s\theta} - R(\theta)\left(\nu_G\right)_{s\theta}&= \sum_i (\alpha_i)_s R'(\theta)\e_i\\
\left(\tilde \nu_G\right)_{\theta\theta} - R(\theta)\left(\nu_G\right)_{\theta\theta}&= \sum_i\left( 2(\alpha_i)_\theta R'(\theta) \e_i+ 2 \alpha_iR'(\theta) \delta \RRR \e_i+ \alpha_i R''(\theta) \delta \RRR \e_i\right).
\end{align*}Since all terms on the right are $2\pi$ periodic in $\theta$ and all derivatives of $\tilde \nu_G$ are $2\pi$ periodic in $\theta$, $Q$ preserves periodicity.

\end{proof}
We use the estimates of Lemmas \ref{small_graphs_give_regular_immersions}, \ref{unit_normal_difference} , and \ref{Qlemma} to determine estimates for the mean curvature of $G_{w_0}$ which by definition correspond to estimates on $Q[0]$. This bound will appear again in the fixed point argument of Section \ref{FindingExactSolutions}.
\begin{proposition} \label{normalized_problem_properties}
For $\Omega^*:= \Omega \cap \{s: |s| \leq \epsilon (\delta \xi)^{-1/4}\}$,
\begin{align} \label{NPP1}
\left\| Q [0]: C^{j, \alpha} (\Omega^*,  \cosh (s)) \right\| <  C (j, \alpha) \delta|\RRR|.
\end{align}
\end{proposition}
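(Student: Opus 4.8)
The plan is to express $Q[0]$ as a Taylor remainder about a minimal immersion and then invoke Proposition~\ref{HQEstimates}. Recall from \eqref{Qdef} that $Q[0] = \cosh^2(s)\, H\!\left(\tilde{\underline{\nabla}}[G] + \mathcal{E}_\delta[u_0]\right)$. The key point is that the normal graph over $G_0$ by $w_0$ is minimal by Theorem~\ref{BK1Theorem}, so that $H\!\left(\underline{\nabla}[G_0] + \underline{\nabla}[w_0\nu_{G_0}]\right) \equiv 0$; after normalizing by $e^{-\delta\xi\theta}$ and using the homogeneity of $H$ (degree $-1$) this says $H\!\left(\tilde{\underline{\nabla}} G_0 + \mathcal{E}^{(0)}\right) = 0$ where $\mathcal{E}^{(0)}$ is the corresponding normalized variation field built from $w_0$ and $\nu_{G_0}$. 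So the first step is to write
\[
Q[0] = \cosh^2(s)\Big( H(\tilde{\underline{\nabla}} G + \mathcal{E}_\delta[u_0]) - H(\tilde{\underline{\nabla}} G_0 + \mathcal{E}^{(0)}) \Big),
\]
and to think of the right-hand side as the difference of $H$ evaluated at two nearby immersions, hence controllable by the first-order Taylor estimate of Proposition~\ref{HQEstimates} with $k=0$ and $d=-1$ (the $\cosh^2(s)$ factor compensates the $|\nabla|^{-1}\approx\cosh^{-1}(s)$ weight, leaving the final weight $\cosh(s)$, so the desired estimate $\|Q[0]:C^{j,\alpha}(\Omega^*,\cosh(s))\|$ is exactly what Proposition~\ref{HQEstimates} delivers on the difference).

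The second step is to estimate the variation field $\mathcal{E} := (\tilde{\underline{\nabla}} G + \mathcal{E}_\delta[u_0]) - (\tilde{\underline{\nabla}} G_0 + \mathcal{E}^{(0)})$ in $C^{j,\alpha}(\Omega^*, \cosh(s))$ and show it is $\lesssim \delta|\RRR|$ (plus, a priori, $\delta|\xi|$-type terms, which I address below). This splits into two pieces. The difference $\tilde{\underline{\nabla}} G - \tilde{\underline{\nabla}} G_0$ is bounded by $C\delta|\RRR|\cosh(s)$ by Lemma~\ref{close_to_a_straight_line}. The difference $\mathcal{E}_\delta[u_0] - \mathcal{E}^{(0)}$ involves (i) the difference of the unit normals $\tilde\nu_G - \nu_{G_0}$, bounded by $C|\RRR|\delta$ via Lemma~\ref{unit_normal_difference}, and (ii) the fact that $\mathcal{E}_\delta[u_0]$ carries derivatives of $R(\theta)$ and of $e^{\delta\xi\theta}$, which produce extra factors of $\delta\xi$ and $\delta|\RRR|$ (using \eqref{theta_ind_eq} and \eqref{theta_ind_eq_2} to handle $R'(\theta)\e_i$, $R''(\theta)\e_i$); and (iii) the size of $u_0$ itself, which by Theorem~\ref{BK1Theorem}(3) satisfies $\|u_0:C^{j,\alpha}([0,\epsilon_0(\delta\xi)^{-1/4}],s^2)\|\leq C\delta|\xi|$, so $u_0$ and its graph-variation field are already $O(\delta|\xi|)$ and in particular small with the correct weight. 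Assembling these with the triangle inequality, and using the regularity bounds of Lemma~\ref{small_graphs_give_regular_immersions} to confirm $\ell_{j,\alpha}(\tilde{\underline{\nabla}} G)\leq C(j,\alpha)$ and $1-\mathfrak{a}(\tilde{\nabla} G)$ small (so the immersion hypotheses of Proposition~\ref{HQEstimates} hold on $\Omega^*$), one gets $\|\mathcal{E}:C^{j,\alpha}(\Omega^*,\cosh(s))\|\leq C(j,\alpha)\,\delta(|\RRR|+|\xi|)$, and then Proposition~\ref{HQEstimates} gives $\|Q[0]:C^{j,\alpha}(\Omega^*,\cosh(s))\|\leq C(j,\alpha)\,\delta(|\RRR|+|\xi|)$.

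The main obstacle is the bookkeeping needed to get $\delta|\RRR|$ rather than $\delta(|\RRR|+|\xi|)$ on the right-hand side, i.e.\ the claim that the $|\xi|$-contribution actually cancels. The mechanism is that $G_0$ is \emph{built} to absorb all the $\xi$-dependence: the immersion $G_{0\,\delta\xi}$ and its minimal perturbation $w_0$ already solve the problem exactly when $\RRR=0$, so the only genuinely new error in passing from $G_0$ to $G$ is the bending encoded by $\RRR$, which is the content of Lemma~\ref{close_to_a_straight_line} and Lemma~\ref{unit_normal_difference} (both of which are $O(\delta|\RRR|)$, with \emph{no} $\xi$ term). Concretely, one should organize the computation so that every term carrying a bare $\delta\xi$ is paired against the corresponding term in $H(\tilde{\underline{\nabla}} G_0 + \mathcal{E}^{(0)})=0$ and drops out, leaving only $\RRR$-weighted remainders; the $R'(\theta),R''(\theta)$ factors, being themselves $\theta$-independent and of size $\lesssim\delta|\RRR|$ by \eqref{theta_ind_eq}--\eqref{theta_ind_eq_2}, then only contribute at order $\delta|\RRR|$. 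Verifying this cancellation carefully — as opposed to the cruder bound $\delta(|\RRR|+|\xi|)$ — is where the real work lies; the rest is a direct application of the homogeneous-quantity machinery of Section~\ref{EstimatingHomogeneousQuantities}.
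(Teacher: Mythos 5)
Your proposal is correct and follows essentially the same route as the paper: subtract the identically vanishing quantity $\cosh^2(s)\,H(\tilde{\underline{\nabla}}G_0+\mathcal{E}_{0\,\delta}[u_0])$ (minimality of the graph of $w_0$ over $G_0$, plus homogeneity of $H$), view $Q[0]$ as the Taylor remainder $R^{(0)}_{H,\mathcal{E}}$ based at $\tilde{\underline{\nabla}}[G]+\mathcal{E}_\delta[u_0]$ with $\mathcal{E}=T_1+T_2$ controlled via Lemma \ref{close_to_a_straight_line}, Lemma \ref{unit_normal_difference}, \eqref{theta_ind_eq}--\eqref{theta_ind_eq_2} and Lemma \ref{small_graphs_give_regular_immersions}, and then apply Proposition \ref{HQEstimates} with $\Phi=H$, $d=-1$, $k=0$, exactly as in the paper. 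The ``cancellation'' you defer as the real work is precisely the paper's short term-by-term computation of $T_2=\mathcal{E}_{0\,\delta}[u_0]-\mathcal{E}_\delta[u_0]$ and is immediate rather than delicate: since both variation fields are built from the same $e^{\delta\xi\theta}u_0$, every component of the difference is a product of $u_0$ or its derivatives (possibly carrying harmless factors $\delta\xi\le 1$) with a difference $R(\theta)\partial^\beta\nu_G-\partial^\beta\nu_{G_0}$, each of size $C\delta|\RRR|$ by Lemma \ref{unit_normal_difference} together with \eqref{theta_ind_eq}--\eqref{theta_ind_eq_2}, so no bare $\delta|\xi|$ term survives and the stated $\delta|\RRR|$ bound follows directly.
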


\begin{proof}
Define $\mathcal{E}_{0  \delta} [u]  : = e^{- \delta   \xi \theta} \underline{\nabla} (e^{\delta \xi  \theta}u \nu_{G_0})
$. To prove the norm bound, we first write
\begin{align}\notag
Q [0] & = \cosh^2(s) H(\tilde{\underline{\nabla}} [G] + \mathcal{E}_{\delta} [u_0]) \\ \notag
& =  \cosh^2(s)\left( H(\tilde{\underline{\nabla}} [G] + \mathcal{E}_{\delta} [u_0]) - H(\tilde{\underline{\nabla}} G_0 + \mathcal{E}_{0  \delta} [u_0]) \right) \\ \notag
& =- \cosh^2(s) R^{(0)}_{H, \underline{\nabla}} (\mathcal{E}),
\end{align}
where $\underline{\nabla} : = \tilde{\underline{\nabla} }[G] +  \mathcal{E}_{\delta} [u_0]$ and $ \mathcal{E} =  \tilde{\underline{\nabla}} [G_0 - G] + \left( \mathcal{E}_{0\delta} [u_0] - \mathcal{E}_{  \delta} [u_0] \right) : = T_1 + T_2$. Recall that $T_1$ has been estimated in Lemma \ref{close_to_a_straight_line}:
\begin{align} \label{NPP5}
\| T_1\|_{j, \alpha}  \leq C (j, \alpha) \delta |\RRR| \cosh (s)
\end{align}
so we focus on the term
\begin{align*}
T_2 : = \mathcal{E}_{0\delta} [u_0] - \mathcal{E}_{  \delta} [u_0]=  e^{- \delta  \xi \theta} \underline{\nabla} (e^{\delta  \xi \theta} u_0   \nu_{G_0})-e^{- \delta \xi \theta} R(\theta) \underline{\nabla} (e^{\delta   \xi \theta} u_0   \nu_G).
\end{align*}
By direct computation, we determine the components of $T_2$. Projected onto $E^{(1)}$,
\begin{align*}
\left( (u_0)_s\left(R(\theta)\nu_G-\nu_{G_0}\right) + u_0\left( R(\theta)(\nu_G)_s - (\nu_{G_0})_s\right) , \delta\xi u_0\left( R(\theta)\nu_G-\nu_{G_0}\right) + u_0 \left(R(\theta)(\nu_G)_\theta - (\nu_{G_0})_\theta \right)\right)
\end{align*}and onto $E^{(2)}$
\begin{align*}
&\left((u_0)_{ss}\left(R(\theta)\nu_G-\nu_{G_0}\right)+ 2(u_0)_s \left( R(\theta)(\nu_G)_s - (\nu_{G_0})_s\right)+ u_0 \left( R(\theta)(\nu_G)_{ss} - (\nu_{G_0})_{ss}\right), \right.\\
&\delta\xi (u_0)_s \left(R(\theta)\nu_G-\nu_{G_0}\right)+ \delta\xi u_0\left( R(\theta)(\nu_G)_s - (\nu_{G_0})_s\right)+ (u_0)_s \left(R(\theta)(\nu_G)_\theta - (\nu_{G_0})_\theta \right)+ u_0 \left(R(\theta)(\nu_G)_{s\theta} - (\nu_{G_0})_{s\theta} \right),\\
&\left. (\delta\xi)^2u_0 \left(R(\theta)\nu_G-\nu_{G_0}\right)+ 2\delta\xi u_0 \left(R(\theta)(\nu_G)_\theta - (\nu_{G_0})_\theta \right)+ u_0 \left(R(\theta)(\nu_G)_{\theta\theta} - (\nu_{G_0})_{\theta\theta} \right)
\right).
\end{align*}
From Lemma \ref{unit_normal_difference}, since $\tilde \nu_G = R(\theta) \nu_G$ 
\[
\| R(\theta)\nu_G-\nu_{G_0}:C^0\| \leq C\delta|\RRR|.
\]
For the projection onto $E^{(1)}$, we use Lemma \ref{unit_normal_difference}, the triangle inequality, and \eqref{theta_ind_eq} to get, for $i \in \{s, \theta\}$,
\begin{equation}\label{NPP3}
|R(\theta)(\nu_G)_i-(\nu_{G_0})_i| \leq C \delta|\RRR|.
\end{equation}For the projection onto $E^{(2)}$, we observe first that for $i, j \in \{s,\theta\}$,
\[
\left(\tilde \nu_G\right)_{ij} - R(\theta) \left(\nu_G\right)_{ij} = \left(R(\theta)\right)_{ij} \nu_G + \left(R(\theta)\right)_i \left(\nu_G\right)_j + \left(R(\theta)\right)_j \left(\nu_G\right)_i.
\]Appealing to Lemma \ref{unit_normal_difference}, the triangle inequality, \eqref{theta_ind_eq} again coupled with \eqref{theta_ind_eq_2}, we observe that, since $\delta |\RRR| <1$,
\begin{equation}\label{NPP4}
|R(\theta)(\nu_G)_{ij}-(\nu_{G_0})_{ij}| \leq C \delta|\RRR|.
\end{equation}

Combining (\ref{NPP5}),  (\ref{NPP3}) and (\ref{NPP4}), and noting further that $0 \leq \delta |\xi| <1$,
\begin{align*}
\| \mathcal{E}\|_{j, \alpha}& \leq C(j, \alpha) \delta|\RRR|  \left( \cosh(s) + \| u_0\|_{j + 2, \alpha}  \right) \\ \notag
& \leq C (j, \alpha)\delta|\RRR| \left(\cosh(s) + s^{j+2} \right).
\end{align*}
Using the estimates from Lemma \ref{small_graphs_give_regular_immersions}, we apply Proposition \ref{HQEstimates} with $H= \Phi$ and $d = -1$.
\end{proof}

\section{The linear problem} \label{TheLinearProblem}
The goal of this section is to prove Proposition \ref{FlatTotalInverse} which shows the linear operator $\cosh^2(s)\mathcal L_F$ is invertible in appropriately weighted H\"older spaces, modulo a two dimensional space of exponentially growing functions. These weighted spaces will be defined on subsets of $\Omega$.
\begin{definition} \label{X_spaces_def}
Set
\begin{align}  \label{LambdaDef}
\Lambda : =  \Omega \cap \{ |s | \leq \arccosh( \ell) \}
\end{align}
where $\ell  \in   (2 ,  \cosh(\epsilon_0 (\delta|\xi|)^{-1/ 4}))$ is a constant to be determined and where $\epsilon_0$ is as in the statement of Theorem  \ref{BK1Theorem}. The upper bound for $\ell$ is the maximum scale on which the functions $u_0$ of \cite{BK} are defined. The main theorem does not allow such a generous upper bound for $\ell$, though $\ell$ should be considered as a large constant. The weighted spaces on which we solve the linear problem now take the following form.
\end{definition}

\begin{definition} \label{XSpaces}
Let $\mathcal{X}^{k}$, $k = 0, 2$ be the space of  functions $f(s, \theta)$ in $C^{k, 3/4}_{loc} (\Lambda) $ such that
\begin{align}
\| f : \mathcal{X}^{k}\| :  = \| f : C^{k, 3/4} ( \Lambda, \cosh^{3/4}(s))\| < \infty.
\end{align}

\end{definition}

Note that the spaces $\mathcal{X}^{k}$ are Banach spaces with norm $\| - : \mathcal{X}^{k}\|$.
Our main result will follow from the fact that linearized problem on the surface $F$ is invertible in the spaces $\mathcal{X}^{k}$, and the fact that we can treat the linearized problem on $G$ as a perturbation of the linearized problem on $F$.

For a locally class $C^2$ immersion $\phi  (s, \theta): D \rightarrow \mathbb{R}^3$, recall $\mathcal{L}_\phi$ denotes the \emph{stability operator}:
\begin{align}
\mathcal{L}_\phi : = \Delta_\phi + |A_\phi|^2
\end{align}
where $\Delta_\phi$ and $|A_\phi|^2$ denote the Laplace operator and the squared norm of the second fundamental form of $\phi$, respectively.


\subsection{Prescribing the kernel content of the error term}
In the spirit of Kapouleas, we solve the linear problem on $F$ by modifying an inhomogeneous $E \in \mathcal X^0$ so the modified function is $L^2$ orthogonal to the obstructions to invertibility. As these obstructions arise because of properties of $F$, we first record some relevant quantities.

Let $g_F$, $\nu_F$ and $A_F$ be the metric, the unit normal, and the second fundamental for $F$, respectively. Then
\begin{align}
g_F (s, \theta)  & = \cosh^2 (s) (ds^2 + d\theta^2) \notag\\ \label{FGeomData}
\nu_F (s, \theta) & = - \cosh^{-1} (s) \cos (\theta) \e_x + \cosh^{-1} (s) \sin(\theta) \e_y + \tanh (s) \e_z \\ \notag
A_F (s, \theta) & = - 2 ds d\theta\\
\cosh^2(s) \mathcal{L}_F &=  \Delta_{\Omega} + 2 \cosh^{-2} (s).\notag
\end{align}

Note that if $u: \Omega \to \Real$ then the periodicity of $F$ and $\nu_F$ imply that $u$ can be extended to a graph over the full helicoid. Moreover, to understand the behavior of $\mathcal L_F u$, it is enough to understand its behavior on a fundamental domain of the helicoid. That is, we can consider the problem only on $\Omega$ rather than on all of $\mathbb R^2$. 
In the same spirit, we can analyze the Gauss map $\nu_F: F \rightarrow \mathbb S^2$ on $\Omega$. On this subdomain, $\nu_F$ is a conformal diffeomorphism with conformal factor $|A_F|^2/2$ onto the punctured sphere $\mathbb S^2 \setminus \{ (0, 0, \pm 1)\}$. 

We now define our perturbing functions.
\begin{definition}
Let $\psi(s)$ be the cutoff function
$\psi(s) := \psi[1 , 2 ] (|s|)$ and set
\begin{align*}
u_x (s, \theta) = \frac{1}{4 \pi}\psi (s)\cos (\theta) \cosh(s), \quad u_y (s, \theta) = \frac{1}{4 \pi}\psi (s) \sin (\theta) \cosh (s), \quad u_z (s, \theta) =\frac 1{4\pi} \psi (s) \, |s|.
\end{align*}
The linear changes in the mean curvature due to adding the graphs of $u_x$, $u_y$ and $u_z$ are then:
\begin{align*}
w_x : = \cosh^2(s)\mathcal{L}_{F} u_x, \quad w_y : = \cosh^2(s)\mathcal{L}_{F} u_y, \quad w_z : = \cosh^2(s)\mathcal{L}_{F} u_z.
\end{align*}
\end{definition}

\begin{definition} \label{KappaDefs}
Let $\underline{\kappa}$ be the space of bounded functions $\kappa: \Omega \to \Real$ such that $\mathcal{L}_F \kappa =0$. Then standard theory implies $\underline{\kappa}$ is spanned by the functions
\begin{align} \notag
\kappa_x = \cos (\theta) \cosh^{-1} (s), \quad \kappa_y = \sin (\theta) \cosh^{-1} (s), \quad \kappa_z = \tanh(s).
\end{align}
\end{definition}

The functions $\kappa_x$, $\kappa_y$ and $\kappa_z$ are (up to sign) the $x$, $y$, and $z$ components of the unit normal for $F$. Let $\bar{\kappa}_x$, $\bar{\kappa}_y$ and $\bar{\kappa}_z$  be the lift to sphere of $\kappa_x$, $\kappa_y$ and $\kappa_z$, respectively, under the Gauss map of $F$. Then 
\begin{align} \notag
\bar{\kappa}_x = \bar{x}, \quad \bar{\kappa}_y = \bar{y}, \quad \bar{\kappa}_z = \bar{z}
\end{align} where $\bar{x}$, $\bar{y}$ and $\bar{z}$ denote the restriction to $\mathbb S^2$ of the ambient coordinate functions $x$, $y$ and $z$.

\begin{proposition} \label{KernelStuff}
For $\kappa w \in \{ \kappa_x w_x, \kappa_y w_y, \kappa_z w_z\}$, 
\begin{align*}
\int_\Omega \kappa \,  w \, d \mu_\Omega  = 1.
\end{align*}
\end{proposition}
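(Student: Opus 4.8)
The plan is to separate variables and reduce each of the three identities to a one–dimensional integration by parts, exploiting that $\kappa_x,\kappa_y,\kappa_z$ lie in the kernel $\underline\kappa$ of $\mathcal L_F$. Here $d\mu_\Omega$ is the flat measure $ds\,d\theta$ on $\Omega$, and by \eqref{FGeomData} we have $w_\bullet=\cosh^2(s)\mathcal L_F u_\bullet=(\Delta_\Omega+2\cosh^{-2}(s))u_\bullet$ with $\Delta_\Omega=\partial_s^2+\partial_\theta^2$. Since $u_x,u_y$ carry a single Fourier mode in $\theta$ and $u_z$ is $\theta$–independent, $\Delta_\Omega+2\cosh^{-2}(s)$ acts on the radial factor as a Schr\"odinger operator $\partial_s^2+q(s)$, which is formally self–adjoint; the key is that the radial factor of $\kappa_\bullet$ is a bounded solution of the corresponding homogeneous equation, so Lagrange's identity turns $\int_\Omega\kappa_\bullet w_\bullet\,d\mu_\Omega$ into a pure Wronskian boundary term.

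Concretely, for $\kappa_x w_x$ write $u_x=\tfrac1{4\pi}H(s)\cos\theta$ with $H=\psi\cosh$; using $\partial_\theta^2\cos\theta=-\cos\theta$ gives $w_x=\tfrac1{4\pi}\cos\theta\,(\mathcal LH)(s)$ with $\mathcal L:=\partial_s^2-1+2\cosh^{-2}(s)$. Since $\kappa_x=\cosh^{-1}(s)\cos\theta$ and $\int_0^{2\pi}\cos^2\theta\,d\theta=\pi$, we obtain $\int_\Omega\kappa_x w_x\,d\mu_\Omega=\tfrac14\int_{\Real}\cosh^{-1}(s)\,(\mathcal LH)(s)\,ds$. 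Now $\mathcal L_F\kappa_x=0$ says precisely $\mathcal L(\cosh^{-1})=0$, so Lagrange's identity on $[-R,R]$ annihilates the bulk term and leaves $\big[\cosh^{-1}(s)H'(s)-(\cosh^{-1})'(s)H(s)\big]_{-R}^{R}$. Because $1-\psi$ has compact support, $H=\cosh$ for $|s|$ large, and there the bracket equals $\tanh(s)+\tanh(s)=2\tanh(s)\to\pm2$ as $s\to\pm\infty$; letting $R\to\infty$ gives $\int_{\Real}\cosh^{-1}\,\mathcal LH\,ds=4$, hence $\int_\Omega\kappa_x w_x\,d\mu_\Omega=1$. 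The identity for $\kappa_y w_y$ is the same computation with $\cos\theta\mapsto\sin\theta$ (and $\int_0^{2\pi}\sin^2\theta\,d\theta=\pi$). For $\kappa_z w_z$ the functions are $\theta$–independent, so the angular integral contributes $\int_0^{2\pi}d\theta=2\pi$; the radial operator is $\partial_s^2+2\cosh^{-2}(s)$, which annihilates $\kappa_z=\tanh(s)$, and the same argument reduces matters to the Wronskian of $\tanh(s)$ against the radial factor of $u_z$ (which is $\tfrac{s}{4\pi}$ for $|s|$ large). That boundary term is $[\tanh(s)\,p'(s)-\cosh^{-2}(s)\,p(s)]_{-R}^{R}\to 1-(-1)=2$, so $\tfrac1{4\pi}\cdot2\pi\cdot2=1$.

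Two points are needed to make this rigorous, and they are the only real work. First, $u_x,u_y,u_z$ are not compactly supported in $s$, so the integration by parts must be done on $[-R,R]$ and the limit taken; this is harmless because every integrand decays exponentially — outside the cutoff region one has $\kappa_x w_x=\tfrac1{2\pi}\cos^2\theta\,\cosh^{-2}(s)$, $\kappa_y w_y=\tfrac1{2\pi}\sin^2\theta\,\cosh^{-2}(s)$, and $\kappa_z w_z$ decays like $s\,\cosh^{-2}(s)$ — so each $\int_\Omega\kappa_\bullet w_\bullet\,d\mu_\Omega$ is absolutely convergent and equals the $R\to\infty$ limit. Second, one must track parities: $\mathcal L$ and $\partial_s^2+2\cosh^{-2}(s)$ preserve evenness/oddness in $s$, $\kappa_x,\kappa_y$ have even radial factor while $\kappa_z=\tanh(s)$ is odd, so $u_z$ must be odd in $s$ for its pairing with $\kappa_z$ to be nonzero; this is the one place where a sign slip is easy. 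Everything else (split off the $\theta$–integral, invoke $\mathcal L_F\kappa_\bullet=0$, read off the Wronskian) is mechanical.
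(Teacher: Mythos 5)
Your argument is essentially the paper's own proof: the paper applies Green's second identity to $\int_{D_N}\kappa\,\mathcal L_F u\,d\mu_F$ on $D_N=\Omega\cap\{|s|\le N\}$, uses $\mathcal L_F\kappa=0$ to kill the bulk term, and evaluates the resulting boundary (Wronskian) terms as $N\to\infty$; your separation of variables followed by the one--dimensional Lagrange identity is the same computation organized mode by mode, and your $\kappa_x,\kappa_y$ calculations (including the factor $\tfrac14\cdot4=1$) match the paper's. One caveat is worth recording: for the $z$--component you take the radial factor of $u_z$ to be $\tfrac{s}{4\pi}$ for $|s|$ large, i.e.\ you treat $u_z$ as odd, whereas the paper's definition literally reads $u_z=\tfrac1{4\pi}\psi(s)\,|s|$, which is even in $s$; with that even function your own parity observation shows $\int_\Omega\kappa_z w_z\,d\mu_\Omega=0$, since $\kappa_z=\tanh(s)$ is odd and the radial operator preserves parity. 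So the stated identity for $\kappa_zw_z$ forces the odd choice $u_z=\tfrac1{4\pi}\psi(s)\,s$, and the paper's boundary computation at $s=-N$ (where it writes the contribution $-\int\tanh(-N)\,d\theta$) implicitly assumes exactly this; your parity warning thus flags a genuine typo/inconsistency in the paper rather than a gap in your argument. (This is harmless downstream, since only $u_x,u_y$ are used in the substitute--kernel construction, the $\kappa_z$--projection being handled by the zero meridian average.)
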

\begin{proof}
Let $N$ be a large constant and set $D : = D_N = \Omega \cap \{ |s| \leq N\}$ and $\partial = \partial_D : = F(|s| = N)$.  Then
\begin{align} \label{KS1}
\int_D \kappa w \,  d\mu_\Omega = \int_D \kappa  \mathcal{L}_F u d\mu_F = \int_{\partial D} \kappa \nabla^F_\eta u \, d \mu_\partial  - u \nabla^F_\eta \kappa d \mu_{\partial }
\end{align}
where $\nabla^F$ denotes the surface gradient on $F$ and where $\eta$ is the outward pointing conormal at $\partial D$. We do not write in the $\cosh^2(s)$ term in the second integral since $\cosh^2(s)d\mu_\Omega = d\mu_F$. Notice that $\partial D$ consists of four components, though only two will be needed in the calculation. 

Observe that for $\theta = \pm \pi$, $\partial_\theta \kappa_i=\partial_\theta u_i=0$ for $i \in \{x, y, z\}$. Thus, we are only concerned with the boundary components $s = \pm N$. From \eqref{FGeomData}, $\nabla^F_\eta : = \pm \cosh^{-1} (N) \partial_s$ for $\pm s > 0$ and $d \mu_{\partial} = \cosh(N) d\theta$. Thus
\begin{align*}
4\pi \int_D \kappa_x w_x \,  d\mu_\Omega =& \int_{-\pi}^{\pi}\cos^2(\theta) \tanh(N) d\theta - \int_{-\pi}^{\pi} - \cos^2(\theta)\tanh(N) d\theta\\
&+ \int_{-\pi}^{\pi} \cos^2(\theta)(-\tanh(-N)) d\theta-\int_{-\pi}^{\pi}\cos^2(\theta) \tanh(-N) d\theta\\
=& 4\tanh(N) \int_{-\pi}^{\pi} \cos^2(\theta) d\theta \rightarrow 4\pi.
\end{align*}
The same estimate follows for $\int_{\Omega} \kappa_y w_y d \mu_F$. For $w = w_z$ and $\kappa = \kappa_z$, 
\begin{align*}
\nabla_\eta^F \kappa_z = \cosh^{-3} (N).
\end{align*}
In this case, the second term on the right hand side of  (\ref{KS1}) converges to zero as $N$ goes to infinity, and we note that 
\begin{align}
4\pi\int_D \kappa_z w_z d\mu_\Omega = \int_{-\pi}^{\pi} \tanh(N) d\theta - \int_{-\pi}^{\pi} \tanh(-N) d\theta \rightarrow 4 \pi.
\end{align}
\end{proof}

\subsection{Inverting the stability operator in the spaces $\mathcal{X}^k$ modulo $\underline{\kappa}$}
With our modifying functions in hand, we first demonstrate we can invert the operator $\cosh^2(s) \mathcal L_F$ over the space of functions orthogonal to $\underline \kappa$. 
\begin{definition} \label{XPerpSpaces}
We let $\mathcal{X}^{k \, \perp}  \subset \mathcal{X}^k$ be the space of functions orthogonal to $\underline{\kappa}$ on $\Omega$. That is,  a function $f$ is in $\mathcal{X}^{k \, \perp }$ if and only if $f$ is in $\mathcal{X}^{k}$ and for all $\kappa \in \underline \kappa$,
\begin{align*}
\int_\Omega f \,  \kappa \, d \mu_{\Omega} = 0.
\end{align*} 
\end{definition}

We find it convenient to solve the linear problem for inhomogeneous $E$ with $E|_{\partial \Lambda} =0$ as a few technical arguments are made easier. As a trade-off, we have to be a bit more careful in applying the linear theory to the fixed point argument in Section \ref{FindingExactSolutions}.

\begin{proposition} \label{flat_inverse}
 Let $\mathcal{X}^{0 \, \perp}_0  \subset \mathcal{X}^{0 \, \perp}$ be the subspace of functions that vanish on $\partial \Lambda$. Then there is a bounded  linear map 
\begin{align*}
\mathcal{R}_F^\perp : \mathcal{X}^{0 \, \perp}_0 \rightarrow \mathcal{X}^{2} 
\end{align*}
such that for $E \in \mathcal{X}^{0\, \perp}_0$,
\begin{align*}
\cosh^{2}(s) \mathcal{L}_F \mathcal R_F^\perp [E] = E.
\end{align*}
\end{proposition}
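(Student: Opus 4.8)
The plan is to reduce the problem on the helicoid $F$ to the spectral analysis of $\Delta_{\mathbb S^2} + 2$ on the sphere via the Gauss map, and then solve by separation of variables in $\theta$. Recall from \eqref{FGeomData} that $\cosh^2(s)\mathcal L_F = \Delta_\Omega + 2\cosh^{-2}(s)$, so the equation $\cosh^2(s)\mathcal L_F v = E$ becomes the flat-cylinder PDE $v_{ss} + v_{\theta\theta} + 2\cosh^{-2}(s)\,v = E$ on $\Lambda = \{|s| \le \arccosh(\ell)\}$ with Dirichlet data $v = 0$ on $\partial\Lambda$. First I would expand both $E$ and the unknown $v$ in Fourier series in $\theta$, writing $E = \sum_n E_n(s)e^{in\theta}$ and $v = \sum_n v_n(s)e^{in\theta}$, so that each mode satisfies the ODE $v_n'' - n^2 v_n + 2\cosh^{-2}(s)\,v_n = E_n$ on $[-\arccosh(\ell),\arccosh(\ell)]$ with $v_n(\pm\arccosh(\ell)) = 0$. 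The operator $-\partial_s^2 + n^2 - 2\cosh^{-2}(s)$ is a Pöschl–Teller–type Schrödinger operator whose homogeneous solutions are explicit: for the relevant modes one checks that $\tanh(s)$, $\cosh^{-1}(s)\cos\theta$, $\cosh^{-1}(s)\sin\theta$ (the elements of $\underline\kappa$) span the $\mathcal L_F$-kernel, so the troublesome modes are exactly $n = 0, \pm 1$, where the homogeneous ODE has a bounded solution.

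The key steps, in order: (i) For $|n| \ge 2$, the ODE operator is invertible with no kernel even on all of $\mathbb R$ — one constructs the Green's function from the two homogeneous solutions (one decaying at $+\infty$, one at $-\infty$, built from $\tanh s$ and its companion via reduction of order) and uses the Wronskian; the Dirichlet problem on the finite interval is then solvable, and because $|n|\ge 2$ the solutions decay, giving uniform control with the weight $\cosh^{3/4}(s)$ to spare. (ii) For $n = 0, \pm 1$ the homogeneous equation has the bounded solution $\kappa_0 := \tanh s$ (resp. $\kappa_{\pm1} := \cosh^{-1}(s)e^{\pm i\theta}$) and a second, linearly growing solution; here the orthogonality hypothesis $\int_\Omega E\,\kappa\,d\mu_\Omega = 0$ for all $\kappa \in \underline\kappa$ is precisely the solvability (Fredholm) condition that lets us solve the inhomogeneous ODE — integrating $E_n$ against the bounded homogeneous solution and using the vanishing of that integral, the particular solution obtained by variation of parameters does not pick up the growing mode and stays bounded (in fact decays like $\cosh^{-1}(s)$ for $n=\pm1$, and one uses oddness/the Dirichlet condition to kill the residual $\tanh s$ component for $n = 0$). (iii) Assemble $v = \sum_n v_n e^{in\theta}$, checking that the series converges in $C^{2,3/4}(\Lambda,\cosh^{3/4}(s))$ — this is where one needs Schauder estimates mode-by-mode with constants summable in $n$ (the high modes contribute geometrically small terms), promoting the pointwise/$L^2$ bounds to the weighted Hölder norm $\|\cdot:\mathcal X^2\|$. (iv) Linearity and boundedness of $E \mapsto v =: \mathcal R_F^\perp[E]$ are then immediate from the construction.

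The main obstacle I anticipate is step (iii): converting the separated-variable solution into a genuinely bounded map $\mathcal X^{0\,\perp}_0 \to \mathcal X^2$ in the exponentially weighted Hölder norm, uniformly in $\ell$ (which is large but not uniformly bounded). One must be careful that the Green's functions for the $|n|\ge 2$ modes have operator norms bounded independently of $n$ and $\ell$ in the weighted spaces — the weight $\cosh^{3/4}(s)$ is sub-exponential, so it is compatible with the $\cosh^{-1}$-type decay of the homogeneous solutions, but the estimate must be genuinely uniform. A secondary subtlety is handling the $n = 0$ mode: the bounded homogeneous solution $\tanh s$ does not vanish at $\partial\Lambda$, so after removing the growing mode via the orthogonality condition one still needs to add a correcting multiple of $\tanh s$ to meet the Dirichlet condition, and one must verify this correction is controlled — here the fact that we only need the output in $\mathcal X^2$ (not orthogonal to $\underline\kappa$) gives room, but the bound on the correction's size as $\ell\to\infty$ needs the sub-exponential weight to absorb it. I would also double-check that $E|_{\partial\Lambda} = 0$ is genuinely used to simplify the boundary analysis of the low modes, as the remark preceding the proposition suggests.
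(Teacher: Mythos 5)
Your reduction to Pöschl--Teller ODEs per Fourier mode is a reasonable alternative strategy, but as written it has a genuine gap: the decision to impose Dirichlet data $v=0$ on $\partial\Lambda$ is incompatible with a bound that is uniform in $\ell$, and uniformity is essential here (in Propositions \ref{FlatTotalInverse} and \ref{SchauderMapProperties} the constant coming from the linear theory is universal, while all $\ell$-dependence is tracked explicitly). The point is that on the finite cylinder with Dirichlet conditions the operator $-\partial_s^2+n^2-2\cosh^{-2}(s)$ has, for $n=\pm1$ (and an analogous problem for $n=0$), an eigenvalue of size $\sim \ell^{-2}$ whose eigenfunction is close to $\cosh^{-1}(s)$ but is \emph{not} an element of $\underline\kappa$; the hypothesis only gives orthogonality to the exact kernel of the infinite-cylinder problem. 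Concretely: variation of parameters with $\phi_1=\cosh^{-1}(s)$, $\phi_2\sim\sinh(s)$ and the orthogonality $\int_{-L}^{L}\phi_1E_{\pm1}\,ds=0$ does produce a particular solution bounded by $C\beta\cosh^{3/4}(s)$ (not by $\cosh^{-1}(s)$ as you claim --- the data itself grows like $\cosh^{3/4}$), but its boundary values are of size $\beta\ell^{3/4}$, and matching your Dirichlet condition forces you to add $c_1\phi_1+c_2\phi_2$ with $c_1\sim\beta\ell^{7/4}$. Since $\cosh^{-1}(s)$ is largest exactly where the weight $\cosh^{3/4}(s)$ is smallest, this correction has $\mathcal X^2$-norm $\sim\beta\ell^{7/4}$, destroying uniformity; the ``room'' you hope the sub-exponential weight provides is not there. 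The repair is simply to drop the boundary condition on $v$ --- the proposition does not ask for it; the hypothesis $E|_{\partial\Lambda}=0$ is there so that $E$ extends by zero to all of $\Omega$ --- and to solve on the infinite cylinder with decay/controlled growth replacing boundary conditions, which is exactly how the orthogonality to $\underline\kappa$ becomes the correct solvability condition.

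This is also where your approach diverges from the paper's. The paper never separates variables: it splits $E$ into its meridian average $\bar E(s)$ (inverted by the explicit double integral \eqref{MIT1}) and a zero-meridian-average part $\mathring E$, inverts the cut-off operator $\hat{\mathcal L}_{F\,a}$ on zero-average functions as a perturbation of $\Delta_\Omega$ (Lemma \ref{LaplaceCPTSPTInverse}, Proposition \ref{LaplaceInverse}, Lemma \ref{CompactifySupport}, all on $\Omega$ with $\ell$-independent constants), reducing to a compactly supported remainder $\mathfrak T[\mathring E]$, and then lifts that remainder to the sphere via the Gauss map, where orthogonality to $\bar x,\bar y,\bar z$ makes $\Delta_{\mathbb S^2}+2$ solvable. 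Your step (iii) is also left genuinely open: summing Fourier modes in a H\"older norm with ``mode-by-mode Schauder constants summable in $n$'' is not how one would close it, since H\"older regularity of $E$ only gives $|E_n|\lesssim n^{-3/4}$; the workable route is mode-wise weighted sup bounds (using the $n^{-2}$ gain for $|n|\ge2$ and the variation-of-parameters bounds for $n=0,\pm1$), summation in the weighted sup norm, and then a single application of interior Schauder estimates to the assembled two-dimensional solution. With those two repairs --- no Dirichlet condition, and sup-then-Schauder assembly --- your ODE route could be made to work and would be a legitimately different, more explicit proof than the paper's; as written, it does not yet establish the proposition.
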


The proof of Proposition \ref{flat_inverse} follows from three main steps. We first show $\Delta_\Omega$ can be inverted over the space of H\"older functions with a convenient averaging property. 
In the second step, we show there is a sufficiently large constant $s_0$  so that on $\Lambda \cap \{ s > s_0\}$  the operator $\cosh^2(s)\mathcal{L}_F $ can be solved as a perturbation of the Laplace operator in the spaces $\mathcal{X}^{k}$. In this way we reduce to the case that the error term has large but fixed compact support. In the third step, we solve for the error term by lifting the problem to the sphere using the Gauss map of $\nu_F$, where the problem reduces to an eigenvalue problem for the stability operator on the sphere.

\begin{definition} \label{ZeroMeridianAverageSpaces}
Let $\mathring{C}^{k, \alpha}_{loc}(\Omega)$ denote the space of functions $E \in C^{k,\alpha}(\Omega)$ satisfying the condition
\begin{align} \label{MeridianCondition}
\int_{- \pi}^\pi E(s, \theta) d \theta = 0
\end{align}
for all $s$. A function satisfying (\ref{MeridianCondition}) is said to have {\bf zero average along meridians}. Given a positive weight function $f$, we then denote
\begin{align*}
\mathring{C}^{k, \alpha} (\Omega, f) : = C^{k, \alpha} (\Omega, f)  \cap  \mathring{C}^{k, \alpha}_{loc}.
\end{align*}
\end{definition}

We will prove the invertibility of $\Delta_\Omega$ over $\mathring C^{0, \alpha}(\Lambda)$ by first considering the invertibility of the local problem.
\begin{lemma} \label{LaplaceCPTSPTInverse}
Let $A_0 \subset \Lambda$ be the annulus
\begin{align}\notag
A_0 : = \Omega \cap \{ |s| \leq 5/8 \}
\end{align}
and let $\mathring{C}_0^{k, \alpha} (A_0)$ be the space of $\mathring C^{k, \alpha}$ functions on $\Omega$ with support on $A_0$.

Given  a compact set $K$ containing $A_0$,  there is a bounded linear map
\begin{align}\notag
\mathring{\mathcal{R}}_0[- ]: \mathring{C}^{0, \alpha}_0 (A_0) \rightarrow C^{2, \alpha}(\Omega) \cap \mathring{C}^{k} (\Omega \setminus K, \cosh^{-1} (s)) 
\end{align}
such that
\begin{align}\notag
\Delta_\Omega \mathring{\mathcal{R}}_0[E] = E.
\end{align}
\end{lemma}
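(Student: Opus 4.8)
The plan is to invert $\Delta_\Omega$ on the annulus $A_0$ by an explicit Fourier decomposition in $\theta$, using the zero-meridian-average condition to kill the obstruction. Since every $E \in \mathring C^{0,\alpha}_0(A_0)$ satisfies $\int_{-\pi}^\pi E(s,\theta)\,d\theta = 0$ for all $s$, I would write $E(s,\theta) = \sum_{n \neq 0} E_n(s) e^{in\theta}$ with no $n=0$ mode. On $\Omega$, $\Delta_\Omega = \partial_s^2 + \partial_\theta^2$, so seeking $u = \sum_{n\neq 0} u_n(s) e^{in\theta}$ reduces the problem to the family of one-dimensional ODEs $u_n'' - n^2 u_n = E_n$ on $\Real$, where each $E_n$ is supported in $|s| \le 5/8$. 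Because $n \neq 0$, the homogeneous equation has the two exponential solutions $e^{\pm |n| s}$, neither of which is bounded, so the bounded particular solution is unique and given by the Green's function $u_n(s) = -\frac{1}{2|n|}\int_\Real e^{-|n||s-t|} E_n(t)\,dt$. This produces a solution $u$ that decays exponentially as $|s| \to \infty$ (in fact like $e^{-|s|}$ since the lowest surviving mode is $|n|=1$), which comfortably sits inside $\mathring C^k(\Omega\setminus K, \cosh^{-1}(s))$, and which automatically has zero meridian average since no $n=0$ mode is introduced.

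The key steps, in order: (1) set up the Fourier-in-$\theta$ decomposition and observe the $n=0$ mode is absent by hypothesis; (2) solve each reduced ODE by the bounded Green's function, noting uniqueness of the bounded solution for $n\neq 0$; (3) derive the pointwise and Schauder estimates — for the $C^{2,\alpha}(\Omega)$ bound near $A_0$ one appeals to interior elliptic (Schauder) estimates for $\Delta_\Omega$ applied to $u$, together with the $L^\infty$ control of $u$ coming from the Green's function representation and $\|E\|_{0,\alpha}$; for the weighted decay estimate on $\Omega\setminus K$ one uses that each mode decays at least like $e^{-|s|}$ and sums the geometric series in $n$, controlling the $n$-dependence of the Green's function constants $\frac{1}{2|n|}$ against the Fourier coefficients of the $C^{0,\alpha}$ function $E$ (a crude bound: $|E_n(s)| \le C\|E\|_{0,\alpha}$ uniformly, and one only needs finitely-derivative-many factors of $n$ for the $C^k$ norm, absorbed by the exponential decay $e^{-|n||s-t|}$ for $|s|$ large); (4) check linearity and boundedness of the resulting map $\mathring{\mathcal R}_0$, which are immediate from the explicit integral formula. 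Choosing $K$ to contain $A_0$ and noting $\mathrm{supp}\,E \subset A_0$ makes the region $\Omega \setminus K$ one on which $E \equiv 0$, so $u$ there is genuinely harmonic and the decay estimates are clean.

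I expect the main obstacle to be the bookkeeping in step (3): one must get a clean weighted $C^k$ estimate on $\Omega\setminus K$ with weight $\cosh^{-1}(s)$ uniformly in the (non-compact) $s$-direction while also getting a $C^{2,\alpha}$ estimate near $A_0$, and the two regimes are handled by genuinely different tools (explicit Green's function decay versus interior Schauder). Summing over Fourier modes requires a modest amount of care: one needs that differentiating in $s$ or $\theta$ only costs polynomial factors of $n$, which are dominated by the exponential factor $e^{-|n|\,\mathrm{dist}(s,\,\mathrm{supp}\,E_n)}$ once $s$ is bounded away from $A_0$, so the series converges and the weighted norm is controlled by $\|E\|_{0,\alpha}$. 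None of this is conceptually deep, but it is the part where the estimates must be assembled carefully rather than quoted. An alternative to the explicit Green's function — invoking the general Fredholm theory for $\Delta_\Omega$ on exponentially weighted spaces and identifying the cokernel with the $n=0$ mode — would also work, but the explicit one-dimensional computation seems cleanest here and makes the decay and the zero-average property transparent.
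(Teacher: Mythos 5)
Your proposal is correct, but it takes a genuinely different route from the paper. The paper never decomposes into Fourier modes: it solves the Dirichlet problems $\Delta u_L = E$, $u_L(\pm L,\theta)=0$ on the finite cylinders $\Omega_L = \Omega\cap\{|s|\le L\}$, integrates the equation in $\theta$ to see that each $u_L$ inherits the zero meridian average, combines this with a Poincar\'e inequality on a compact set to get the uniform bound $\sup_K|u_L|\le C(K)\|E:C^{0,\alpha}(A_0)\|$, and passes to the limit $L\to\infty$; the exponential decay of the limit is then asserted to follow directly (in effect from the ends being half-cylinders on which the solution is harmonic with zero meridian average, i.e.\ exactly the mode analysis you make explicit). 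Your separation of variables replaces the exhaustion-and-limit argument and makes both the decay rate $e^{-|s|}$ (coming from the lowest surviving mode $|n|=1$, which matches the weight $\cosh^{-1}(s)$) and the role of the hypothesis $\int_{-\pi}^{\pi}E\,d\theta=0$ completely transparent, at the cost of the bookkeeping you identify in step (3). Two points you handle or should handle carefully: the global $C^{2,\alpha}(\Omega)$ bound cannot be obtained by summing mode-wise estimates (the coefficients $E_n$ decay only like $|n|^{-\alpha}$), so it must come, as you say, from interior Schauder applied to $\Delta u = E$ on unit balls together with the $L^{\infty}$ bound $\sum_{n\neq 0}|u_n|\le C\|E\|_{\infty}\sum n^{-2}$ (and one should note that the uniformly convergent mode sum solves the equation distributionally before invoking elliptic regularity); and for the weighted $C^k$ estimate on $\Omega\setminus K$ with $k\ge 3$ the constant necessarily depends on $K$ through the distance from $\Omega\setminus K$ to $A_0$, since higher derivatives of $u$ are only controlled at scale $\mathrm{dist}(\cdot,A_0)$ --- this is exactly where your competition between polynomial factors of $n$ and the factor $e^{-|n|\,\mathrm{dist}(s,\,\mathrm{supp}\,E_n)}$ enters, and the same implicit dependence on $K$ is present in the paper's statement.
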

\begin{proof}
Lemma \ref{LaplaceCPTSPTInverse} can be established several ways. We choose the following approach. Let $\Omega_L$ be the domain 
\begin{align} \label{OmegaLDef}
\Omega_L : = \Omega\cap \{ |s| \leq L\}.
\end{align}
In other words $\Omega_L$ is the just the a flat cylinder of length $2 L$ centered at the meridian $\{s = 0\}$. Standard elliptic theory gives the existence of  functions $u_L \in C^{2, \alpha}_{loc} (\Omega_L)$ satisfying:
\begin{align}\label {LCPTSPTI1}
\Delta u_L = E, \quad u_L (\theta, \pm L) = 0.
\end{align}
It is then direct to verify that the functions $u_L$ satisfy:
\begin{align} \label{LCPTSPTI2}
\int_{- \pi}^\pi u (s, \theta) d \theta = 0.
\end{align}
To see this, we integrate both sides of the first equality in  (\ref{LCPTSPTI1}) in $\theta$ to obtain
\begin{align} \notag
\left(\int_{- \pi}^\pi u (s, \theta) d \theta\right)_{s  s} = 0.
\end{align}
The boundary conditions  in (\ref{LCPTSPTI1}) then imply (\ref{LCPTSPTI2}). From this and the  Poincare inequality for $K$ \begin{align} \label{LCPTSPT4}
\sup_{K} |u_L| \leq C(K) \| E : C^{0, \alpha} (K) \| \leq C(K) \| E : C^{0, \alpha} (A_0)\| .
\end{align}
We then set
\begin{align} \label{LCPTSPT5} 
\mathring{\mathcal{R}}_0  [E]=u_\infty (x, s) : = \lim_{L \rightarrow \infty} u_L(x, s).
\end{align}
From (\ref{LCPTSPT4}), the limit in \eqref{LCPTSPT5} exists in $C^{2, \alpha}$ and by continuity $u_\infty$ solves
\begin{align*}
\Delta_{\Omega} u_\infty = E, \quad \int_{- \pi}^\pi u_\infty(s, \theta) d\theta  = 0.
\end{align*}
The exponential decay of $u_\infty$ in both the positive and negative $s$ directions then follows directly.  
\end{proof}

We are now ready to prove the invertibility of $\Delta_\Omega$ over $\mathring{C}^{0, \alpha}_0 (\Lambda)$. Following standard arguments, we will sum up the solutions to the local problem found by Lemma \ref{LaplaceCPTSPTInverse} and show that the estimates are sufficient to establish convergence.
\begin{proposition} \label{LaplaceInverse}
Given $\rho \in (-1 , 1) \setminus \{ 0 \}$, $k \geq 0$, $\alpha \in (0, 1)$ and a compact set $K$ containing $\Lambda$, there is a bounded linear map 
\begin{align}\notag
\mathcal{R}_{\rho \,\alpha} [-]: \mathring{C}^{0, \alpha}_0 (\Lambda, \cosh^\rho (s)) \rightarrow  \mathring{C}^{2, \alpha} (\Omega ,  \cosh^\rho (s)) \cap \mathring{C}^k (\Omega \setminus K ,  \ell^{\rho}\cosh^{-1} (s))
\end{align}
such that 
\begin{align} \notag
\Delta_{\Omega} \mathcal R_{\rho \, \alpha}[E] = E.
\end{align}
\end{proposition}

\begin{proof}
Fix $\mathring{E} \in \mathring{C}_0^{0, \alpha} (\Lambda, \cosh^\rho (s))$ and set $\beta : = \|\mathring{E}:  \mathring{C}_0^{0, \alpha} (\Lambda, \cosh^\rho (s)) \| $.  For each integer  $i$, let $A_i $ be the annulus $A_i : = A_0 + i $.  Note that the set $\{ A_i\}$ is a locally finite covering of $\Omega$  such that $A_i \cap A_j = \emptyset$ if $|i - j| >1$. Let $\{ \psi_i\}$ be  a partition of unity subordinate to $\{ A_i \}$ such that $\psi_i (s + 1) = \psi_{i + 1} (s)$. Recall  that $\mathring{E}$ integrates to zero along  meridian circles :
\begin{align*}
\int_{- \pi}^\pi \mathring{E} (s, \theta) d\theta = 0.
\end{align*}
 With  $\mathring{E}_i (s, \theta): = \psi_i\mathring{ E}(s- i, \theta)$, it is straightforward to check that  $\mathring{E}_i \in \mathring{C}_0^{0, \alpha}(A_0)$ with the estimate
\begin{align*}
\| \mathring{E}_i : \mathring{C}_0^{0, \alpha} (A_0)\| \leq  C \beta \cosh^{\rho} (i)
\end{align*}
We then set
\begin{align*}
\mathring{u}_i (s, \theta) : = \mathring{\mathcal{R}}_0 (E_i) (s + i, \theta).
\end{align*}
 From Lemma \ref{LaplaceCPTSPTInverse},
 \begin{align*} 
 \| \mathring{u}_i : C^{2, \alpha} (A_j)\|&  \leq C \beta \cosh^\rho( i)  \cosh^{-1} (j-i) \\ \notag
 &  \leq C \beta
  \left\{\begin{array}{c}
 e^{j} e^{(\rho  - 1)i}, \quad i > j \\ \notag
 \\
 e^{-j} e^{(1 + \rho)i }, \quad i \leq j 
  \end{array} \right.
 \end{align*}
 Finite summation yields 
 \begin{align*}
  \left\| \sum_{i = 0}^n \mathring{u}_i : C^{2, \alpha} (A_j)\right\| \leq \frac{C \beta}{1 - |\rho|} \cosh^\rho (j). 
 \end{align*}
Thus, the partial sums converge to a limiting function $\mathring{u}$ with zero average along meridians satisfying
\begin{align*}
\Delta_{\Omega} \mathring{u} = \mathring{E}, \quad \| \mathring{u}: C^{2, \alpha} (A_j)\| \leq \frac{C \beta}{1 - |\rho|} \cosh^\rho (j). 
\end{align*}
In other words $\mathring{u}$ satisfies the estimate
\begin{align*}
\| \mathring{u}: C^{2, \alpha} (\Omega, \cosh^{\rho} (s))\| \leq \frac{C}{1 - |\rho|} \| \mathring{E}: C^{0, \alpha} (\Omega, \cosh^{\rho} (s))\|
\end{align*}
Setting $\mathcal{R}_{\rho \alpha}[\mathring{E}] : = \mathring{u}$ provides the result.
\end{proof}

We now proceed with the second step in the proof of Proposition \ref{flat_inverse}. Notice the following technical lemma is stated much more generally than we will need. The reader may find it helpful to reduce the statement to when $\rho = 3/4$ and $k =3$.

\begin{lemma} \label{CompactifySupport}
Let $\mathcal{L} : = \Delta_{\Omega} + P (s)$ be a second order linear operator defined on  $\Omega$, and assume that 
\begin{equation} \label{CS0}
\| P (s): C^{0, \alpha} (\Omega, 1 )\| \leq \epsilon. 
\end{equation}
Given $\rho \in (-1 , 1) \setminus \{ 0 \}$, $k \geq 0$, $\alpha \in (0, 1)$ and a compact set $K$ containing $\Lambda$, there exists $\epsilon_*>0$ such that for all $0<\epsilon<\epsilon_*$, there exists a bounded linear map
\begin{align*}
\mathcal{R}_{\rho \, \alpha}[ {\mathcal{L}}, -]: \mathring{C}^{0, \alpha}_0(\Lambda,  \cosh^{\rho} (s)) \rightarrow \mathring{C}^{2, \alpha} (\Omega,  \cosh^\rho (s)) \cap \mathring{C}^k (\Omega \setminus K ,  \ell^{\rho}\cosh^{-1} (s))
\end{align*}
such that
\begin{align} \notag
 \mathcal{L} \mathcal{R}_{\rho\, \alpha}[\mathcal{L}, E]  = E.
 \end{align}
\end{lemma}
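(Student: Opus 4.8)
The plan is to solve the equation $\mathcal L u = E$ on $\Omega$ by a Neumann-series argument built on the inverse $\mathcal R_{\rho\,\alpha}$ of $\Delta_\Omega$ supplied by Proposition \ref{LaplaceInverse}. Writing $\mathcal L = \Delta_\Omega + P(s)$, one seeks $u$ with $\Delta_\Omega u = E - P(s) u$, which formally gives the fixed-point equation $u = \mathcal R_{\rho\,\alpha}\bigl[E - P(s) u\bigr]$. The first issue to address is that $\mathcal R_{\rho\,\alpha}$ only accepts inhomogeneous terms that are supported on $\Lambda$ and have zero average along meridians. The zero-average condition is preserved because $P(s)$ depends only on $s$: if $u$ has zero average along meridians then so does $P(s)u$, since $\int_{-\pi}^\pi P(s)u(s,\theta)\,d\theta = P(s)\int_{-\pi}^\pi u(s,\theta)\,d\theta = 0$. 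The support condition is the real obstacle and is handled below by inserting a cutoff.

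First I would set up the iteration carefully. Fix the cutoff $\chi := \psi[\,\text{something}\,]$ — concretely, take $\chi(s) = \psi[\arccosh(\ell), \arccosh(\ell) - 1](|s|)$ so that $\chi \equiv 1$ on a slightly smaller cylinder and $\chi \equiv 0$ outside $\Lambda$ — and define the map $\mathcal T$ on $\mathring C^{2,\alpha}(\Omega, \cosh^\rho(s))$ by $\mathcal T[u] := \mathcal R_{\rho\,\alpha}\bigl[\chi\,(E - P(s)u)\bigr]$. Since $\chi(E - P u)$ is supported on $\Lambda$ and still has zero meridian average (multiplication by the $s$-dependent factor $\chi$ preserves the averaging condition just as $P(s)$ did), $\mathcal T$ is well-defined. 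By Proposition \ref{LaplaceInverse} and the hypothesis \eqref{CS0},
\begin{align*}
\|\mathcal T[u_1] - \mathcal T[u_2] : C^{2,\alpha}(\Omega, \cosh^\rho(s))\|
&\leq \frac{C}{1-|\rho|}\,\|\chi P(s)(u_1 - u_2) : C^{0,\alpha}(\Omega,\cosh^\rho(s))\|\\
&\leq \frac{C\epsilon}{1-|\rho|}\,\|u_1 - u_2 : C^{2,\alpha}(\Omega,\cosh^\rho(s))\|,
\end{align*}
so for $\epsilon_* := (1-|\rho|)/(2C)$ and any $0 < \epsilon < \epsilon_*$ the map $\mathcal T$ is a contraction on the Banach space $\mathring C^{2,\alpha}(\Omega,\cosh^\rho(s))$. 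The Banach fixed point theorem then produces a unique $u$ with $u = \mathcal R_{\rho\,\alpha}[\chi(E - P(s)u)]$, hence $\Delta_\Omega u = \chi(E - P(s)u)$, together with the bound $\|u : C^{2,\alpha}(\Omega,\cosh^\rho(s))\| \leq C\|E : C^{0,\alpha}(\Lambda,\cosh^\rho(s))\|$ and, from the second mapping property in Proposition \ref{LaplaceInverse}, the decay estimate $u \in \mathring C^k(\Omega\setminus K, \ell^\rho\cosh^{-1}(s))$.

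The remaining point — and the step I expect to require the most care — is to recover $\mathcal L u = E$ on all of $\Omega$ from the equation $\Delta_\Omega u = \chi(E - P(s)u)$. On the region where $\chi \equiv 1$, which contains $\Lambda$ (or at least the slightly shrunken cylinder; by reparametrizing the cutoff one can arrange it to be all of $\Lambda$, at the cost of enlarging the constant), we have $\Delta_\Omega u = E - P(s) u$, i.e. $\mathcal L u = E$; and since $E$ is supported on $\Lambda$ by hypothesis, on $\Omega\setminus\Lambda$ we need $\mathcal L u = 0$, which is not automatic from the construction above. The clean fix is to note that this is exactly the content one wants: one should not insert $\chi$ at all on $E$ (which already vanishes off $\Lambda$) but only needs $\chi$ to tame $P(s)u$, whose support is all of $\Omega$. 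So the better formulation is $\mathcal T[u] := \mathcal R_{\rho\,\alpha}[E - \chi_K P(s) u]$ where $\chi_K$ is a cutoff supported in the compact set $K$ and equal to $1$ on a neighborhood of $\Lambda$; outside $K$ one then genuinely has $\Delta_\Omega u = 0$ with $u$ decaying, forcing $u \equiv 0$ there by the maximum principle, so that in fact $P(s)u \equiv 0$ off $\Lambda$ and the cutoff $\chi_K$ was invisible. Tracking this tail argument — verifying that the solution produced by the contraction actually vanishes outside $K$, so that the equation $\mathcal L u = E$ holds globally and not just on the support of the cutoff — is the delicate part, and it leans essentially on the exponential-decay half of Proposition \ref{LaplaceInverse} together with uniqueness of decaying harmonic functions with zero meridian average. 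Setting $\mathcal R_{\rho\,\alpha}[\mathcal L, E] := u$ completes the proof.
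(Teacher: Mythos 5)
Your contraction scheme is, at heart, the same Neumann-series iteration the paper uses (the paper sets $u_0=\mathcal{R}_{\rho\,\alpha}[E]$, $E_{k+1}=-P(s)u_k$, $u_{k+1}=\mathcal{R}_{\rho\,\alpha}[E_{k+1}]$ and sums a geometric series), and your contraction estimate with $\epsilon_*\sim(1-|\rho|)/C$ is the same smallness mechanism. The genuine gap is exactly in the step you flagged as delicate: removing the cutoff. The claim that the fixed point $u$ of $\mathcal{T}[u]=\mathcal{R}_{\rho\,\alpha}[E-\chi_K P(s)u]$ vanishes identically on $\Omega\setminus K$ ``by the maximum principle'' is false. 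On the two ends of $\Omega\setminus K$ the function $u$ is harmonic and decaying, but it has generically nonzero boundary values on $\partial K$, and the maximum principle only gives $\sup_{\Omega\setminus K}|u|\leq\sup_{\partial K}|u|$; decaying harmonic functions with zero meridian average, such as $e^{-|s|}\cos\theta$, are exactly what the output of Proposition \ref{LaplaceInverse} looks like off $K$, and they are not zero. Consequently your $u$ solves $\Delta_\Omega u=E-\chi_K P(s)u$, i.e.\ $\mathcal{L}u=E+(1-\chi_K)P(s)u$, and the extra term does not vanish, so the identity $\mathcal{L}\,\mathcal{R}_{\rho\,\alpha}[\mathcal{L},E]=E$ fails off the set where the cutoff equals $1$. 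This identity is needed on all of $\Omega$ (it is what makes $\mathfrak{T}[E]$ compactly supported in Proposition \ref{TruncationMapProperties}), so the lemma is not proved; the same defect affects your first cutoff $\chi$, and in the $\chi_K$ version the datum $E-\chi_K P(s)u$ is moreover not supported in $\Lambda$, so it is not even in the stated domain of $\mathcal{R}_{\rho\,\alpha}$.

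The repair is to dispense with the cutoff altogether, because the obstruction you were trying to engineer around is not really there: the construction proving Proposition \ref{LaplaceInverse} (summing the translated local solutions of Lemma \ref{LaplaceCPTSPTInverse} over all annuli $A_i$) inverts $\Delta_\Omega$ on any zero-meridian-average datum with finite $\cosh^\rho(s)$-weighted $C^{0,\alpha}$ norm on all of $\Omega$; the support restriction to $\Lambda$ enters only in the improved decay $\ell^{\rho}\cosh^{-1}(s)$ off $K$. Since $\|P:C^{0,\alpha}(\Omega,1)\|\leq\epsilon$ and multiplication by the $s$-dependent function $P$ preserves zero meridian average, $P(s)u$ lies in this larger space with norm $\leq C\epsilon\|u:C^{2,\alpha}(\Omega,\cosh^{\rho}(s))\|$, so the uncut iteration $u\mapsto\mathcal{R}_{\rho\,\alpha}[E-P(s)u]$ (equivalently the paper's series, which applies $\mathcal{R}_{\rho\,\alpha}$ directly to $-P(s)u_k$, tacitly using this extension) converges by your same contraction estimate and its limit solves $\mathcal{L}u=E$ on all of $\Omega$; the decay estimate off $K$ is then recovered from the first term of the series, whose datum is supported in $\Lambda$, together with the smallness of the successive corrections.
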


\begin{proof}

We proceed by iteration.
Set $u_0 : = \mathcal{R}_{\rho \, \alpha} [E]$ and set
\begin{align*}
\mathcal{L} u_0 = E + P(s) u_0 : = E - E_1
\end{align*}
where $E_1$ is implicitly defined above. From Proposition \ref{LaplaceInverse} $E_1 \in  C^{0,\alpha}(\Lambda)$ and satisfies the estimate
\begin{align*}
\| E_1: \mathring{C}^{0, \alpha}_0 (\Lambda, \cosh^\rho (s)) \|  \leq C(\alpha, \rho) \epsilon\| E: \mathring{C}^{0, \alpha}_0 (\Lambda, \cosh^\rho (s)) \|.
\end{align*}
Taking $\epsilon_*$ sufficiently small we can achieve the estimate, 
\begin{align*}
\| E_1 : \mathring{C}^{0, \alpha}_0 (\Lambda, \cosh^\rho (s)) \| \leq \frac 12 \| E: \mathring{C}^{0, \alpha}_0 (\Lambda, \cosh^\rho (s)) \|.
\end{align*}
Inductively define the sequence $(E_k, u_k)$ by
\begin{align*}
u_k = \mathcal{R}_{\rho \, \alpha }(E_k), \quad E_{k + 1} = E_k - \mathcal{L} u_k .
\end{align*}
Then
\begin{align*}
\| u_i: \mathring{C}^{2, \alpha} (\Omega ,  \cosh^\rho (s)) \cap \mathring{C}^k (\Omega \setminus K ,  \ell^{\rho}\cosh^{-1} (s))\| \leq C \| E_{i}: \mathring{C}^{0, \alpha}_0 (\Lambda, \cosh^\rho (s)) \| \leq C2^{-i}.
\end{align*}
The partial sums $v_n:  = \sum_{i = 0}^n  u_i$ satisfy $\mathcal{L} v_n = E - E_{n + 1}$ and
\begin{align*}
 \| v_n: \mathring{C}^{2, \alpha} (\Omega ,  \cosh^\rho (s)) \cap \mathring{C}^k (\Omega \setminus K ,  \ell^{\rho}\cosh^{-1} (s))\| \leq  C_0  \| E: \mathring{C}^{0, \alpha}_0 (\Lambda, \cosh^\rho (s)) \|
\end{align*} 
where $C_0$ is a uniform constant independent of $n$. The limit $u : = \lim_{n \rightarrow \infty} v_n$ then exists in $C^{2, \alpha}_{loc}$ and by continuity satisfies
\begin{align*}
\mathcal{L} u : = E, \quad \int_{- \pi }^\pi u (s, \theta) d\theta = 0, 
\end{align*}
and the weighted H\"older estimates. The conclusion follows by setting $u : =\mathcal{R}_{\rho \, \alpha} [\mathcal{L}, E]$.
\end{proof}

As the operator $\cosh^2(s) \mathcal L_F$ does not satisfy the condition \eqref{CS0}, to use a perturbation technique we must modify this operator by cutting off the curvature term for $s$ near zero. 
\begin{definition}\label{ModifiedOperator}
Given $a \in \mathbb{R}$, we define the operator $\hat{\mathcal{L}}_{F\, a}$ as follows:
\[
\hat{\mathcal{L}}_{F \,a} : = \Delta_{\Omega} + 2 \psi[a - 1, a  ] (|s|) \cosh^{ - 2} (s).
\]
\end{definition}

From the definition and the properties of $\mathcal L_F$, it immediately follows that

\begin{lemma} 
\begin{enumerate}
\item $\hat{\mathcal{L}}_{F \, a} = \cosh^{2} (s)\mathcal{L}_F$ on $\Omega \cap\{|s| \geq a \}$.
\item For any $\epsilon> 0$ there exists an $a$ sufficiently large so that
\begin{align*}
\|\hat{\mathcal{L}}_{F \, a} - \Delta_\Omega, C^{0, \alpha} (\Omega, \cosh^{-1} (s))   \| \leq \epsilon.
\end{align*}
\item The operator $\hat{\mathcal{L}}_{F \, a}$ preserves the class of functions with zero average over meridian circles on $\Omega$.
In other words, given $f \in \mathring{C}^{k, \alpha} (\Omega)$, $\hat{\mathcal{L}}_{F \, a} f \in \mathring{C}^{k - 2, \alpha} (\Omega)$. 
\end{enumerate}
\end{lemma}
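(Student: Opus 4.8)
The plan is to read all three assertions directly off the explicit formula in Definition \ref{ModifiedOperator} together with the identity $\cosh^2(s)\mathcal{L}_F=\Delta_\Omega+2\cosh^{-2}(s)$ recorded in \eqref{FGeomData}; no surface geometry beyond this enters. For (1), I would unwind the cutoff $\psi[a-1,a](|s|)$. Since $L_{a-1,a}$ is affine with $L_{a-1,a}(a-1)=-3$ and $L_{a-1,a}(a)=3$, it has slope $6$ independent of $a$, so $\psi[a-1,a](t)=\psi_0(L_{a-1,a}(t))$ equals $1$ as soon as $L_{a-1,a}(t)\ge 1$, i.e. for $t\ge a-\tfrac13$, and vanishes for $t\le a-\tfrac23$. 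In particular $\psi[a-1,a](|s|)\equiv 1$ on $\{|s|\ge a\}$, and there $\hat{\mathcal{L}}_{F\,a}=\Delta_\Omega+2\cosh^{-2}(s)=\cosh^2(s)\mathcal{L}_F$.

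For (2), the difference operator is the zeroth order term $\hat{\mathcal{L}}_{F\,a}-\Delta_\Omega=2\psi[a-1,a](|s|)\cosh^{-2}(s)$, supported in $\{|s|\ge a-\tfrac23\}$ by the computation above. The key point is uniformity in $a$: $\psi_0$ has a fixed $C^1$ bound and $L_{a-1,a}$ has $a$-independent slope, so the factor $\psi[a-1,a](|s|)$ has localized $C^{0,\alpha}$ norm on any unit ball bounded independently of $a$ (the kink of $|s|$ at $s=0$ is irrelevant because the support avoids a fixed neighborhood of $s=0$ once $a$ is large). The factor $\cosh^{-2}(s)$ and all of its derivatives decay like $\cosh^{-2}(s)$, so its localized $C^{0,\alpha}$ norm on $B_1(p)$ is at most $C\cosh^{-2}(s(p))$. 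If $B_1(p)$ meets the support then $|s(p)|$ is at least $a$ minus a universal constant, so multiplying the resulting bound $C\cosh^{-2}(s(p))$ by the weight factor $\cosh(s(p))$ gives at most $C\cosh^{-1}(s(p))\le C\cosh^{-1}(a-c_0)$ for a universal $c_0$; choosing $a$ large makes this smaller than $\epsilon$, which is the claim.

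For (3), I would simply integrate $\hat{\mathcal{L}}_{F\,a}f$ over a meridian circle. Writing $\Delta_\Omega=\partial_s^2+\partial_\theta^2$ and using that $f$ is $2\pi$-periodic in $\theta$, the term $\int_{-\pi}^\pi f_{\theta\theta}\,d\theta=f_\theta(s,\pi)-f_\theta(s,-\pi)$ vanishes, while $\int_{-\pi}^\pi f_{ss}\,d\theta=\partial_s^2\!\int_{-\pi}^\pi f\,d\theta=0$ because $f$ has zero average along meridians; and the zeroth order term contributes $2\psi[a-1,a](|s|)\cosh^{-2}(s)\int_{-\pi}^\pi f\,d\theta=0$ since its $s$-dependent coefficient pulls out of the $\theta$-integral. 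Hence $\int_{-\pi}^\pi\hat{\mathcal{L}}_{F\,a}f\,d\theta=0$, i.e. $\hat{\mathcal{L}}_{F\,a}$ maps $\mathring{C}^{k,\alpha}(\Omega)$ into $\mathring{C}^{k-2,\alpha}(\Omega)$.

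None of the three steps presents a genuine obstacle — this is why the lemma is stated as an immediate consequence. The only point that warrants an explicit line of care is the uniformity in $a$ of the cutoff's Hölder norm used in part (2); that uniformity is precisely what lets $a$ be chosen after $\epsilon$, and it is exactly the property that makes the perturbative argument of Lemma \ref{CompactifySupport} applicable to $\hat{\mathcal{L}}_{F\,a}$ in the next step of the proof of Proposition \ref{flat_inverse}.
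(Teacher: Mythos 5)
Your verification is correct and is exactly the computation the paper has in mind — the paper states this lemma without proof as an immediate consequence of Definition \ref{ModifiedOperator} and the formula $\cosh^2(s)\mathcal{L}_F=\Delta_\Omega+2\cosh^{-2}(s)$, and your three steps (cutoff identically $1$ for $|s|\ge a-\tfrac13$, the $a$-uniform H\"older bound on the compactly supported coefficient times the $\cosh^{-1}$ weight, and the meridian integration) fill in precisely that reasoning. No gaps.
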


We are now ready to quantify the error induced by inverting $\hat{\mathcal L}_{F\, a}$ rather than $\cosh^2(s)\mathcal L_F$. We describe the error via the following map. Note throughout what follows that in all applications we will need, $\rho=3/4, \alpha=3/4$.

\begin{definition} \label{truncation_map}
 The map $\mathfrak{T}[-]:  \mathring{\mathcal{X}}^{k}_0 \rightarrow C^{k, 3/4} (\Omega, \cosh^{ 3/4}(s)) $ is given as follows:
\begin{align}\notag
\mathfrak{T}[E] (s, \theta) : = E -  \cosh^2 (s) \mathcal{L}_F  (s)\mathcal{R}_{\rho \, \alpha} [\hat{\mathcal{L}}_{F\, a} ,  E] (s, \theta).
\end{align}
\end{definition}

\begin{proposition}\label{TruncationMapProperties}
The following statements hold:
\begin{itemize}
\item[(1)] For $a$ sufficiently large, $\mathfrak{T}[-]$ is well-defined.
\item[(2)] $\mathfrak{T}[E]$ is compactly supported on the set $\Lambda \cap\{|s| \leq a \}$.
\item[(3)] There is a constant $C(\rho)$ depending only on $\rho$ so that 
\begin{align} \notag
\| \mathfrak{T}[E] : C^{0,   3/4} (\Lambda \cap\{|s| \leq a \}, 1 )\| \leq C(\rho) \| E: \mathcal{X}^{0} \|.
\end{align}
\item[(4)] $\mathfrak{T}$ maps $\mathring{\mathcal{X}}^{0 \, \perp}_0$ into $\mathring{\mathcal{X}}^{0 \, \perp}_0$ where $E \in \mathring{\mathcal{X}}^{0 \, \perp}_0$ if $E \in {\mathcal{X}}^{0 \, \perp}_0$ and $E$ has zero radial average along meridian circles.
\end{itemize}
\end{proposition}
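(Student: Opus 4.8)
The four items follow from unwinding the definitions and assembling estimates that are essentially already in hand. For (1), I would note that $\mathfrak{T}[E]$ is well-defined precisely when the composition $\mathcal{R}_{\rho\,\alpha}[\hat{\mathcal{L}}_{F\,a},-]$ is defined on $\mathring{\mathcal{X}}^0_0$ and produces a function to which $\cosh^2(s)\mathcal{L}_F$ may be applied. By the lemma following Definition \ref{ModifiedOperator}, for any target $\epsilon$ there is $a$ large enough that $\|\hat{\mathcal{L}}_{F\,a}-\Delta_\Omega : C^{0,\alpha}(\Omega,\cosh^{-1}(s))\|\leq\epsilon$; in particular, writing $\hat{\mathcal{L}}_{F\,a}=\Delta_\Omega+P(s)$ with $P(s)=2\psi[a-1,a](|s|)\cosh^{-2}(s)$, we have $\|P:C^{0,\alpha}(\Omega,1)\|\leq\|P:C^{0,\alpha}(\Omega,\cosh^{-1}(s))\|\leq\epsilon$ since $\cosh^{-1}(s)\leq 1$. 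Choosing $a$ so that $\epsilon<\epsilon_*$ from Lemma \ref{CompactifySupport} (with $\rho=\alpha=3/4$, $k=3$, and $K$ a fixed compact set containing $\Lambda\cap\{|s|\leq a\}$) gives the bounded operator $\mathcal{R}_{\rho\,\alpha}[\hat{\mathcal{L}}_{F\,a},-]:\mathring{C}^{0,\alpha}_0(\Lambda,\cosh^{\rho}(s))\to\mathring{C}^{2,\alpha}(\Omega,\cosh^\rho(s))$, whose output is $C^2_{loc}$, so $\cosh^2(s)\mathcal{L}_F$ applied to it lands in $C^{0,3/4}(\Omega,\cosh^{3/4}(s))$ as claimed.

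For (2), set $u:=\mathcal{R}_{\rho\,\alpha}[\hat{\mathcal{L}}_{F\,a},E]$, so $\hat{\mathcal{L}}_{F\,a}u=E$ on $\Omega$. On $\Omega\cap\{|s|\geq a\}$ we have $\hat{\mathcal{L}}_{F\,a}=\cosh^2(s)\mathcal{L}_F$ by part (1) of that lemma, hence $\cosh^2(s)\mathcal{L}_F u=\hat{\mathcal{L}}_{F\,a}u=E$ there, so $\mathfrak{T}[E]=E-\cosh^2(s)\mathcal{L}_F u=0$ on $\{|s|\geq a\}$. On $\Omega\setminus\Lambda$, that is $\{|s|>\arccosh(\ell)\}$, since $E$ is already supported in $\Lambda$ the term $E$ vanishes, and $a<\arccosh(\ell)$ (we may assume $\ell$ large enough for this), so $\mathfrak{T}[E]=-\cosh^2(s)\mathcal{L}_F u=-\hat{\mathcal{L}}_{F\,a}u=-E=0$ on this region too. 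Thus $\mathrm{supp}\,\mathfrak{T}[E]\subset\Lambda\cap\{|s|\leq a\}$. For (3), on the compact set $\Lambda\cap\{|s|\leq a\}$ the weight $\cosh^{3/4}(s)$ is bounded above and below by constants depending only on $a$ (hence on $\rho$ through the choice of $a$), so the unweighted $C^{0,3/4}$ norm is comparable to the $\cosh^{3/4}(s)$-weighted one; then $\|\mathfrak{T}[E]:C^{0,3/4}\|\leq\|E:C^{0,3/4}(\Omega,\cosh^{3/4})\|+\|\cosh^2(s)\mathcal{L}_F u:C^{0,3/4}\|$, and on the support the second term equals $\|\hat{\mathcal{L}}_{F\,a}u:C^{0,3/4}\|=\|E:C^{0,3/4}\|$, while on the complement it is controlled by the weighted bound on $u$ from Lemma \ref{CompactifySupport}; collecting constants gives $C(\rho)\|E:\mathcal{X}^0\|$.

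For (4), I need two preservation facts. First, $\hat{\mathcal{L}}_{F\,a}$ preserves zero average over meridian circles (part (3) of the lemma after Definition \ref{ModifiedOperator}), and $\mathcal{R}_{\rho\,\alpha}$ produces functions with zero meridian average by construction (Proposition \ref{LaplaceInverse}, and Lemma \ref{CompactifySupport} inherits this), so $u$ has zero meridian average, and since $\cosh^2(s)\mathcal{L}_F=\Delta_\Omega+2\cosh^{-2}(s)$ also preserves that class, $\cosh^2(s)\mathcal{L}_F u$ does too; hence $\mathfrak{T}[E]$ has zero meridian average, i.e.\ the ``zero radial average'' condition persists. Second, and this is the step I expect to require the most care, I must check $\mathfrak{T}[E]\in\mathcal{X}^{0\,\perp}_0$, i.e.\ that $\mathfrak{T}[E]$ is $L^2(\Omega)$-orthogonal to each of $\kappa_x,\kappa_y,\kappa_z$ and vanishes on $\partial\Lambda$. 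Vanishing on $\partial\Lambda$ is immediate from (2) once $a<\arccosh(\ell)$. For orthogonality, write $\int_\Omega\mathfrak{T}[E]\,\kappa\,d\mu_\Omega=\int_\Omega E\,\kappa\,d\mu_\Omega-\int_\Omega(\cosh^2(s)\mathcal{L}_F u)\,\kappa\,d\mu_\Omega$; the first term vanishes since $E\in\mathcal{X}^{0\,\perp}_0$, and for the second I integrate by parts as in the proof of Proposition \ref{KernelStuff}, using $\mathcal{L}_F\kappa=0$ and the decay of $u$ (the $\cosh^\rho(s)$-weighted, then $\ell^\rho\cosh^{-1}(s)$-weighted estimate away from $K$ from Lemma \ref{CompactifySupport}) to kill the boundary terms at $|s|\to\infty$, concluding $\int_\Omega(\cosh^2(s)\mathcal{L}_F u)\kappa\,d\mu_\Omega=0$. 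The subtlety is making sure the decay rate $\rho<1$ genuinely beats the linear growth $\cosh(s)$ of $u_x,u_y$ — but here we are pairing against the \emph{bounded} kernel functions $\kappa_x,\kappa_y,\kappa_z$ and against $u$ itself (which has the $\cosh^\rho$ decay), not against $u_x,u_y$, so the boundary integrals $\int\kappa\nabla^F_\eta u-u\nabla^F_\eta\kappa$ on $\{|s|=N\}$ are $O(\cosh^{\rho-1}(N))\to0$, and the argument closes.
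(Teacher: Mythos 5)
Your overall route is the same as the paper's (whose own proof is quite terse): (1) and (2) come from the definitions of $\mathfrak{T}$ and $\hat{\mathcal{L}}_{F\,a}$, (3) from Lemma \ref{CompactifySupport}, and (4) by combining preservation of the zero-meridian-average property with a Green's identity argument using $\mathcal{L}_F\kappa=0$ and the decay of $u:=\mathcal{R}_{\rho\,\alpha}[\hat{\mathcal{L}}_{F\,a},E]$ outside a compact set. Your (1), (2) and (4) are correct and more explicit than the paper's; the only stylistic difference in (4) is that the paper disposes of $\kappa_z$ purely via the zero meridian average and $\theta$-independence of $\kappa_z$, while you integrate by parts for all three kernel elements. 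That also works, but note that for $\kappa_z=\tanh(s)$, which is bounded and does not decay, you must invoke the $\ell^{\rho}\cosh^{-1}(s)$ estimate on $u$ outside $K$ from Lemma \ref{CompactifySupport}; the $\cosh^{\rho}(s)$ bound alone would leave a boundary contribution of size $\cosh^{\rho}(N)$, not $\cosh^{\rho-1}(N)$. Your assumption $a<\arccosh(\ell)$ in (2) is also implicitly needed by the paper, so flagging it is fine.

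The one step that is wrong as written is in (3): you assert that on the support of $\mathfrak{T}[E]$ the term $\cosh^{2}(s)\mathcal{L}_F u$ ``equals $\hat{\mathcal{L}}_{F\,a}u=E$''. The two operators agree only on $\{|s|\geq a\}$, i.e.\ precisely off the set $\Lambda\cap\{|s|\leq a\}$ on which you are estimating; on $\{|s|\leq a\}$ they differ by multiplication by $2\bigl(1-\psi[a-1,a](|s|)\bigr)\cosh^{-2}(s)$. The correct (and simplest) statement is the identity
\begin{align*}
\mathfrak{T}[E]\;=\;E-\hat{\mathcal{L}}_{F\,a}u-2\bigl(1-\psi[a-1,a](|s|)\bigr)\cosh^{-2}(s)\,u\;=\;-2\bigl(1-\psi[a-1,a](|s|)\bigr)\cosh^{-2}(s)\,u,
\end{align*}
which makes (2) immediate and yields (3) at once from the bound $\|u:C^{2,3/4}(\Omega,\cosh^{\rho}(s))\|\leq C(\rho)\|E:\mathcal{X}^{0}\|$ of Lemma \ref{CompactifySupport}, since $\cosh^{\rho-2}(s)\leq 1$. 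Your ingredients (the equation $\hat{\mathcal{L}}_{F\,a}u=E$ everywhere and the weighted bound on $u$) suffice, but as written you attribute the weighted bound to ``the complement'' of the support, where $\mathfrak{T}[E]$ vanishes and there is nothing to estimate, so the bound on $\{|s|\leq a\}$ is never actually closed; the displayed identity is the missing line.
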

\begin{proof}
Statements $(1)$ and $(2)$ are obvious from Definition \ref{truncation_map}. Statement $(3)$ follows directly from  Lemma \ref{CompactifySupport} and the  definition  of the maps $\mathcal{R}_{\rho, \alpha}[\mathcal{L}, -]$. To see $(4)$, note first that the zero average condition on meridians is preserved by  the definition of $\mathfrak{T}[-]$, Lemma \ref{CompactifySupport} and the definition of $\hat{\mathcal{L}}_{F \, a}$. We then immediately get that
\begin{align}\notag
\int_\Lambda \mathfrak{T} [E] \kappa_z d \mu_\Omega= 0,
\end{align} 
since $\kappa_z  = \kappa_z (s)$ is $\theta$-independent.  Additionally, we have
\begin{align}\notag
\int_\Lambda \mathfrak{T}[E] \kappa_x d\mu_{\Omega} & = \int_{\Lambda} E\kappa_x d \mu_{\Omega}  +  \int_\Lambda \mathcal{L}_F \hat{\mathcal{R}}_{\rho \, a} [E]  \kappa_x d\mu_{F} \\ \notag
\end{align}
Note that by assumption the first term above is zero when $E$ is in $\mathcal{X}^{k \, \perp}$. Considering the support of $E$ and the definition of $\Lambda$, for $L \geq \ell $, $\Lambda \subset \Omega_L$. (Recall the definition of $\Omega_L$ in (\ref{OmegaLDef}).) Therefore,
\begin{align}\notag
  \int_\Lambda \mathcal{L}_F \hat{\mathcal{R}}_{\rho \, a} [E]  \kappa_x d\mu_{F} & = \lim_{L \rightarrow \infty} \int_{\Omega_L} \mathcal{L}_F\mathcal{R}_{\rho \, \alpha} [\hat{\mathcal{L}}_{F\, a} ,  E]  \kappa_x d\mu_{F}  \\ \notag
  & =  \lim_{L \rightarrow \infty} \int_{- \pi}^\pi \left(\kappa_x (L, \theta) \nabla_s  \mathcal{R}_{\rho \, \alpha} [\hat{\mathcal{L}}_{F\, a} ,  E]  (L, \theta)- \mathcal{R}_{\rho \, \alpha} [\hat{\mathcal{L}}_{F\, a} ,  E]  (L, \theta)\nabla_s \kappa_x  (L, \theta)\right) d \theta,
\end{align}
where we have used that $\mathcal L_F\kappa_x=0$.  Considering the growth rates of $\mathcal{R}_{\rho \, \alpha} [\hat{\mathcal{L}}_{F\, a} ,  E]$ and $\kappa_x$, we see the right hand side above is equal to zero. The claim then follows immediately. 
\end{proof}

We are now ready to prove Proposition \ref{flat_inverse}.
\begin{proof}[Proof of Proposition \ref{flat_inverse}]
Let $E$ be a function in $\mathcal{X}^{0 \, \perp}_0$ and set $\beta : = \| E : \mathcal{X}^{0 \, \perp}\|$. Set
\begin{align}\notag
\bar{E} (s) : = \frac{1}{2\pi}\int_{- \pi}^\pi E(s, \theta) d\theta, \quad \mathring{E} (s, \theta) : = E(s, \theta) - \bar{E} (s).
\end{align}
It is then straightforward to verify that  $\mathring{E}$ is in $\mathring{\mathcal{X}}^{0\, \perp}_0$ and
\begin{align}\notag
\| \mathring{E} : \mathring{\mathcal{X}}^{0\, \perp}_0 \| \leq C \beta.
\end{align}
The equation 
\begin{align}\notag
\cosh^{2}(s) \mathcal{L}_F u = \mathfrak{T}[\mathring{E}]
\end{align}
on $\Lambda$ is equivalent to 
\begin{align} \label{SphereProblem}
\left(\Delta_{\mathbb{S}^2} + 2\right) u = 2\mathfrak{T}[\mathring{E}]/(|A_F|^2 \cosh^{2} (s))
\end{align}
on the sphere, where we have abused notation slightly and identified functions with their lifts to the sphere under the Gauss map of $F$. Since $\mathfrak{T}[\mathring{E}]$ is supported on the set $\Omega \cap \{ |s| \leq a\}$, 
\begin{align}\notag
\|\mathfrak{T}[\mathring{E}]/(|A_F|^2 \cosh^2(s)): L^2 (\mathbb{S}^2) \| \leq C \|\mathfrak{T}[\mathring{E}]/|A_F|^2: \mathring{\mathcal{X}}^{0 \, \perp} \| \leq C \beta.
\end{align}
Moreover, by Proposition \ref{TruncationMapProperties} $(4)$, $\mathfrak{T}[\mathring{E}]/(|A_F|^2 \cosh^{2}(s))$ is orthogonal to the kernel of $\Delta_{\mathbb{S}^2} + 2$ (Recall that the kernel is spanned by the coordinate functions $\bar{x}$, $\bar{y}$  and $\bar{z}$). Thus, there is a solution $u$ to (\ref{SphereProblem}) satisfying
\begin{align}\notag
\| u :  W^{2, 2} (\mathbb{S}^2)\| \leq C\beta.
\end{align}
Standard elliptic theory then implies
\begin{align}\notag
\| u \|_{2, \alpha} \leq C \|  \mathring E\|_{0, \alpha}. 
\end{align}
Put 
\begin{align}\notag
\mathring{v} : = u +  \mathcal{R}_{\rho \, \alpha} [\hat{\mathcal{L}}_{F\, a} ,  \mathring E].
\end{align}
Then $\mathring{v}$ satisfies $\cosh^{2}(s)\mathcal{L}_F \mathring{v} = \mathring E$ and the estimate
\begin{align}\notag
\|\mathring{v} : \mathring{\mathcal{X}}^{2 }\| \leq C \| \mathring E: \mathring{\mathcal{X}}^{0\, \perp} \|.
\end{align}
The function $\bar{E}$ can then be solved for by direct integration. In particular, in \cite{BK} we proved that the the expression
\begin{align} \label{MIT1}
\bar{v} (s) : = \left(\int_0^s \tanh^{-2} (s) \int_0^{s'} \tanh(s'') \cosh^{-2}(s) \bar{E}(s'') d s'' ds' \right) \tanh(s)
\end{align}
satisfies $\cosh^2(s)\mathcal{L}_F \bar{v} = \bar{E}$. Moreover, by Lemma 8 of \cite{BK}, making the appropriate modifications for the norms of interest here, we establish there exists a uniform $C>0$ such that
\[
\| \bar v:\mathcal X^2\| \leq C \|\bar E: \mathcal X^0\|.
\]
We conclude by setting
\begin{align}\notag
v : = \bar{v} + \mathring{v} \text{ and } \mathcal{R}_F^\perp [E] : = v.
\end{align}

\end{proof}

\subsection{Solving the linear problem on $\mathcal X_0^0$}
We now solve the more general linear problem by decomposing any $E \in \mathcal X_0^0$ into a $\theta$ independent function and a function $\mathring E$, with zero average along meridian circles. We invert the $\theta$ independent function directly using \eqref{MIT1}. To invert $\mathring E$, we modify the function by its projection onto the space $\underline \kappa$. The resulting function is orthogonal to $\underline \kappa$ and thus can be inverted via Proposition \ref{flat_inverse}.

\begin{proposition}  \label{FlatTotalInverse}
 There is a bounded  linear map 
\begin{align}\notag
\mathcal{R}_F[-] : \mathcal{X}^{0}_0 \rightarrow \mathcal{X}^{2}  \times\mathbb {R}^2
\end{align}
such that for $E \in \mathcal{X}^0_0$ and $(v, b_x, b_y) : = \mathcal{R}_F [E]$, 
\begin{align}\notag
\cosh^{2}(s) \mathcal{L}_F v= E - b_x w_x - b_y w_y.
\end{align}

\end{proposition}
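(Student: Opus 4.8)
The plan is to decompose an arbitrary $E \in \mathcal{X}^0_0$ exactly as in the proof of Proposition \ref{flat_inverse}: write $\bar E(s) := \frac{1}{2\pi}\int_{-\pi}^\pi E(s,\theta)\,d\theta$ and $\mathring E := E - \bar E$, so that $\mathring E \in \mathring{\mathcal X}^0_0$ with $\|\mathring E : \mathring{\mathcal X}^0_0\| \le C\|E:\mathcal X^0\|$ and $\|\bar E : \mathcal X^0\| \le C\|E:\mathcal X^0\|$. The $\theta$-independent piece $\bar E$ is handled by direct integration via the formula \eqref{MIT1}, producing $\bar v$ with $\cosh^2(s)\mathcal L_F \bar v = \bar E$ and $\|\bar v:\mathcal X^2\| \le C\|\bar E:\mathcal X^0\|$; this introduces no substitute-kernel terms, which is consistent with the fact that $w_x, w_y$ both have nonzero $\theta$-dependence so projecting onto them cannot interfere with the $\theta$-independent part. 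So the entire content is in inverting $\mathring E$.

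First I would compute the projection of $\mathring E$ onto the obstruction space $\underline\kappa$. Since $\mathring E$ has zero average along meridians, it is automatically $L^2(\Omega)$-orthogonal to $\kappa_z = \tanh(s)$ (which is $\theta$-independent), so only the $\kappa_x$ and $\kappa_y$ directions survive. Set
\begin{align}\notag
b_x := \int_\Omega \mathring E\, \kappa_x\, d\mu_\Omega, \quad b_y := \int_\Omega \mathring E\, \kappa_y\, d\mu_\Omega,
\end{align}
and define $\mathring E' := \mathring E - b_x w_x - b_y w_y$. By Proposition \ref{KernelStuff}, $\int_\Omega \kappa_x w_x\,d\mu_\Omega = \int_\Omega \kappa_y w_y\,d\mu_\Omega = 1$, and one must also check the cross terms $\int_\Omega \kappa_x w_y\,d\mu_\Omega$ and $\int_\Omega \kappa_y w_x\,d\mu_\Omega$ vanish — this follows from the $\theta$-parity of the integrands ($\cos\theta$ against $\sin\theta$ over a full period) — as well as $\int_\Omega \kappa_z w_x = \int_\Omega \kappa_z w_y = 0$ for the same reason. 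Hence $\mathring E'$ is $L^2$-orthogonal to all of $\underline\kappa$. One must also verify $\mathring E' \in \mathcal X^{0\,\perp}_0$: the functions $w_x, w_y = \cosh^2(s)\mathcal L_F u_x, u_y$ are supported where $\psi(s)$ is non-constant, i.e. on $1 \le |s| \le 2$, hence compactly supported away from $\partial\Lambda$ and bounded, so $\mathring E' $ still vanishes on $\partial\Lambda$ and lies in $\mathcal X^0$ with norm controlled by $\beta := \|E:\mathcal X^0\|$; and $\mathring E'$ retains zero meridian average since $w_x, w_y$ do (they are proportional to $\cos\theta$, $\sin\theta$). Then apply Proposition \ref{flat_inverse} (more precisely its restriction to the $\mathring{}$ subspace, which is what its proof actually produces) to get $\mathring v := \mathcal R_F^\perp[\mathring E']$ with $\cosh^2(s)\mathcal L_F \mathring v = \mathring E'$ and $\|\mathring v:\mathcal X^2\| \le C\beta$. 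Finally set $v := \bar v + \mathring v$ and $\mathcal R_F[E] := (v, b_x, b_y)$, so that $\cosh^2(s)\mathcal L_F v = \bar E + \mathring E' = E - b_x w_x - b_y w_y$, and linearity and boundedness of $E \mapsto (v,b_x,b_y)$ are immediate from the corresponding properties of each constituent map together with the bounds $|b_x| + |b_y| \le C\beta$ (Cauchy–Schwarz against the fixed $L^2$ functions $\kappa_x,\kappa_y$ over the compact support of $\mathring E'$... more precisely using that $\mathring E$ is controlled in $\mathcal X^0$ and $\kappa_x,\kappa_y$ are bounded, with the relevant integral effectively over a region where $\mathring E$ has $\mathcal X^0$-norm weighted by $\cosh^{3/4}(s)$, so integrability must be checked).

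The main obstacle is bookkeeping around that last integrability point and the orthogonality cross-terms: one needs $\int_\Omega \mathring E\,\kappa_x\,d\mu_\Omega$ to actually converge, given that $\mathring E$ is only bounded by $\beta\cosh^{3/4}(s)$ while $\kappa_x = \cos\theta/\cosh(s)$ and $d\mu_\Omega = \cosh^2(s)\,ds\,d\theta$ — the product grows like $\cosh^{7/4}(s)$, which is not integrable over all of $\Omega$. This is resolved by recalling that $E \in \mathcal X^0_0$ is supported on $\Lambda = \Omega \cap \{|s| \le \arccosh(\ell)\}$ (it vanishes on $\partial\Lambda$ and the spaces are defined on $\Lambda$), so the integral is over a compact set and one gets $|b_x| + |b_y| \le C\ell^{7/4}\beta$ or similar; the precise power is not important for the qualitative statement of boundedness, only that the constant may depend on $\ell$. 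I would state the proposition's constant as allowed to depend on $\ell$ (consistent with the rest of the linear theory in this section) and otherwise the proof is a routine assembly of the pieces already in place.
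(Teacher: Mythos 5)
Your argument follows the paper's proof essentially step for step: the same splitting $E=\bar E+\mathring E$, the formula \eqref{MIT1} for the $\theta$-independent part, explicit projection coefficients $b_x=\int\mathring E\,\kappa_x\,d\mu_\Omega$, $b_y=\int\mathring E\,\kappa_y\,d\mu_\Omega$ normalized by Proposition \ref{KernelStuff} (with the cross terms killed by $\theta$-parity), and then Proposition \ref{flat_inverse} applied to $\mathring E-b_xw_x-b_yw_y$; the assembly $v=\bar v+\mathring v$ is exactly the paper's.

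Two corrections to your bookkeeping. First, your integrability worry rests on taking $d\mu_\Omega=\cosh^2(s)\,ds\,d\theta$, but $\Omega$ is the flat cylinder and $d\mu_\Omega=ds\,d\theta$; this is precisely the convention used in the proof of Proposition \ref{KernelStuff}, where $\cosh^2(s)\,d\mu_\Omega=d\mu_F$. Hence $|\mathring E\,\kappa_x|\leq C\beta\cosh^{3/4}(s)\cosh^{-1}(s)=C\beta\cosh^{-1/4}(s)$, which is integrable with a constant independent of $\ell$, so $|b_x|+|b_y|\leq C\beta$ exactly as the paper asserts. You should not concede an $\ell$-dependent (e.g.\ $\ell^{7/4}$) constant: contrary to your remark, the constants throughout the linear theory are $\ell$-uniform (the weights carry the $\ell$-dependence), and this uniformity is used later, e.g.\ in \eqref{SMP1}--\eqref{SMP4} of Proposition \ref{SchauderMapProperties}, where an extra power of $\ell$ would destroy the smallness needed for the fixed point. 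Second, $w_x,w_y$ are not supported in $\{1\leq|s|\leq2\}$: for $|s|\geq2$ one has $u_x=\frac{1}{4\pi}\cos\theta\cosh(s)$ and $\cosh^2(s)\mathcal L_Fu_x=\frac{1}{2\pi}\cos\theta\cosh^{-1}(s)\neq0$. This tail is harmless for your parity, zero-meridian-average, and $\mathcal X^0$-norm arguments (it decays like $\cosh^{-1}(s)$), but it means $E-b_xw_x-b_yw_y$ vanishes on $\partial\Lambda$ only up to an exponentially small term--a point the paper's own proof also passes over when placing $E^\perp$ in $\mathcal X^{0\,\perp}_0$--so either cut off $w_x,w_y$ near $\partial\Lambda$ or note that the boundary-vanishing hypothesis is only used qualitatively.
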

\begin{proof}
Given $E \in \mathcal{X}^0_0 $, we set 
\begin{align}\notag
\bar{E}(s) : =  \int_{-\pi}^\pi E(s, \theta) d \theta, \quad \mathring{E} (s, \theta) = E(s, \theta) - \bar{E}(s). 
\end{align}
Then $\mathring{E}$ is in $\mathring{\mathcal{X}}^0_0$. Since $\kappa_z$ is $\theta$-independent and $\mathring E$ has zero average along meridians,
\[
\int_{\Lambda} \mathring E \, \kappa_z \, d \mu_\Omega = 0.
\]Moreover, the definition of the function space $\mathcal X^0$ and the definition of $\Lambda$ together imply
\[
\left|\int_{\Lambda} \mathring E \,  \kappa_x \,  d \mu_\Omega \right|  + \left|\int_{\Lambda} \mathring E \,  \kappa_y \,  d \mu_\Omega \right| \leq C \| \mathring E: \mathring{\mathcal{X}}^{0} \|.
\]

Thus, there exist constants $b_x$ and $b_y$ with 
\begin{align}\notag
|b_x|, \quad |b_y| \leq C \|E : \mathcal{X}^{0} \|
\end{align}
so that $ E^\perp : = \mathring E - b_x w_x - b_y w_y$ lies in $\mathcal{X}^{0 \, \perp}_0$. We then set
\begin{align}\notag
v^\perp : = \mathcal{R}^\perp_F [E^\perp]
\end{align}
and we define $\bar{v}(s)$ by expression (\ref{MIT1}). The function $v : = v^\perp + \bar{v}$ then solves
\begin{align}\notag
\cosh^{2}(s) \mathcal{L}_F v = E - b_x w_x - b_y w_y 
\end{align}
and
\[\|v:\mathcal X^2\|+ |b_x| + |b_y| \leq C\|E:\mathcal X_0^0\|.
\]
\end{proof}

\section{Finding exact solutions}   \label{FindingExactSolutions}

Recall that finding a graph over $G$ so that the resulting surface has $H=0$ is equivalent to finding a function $u \in C^{2,\alpha}$ with $Q[u]=0$. We will solve this problem via standard gluing methods, invoking a fixed point theorem for a given map $\Psi$ from some Banach space we designate.  Estimates for the map will require a strong understanding of $Q[u]$ and to that end we first consider a natural decomposition of $Q$.
\begin{definition} \label{QLinQuadDefs}
Let $\mathcal{L}_Q[u]$ denote the linearization of the operator $Q$ at $0$, and set
\begin{align}\notag
R_Q [u]  : = Q[u] - Q[0]-\mathcal{L}_Q[u]. 
\end{align}
\end{definition}
We first record linear estimates, which are controlled by properties of the immersions $F$ and $G$.
\begin{lemma}
Given an  anti-symmetric matrix $\RRR$ and $\xi \in \mathbb{R}$, choose $\delta >0$ such that $C(1+|\RRR| + \xi) \delta < \tilde \epsilon$ where $\tilde \epsilon$ is from Proposition \ref{HQEstimates} and $C$ is a universal constant arising from the norm bounds in the linear problem.
Then 
\begin{align} \label{XSP0}
\|Q [0]: \mathcal{X}^{0} \|  \leq C \delta \ell^{1/4}|\RRR|
\end{align}and for $u \in \mathcal X^2$,
\begin{equation}\label{linearerroreq}
\| \mathcal{L}_Q[u] - \cosh^{2}(s) \mathcal{L}_Fu :\mathcal X^0\| \leq C\delta(1 + |\RRR|+|\xi|)  \|u:\mathcal X^2\|.
\end{equation}
\end{lemma}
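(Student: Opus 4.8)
The plan is to treat both estimates as consequences of the homogeneous-quantity machinery of Proposition \ref{HQEstimates}, applied to the mean curvature $H$ (degree $d=-1$) on the normalized immersion $\tilde{\underline\nabla}[G]$, with variation field $\mathcal E_\delta[u+u_0]$. First I would dispose of \eqref{XSP0}: by Proposition \ref{normalized_problem_properties} we already have $\|Q[0]:C^{j,\alpha}(\Omega^*,\cosh(s))\|\le C\delta|\RRR|$, so it only remains to convert the weight $\cosh(s)$ into the weight $\cosh^{3/4}(s)$ appropriate to $\mathcal X^0$. On $\Lambda$ we have $\cosh(s)\le \ell$, hence $\cosh(s)=\cosh^{3/4}(s)\cosh^{1/4}(s)\le \ell^{1/4}\cosh^{3/4}(s)$; multiplying the bound of Proposition \ref{normalized_problem_properties} through by $\ell^{1/4}$ and noting $\Lambda\subset\Omega^*$ (which holds precisely because $\ell<\cosh(\epsilon_0(\delta|\xi|)^{-1/4})$ in Definition \ref{X_spaces_def}) gives \eqref{XSP0}. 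The only subtlety is matching the Hölder exponent $\alpha=3/4$, which is harmless since $\Omega^*$-estimates hold for every $\alpha\in(0,1)$.

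For \eqref{linearerroreq} I would expand $Q[u]$ to first order in $u$. Writing $Q[u]=\cosh^2(s)\,H(\tilde{\underline\nabla}[G]+\mathcal E_\delta[u_0]+\mathcal E_\delta[u])$ (using linearity of $\mathcal E_\delta$ in its argument, $\mathcal E_\delta[u+u_0]=\mathcal E_\delta[u_0]+\mathcal E_\delta[u]$), Taylor expansion \eqref{TR2} at the base point $\underline\nabla_*:=\tilde{\underline\nabla}[G]+\mathcal E_\delta[u_0]$ in the direction $\mathcal E_\delta[u]$ gives
\[
\mathcal L_Q[u]=\cosh^2(s)\,\left.DH\right|_{\underline\nabla_*}(\mathcal E_\delta[u]).
\]
The claim is that this differs from $\cosh^2(s)\mathcal L_F u$ by something of size $\delta(1+|\RRR|+|\xi|)\|u:\mathcal X^2\|$. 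I would prove this by a two-step comparison: (i) replace the base point $\underline\nabla_*$ by $\nabla F$ (equivalently compare $\left.DH\right|_{\underline\nabla_*}$ with $\left.DH\right|_{\nabla F}$), and (ii) replace the variation field $\mathcal E_\delta[u]$ by the honest second-order variation $\underline\nabla(u\nu_F)$ that produces $\mathcal L_F u$. For (i) one uses that $DH$ is itself a homogeneous quantity (degree $-2$) which is Lipschitz on compact subsets of $E_0$, together with the estimates $|\underline\nabla_*-\nabla F|\le C\delta(|\RRR|+|\xi|)\cosh(s)$ from Lemma \ref{close_to_a_straight_line}, the bound $|\tilde\nabla^{(k)}G_0-\nabla^{(k)}F|\le C\delta|\xi|\cosh(s)$ quoted in the proof of Lemma \ref{small_graphs_give_regular_immersions}(3), and $\|u_0:C^{j,\alpha}(\cdot,s^2)\|\le C\delta$ from Theorem \ref{BK1Theorem}; all of these carry the factor $\delta$, and dividing by $|\nabla|^d\sim\cosh^{-2}(s)$ and multiplying by $\cosh^2(s)$ leaves a net weight of $\cosh^{3/4}(s)$ paired against $\|u:\mathcal X^2\|$. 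For (ii) one computes $\mathcal E_\delta[u]-e^{-\delta\xi\theta}R(\theta)\underline\nabla(e^{\delta\xi\theta}u\nu_G)$ — in fact these are equal by Definition \ref{FunctionCommutatorDef}, so the only genuine discrepancy is $\nu_G$ versus $\nu_F$ and the extra $e^{\pm\delta\xi\theta}$ factors that come from the $\theta$-derivatives; the normal difference $\tilde\nu_G-\nu_{G_0}$ and $\nu_{G_0}-\nu_F$ are each $O(\delta|\RRR|)$ and $O(\delta|\xi|)$ respectively (Lemma \ref{unit_normal_difference} and Lemma \ref{small_graphs_give_regular_immersions}(3)), while the $e^{\delta\xi\theta}$ factors contribute terms weighted by $\delta|\xi|$ times derivatives of $u$, which are absorbed into $\|u:\mathcal X^2\|$.

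The main obstacle, I expect, is the careful bookkeeping of weights in step (i)–(ii): one must verify that every error term genuinely carries a factor of $\delta(1+|\RRR|+|\xi|)$ and that the composite weight coming from the homogeneity degree of $H$ (namely $\cosh^2(s)\cdot\cosh^{-2}(s)=1$) combines with the $\mathcal X^2\to\mathcal X^0$ mapping (which loses $\cosh^{3/4}(s)$ but the target $\mathcal X^0$ regains $\cosh^{3/4}(s)$) to close. This is exactly the content of Proposition \ref{HQEstimates} applied with $\Phi=DH$, $\underline\nabla=\nabla F$, and $\mathcal E$ the various small perturbation fields listed above, provided one first checks the smallness hypothesis $\|\mathcal E:C^{j,\alpha}(\cdot,\mathfrak a|\nabla|)\|\le C\tilde\epsilon$ — which follows from the standing assumption $C(1+|\RRR|+\xi)\delta<\tilde\epsilon$ in the hypothesis of the lemma and Lemma \ref{small_graphs_give_regular_immersions}. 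Once the hypotheses are in place, the estimate is a direct application of Proposition \ref{HQEstimates} with $k=0$, $d=-2$ for the linear term, summed over the finitely many sources of error.
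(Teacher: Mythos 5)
Your proposal is correct and follows essentially the same route as the paper: \eqref{XSP0} by converting the $\cosh(s)$ weight of Proposition \ref{normalized_problem_properties} into $\ell^{1/4}\cosh^{3/4}(s)$ on $\Lambda$, and \eqref{linearerroreq} by the same two-term splitting the paper uses (change of base point from $\tilde{\underline{\nabla}}[G]+\mathcal{E}_\delta[u_0]$ to $\underline{\nabla}F$, then change of variation field from $\mathcal{E}_\delta[u]$ to $\underline{\nabla}[u\,\nu_F]$), each estimated via Proposition \ref{HQEstimates} with $d=-2$ together with Lemma \ref{close_to_a_straight_line}, Lemma \ref{small_graphs_give_regular_immersions}, Lemma \ref{unit_normal_difference} and the $u_0$ bound from Theorem \ref{BK1Theorem}. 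No substantive differences from the paper's argument.
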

\begin{proof}Proposition \ref{normalized_problem_properties} implies
\begin{align*}
\|Q[0] \|_{j, 3/4} (s, \theta) < C\delta |\RRR| \cosh(s)
\end{align*}
and thus
\begin{align*}
\cosh^{-3/4} (s)\| Q[0]\|_{j, 3/4} (s, \theta)\leq C \delta |\RRR|\cosh^{1 - 3/4} (s) \leq  C \delta \ell^{1/4} |\RRR|.
\end{align*}For the second estimate,
recall that 
\begin{align*}
Q[u] = \cosh^{2}(s)H(\tilde{\underline{\nabla}} [G] + \mathcal{E}_{\delta} [u_{0} + u])
\end{align*}
Then for $\underline{\nabla}_0 : = \tilde{\underline{\nabla}}[G] + \mathcal{E}_\delta [u_0]$,
\begin{align*} 
&\mathcal{L}_Q [u] = \dot{Q}[u] =  \cosh^2(s) \left. D H \right|_{\underline{\nabla}_0} (\mathcal{E}_{\delta}[u]) \\
&\mathcal{L}_F u = \left. D H \right|_{\underline{\nabla}F} (\underline{\nabla} [u \,  \nu_F]).
\end{align*}
We then write
\begin{align*}
\mathcal{L}_Q [u] - \cosh^{2} (s) \mathcal{L}_F u & = \cosh^{2} (s) \left\{ \left. D H\right|_{\underline{\nabla}_0}(\mathcal{E}_{\delta}[u]) - \left. D H \right|_{\underline{\nabla} F} (\mathcal{E}_{\delta}[u])\right\} 
- \cosh^{2} (s) \left. D H \right|_{\underline{\nabla} F} \left(\underline{\nabla}[ u\,\nu_F] - \mathcal{E}_{\delta}[u] \right) \\
& : = I + II.
\end{align*}
Lemma \ref{close_to_a_straight_line},   \eqref{G_0Def} and \eqref{G_0Normalized}, and the triangle inequality imply
\begin{align} \label{SSP3}
\| \underline{\tilde{\nabla}} [G] - \underline{\nabla} F : C^{k} (\Lambda, \cosh(s))  \|
\leq C \delta + C\delta |\RRR|.
\end{align}
Moreover, the estimate for $u_{0 }$ in Theorem \ref{BK1Theorem} (with $\delta \xi$ replacing $\delta$) and Definition \ref{FunctionCommutatorDef} imply
\begin{align} \label{SSP5}
\| \mathcal{E}_{\delta} [u_{0 }] : C^{0, \alpha}(\Lambda,\cosh(s) )\| \leq C \delta|\xi|.
\end{align}
Combining \eqref{SSP3} and \eqref{SSP5}, we note that
\begin{align} \notag
 \| \underline{\nabla}_0 - \underline{\nabla} F : C^{0, \alpha} (\Lambda, \cosh(s))\|
 \leq C \delta(1 + |\RRR|+|\xi|)  < \tilde \epsilon. 
\end{align}
Thus we may apply Proposition \ref{HQEstimates}  with $\underline{\nabla}  = \underline{\nabla} F$ and $\mathcal{E}  =  \underline{\nabla}_0 - \underline{\nabla} F$. Notice here that $\mathfrak{ a}(\underline \nabla) =1$, $|\nabla| = \cosh(s)$, and $d=-2$. Thus
\begin{align}\notag
\|  I\|_{0, \alpha} \leq C \cosh^{-1}(s)\| \mathcal E\|_{0, \alpha} \|  u \|_{2, \alpha} \leq C\delta  \cosh^{-1}(s)(1 + |\RRR|+|\xi|)  \| u\|_{2, \alpha}.
\end{align}
Again using Definition \ref{FunctionCommutatorDef} and estimates on the derivatives of $\nu_G$, we observe that 
\begin{align*}
\| \underline{\nabla}[ u\,\nu_F] - \mathcal{E}_{\delta}[u] \|_{0, \alpha} \leq C\delta(1+|\RRR|)  \| u \|_{2, \alpha}
\end{align*}
and it follows that $\| II \|_{0, \alpha} \leq C\delta(1+|\RRR|)  \| u\|_{2, \alpha}$.
\end{proof}
We now define the Banach space on which we will solve the fixed point theorem.

\begin{definition} \label{SchauderSet}
For $\zeta \gg 1$, set
\begin{align} \notag
\Xi : = \{ (u, b_x, b_y ) \in \mathcal{X}^{2} \times \mathbb R^2: \|u : \mathcal{X}^2\| \leq \zeta \delta \ell^{1/4} |\RRR|, |b_x|, |b_y| \leq \zeta \delta \ell^{1/4}|\RRR| \}.
\end{align}
\end{definition}
\noindent Notice that by definition, $\Xi$ is a compact, convex subset of $\mathcal{X}^2 \times \Real^2$. Thus, we are in a setting where it is natural to apply Schauder's fixed point theorem.

Now we control the higher order terms of $Q[u]$ where $u = v + b_xu_x + b_yu_y$ with $(v,b_x,b_y) \in \Xi$.
\begin{proposition} \label{SchauderSetProps}
Given $\zeta \gg 1$, $\xi \in \Real$, and a non-trivial anti-symmetric matrix $\RRR$, choose any $0<\delta|\xi|<\delta_0$ and $\ell>16$ such that $C \delta(1+|\RRR|+|\xi|)\ell^{5/4} <\min\{1/(4\zeta),\tilde \epsilon /\zeta\}$. Here $C$ is a universal constant arising from the norm bounds in the linear problem and $\delta_0$ comes from Theorem \ref{BK1Theorem}. Given $(v, b_x, b_y) \in \Xi$, and $f : = v + b_x u_x + b_y u_y $
\begin{align} \notag
\| R_Q[ f ]: \mathcal{X}^{0} \| \leq C \zeta^2 \delta^2  |\RRR|^2  \ell^{3/4} .
\end{align}
\end{proposition}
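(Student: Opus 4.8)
The plan is to identify $R_Q[f]$ with the second order Taylor remainder of the mean curvature function $H$ (a homogeneous quantity of degree $-1$) and then to estimate it directly via Proposition \ref{HQEstimates}. Since the map $u \mapsto \mathcal{E}_\delta[u]$ of Definition \ref{FunctionCommutatorDef} is linear, the formula \eqref{Qdef} rewrites as $Q[u] = \cosh^2(s)\,H(\underline{\nabla}_0 + \mathcal{E}_\delta[u])$ with the $u$-independent base point $\underline{\nabla}_0 := \tilde{\underline{\nabla}}[G] + \mathcal{E}_\delta[u_0]$; comparing with Definition \ref{QLinQuadDefs} and the Taylor identity \eqref{TR2} at order $k=1$ yields the clean expression $R_Q[f] = \cosh^2(s)\,R^{(1)}_{H,\mathcal{E}_\delta[f]}(\underline{\nabla}_0)$, so everything reduces to estimating this remainder.

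To apply Proposition \ref{HQEstimates} I first control the base point $\underline{\nabla}_0$. By Lemma \ref{small_graphs_give_regular_immersions}, $|\tilde{\nabla}[G]| \asymp \cosh(s)$, $\ell_{0,3/4}(\tilde{\underline{\nabla}}[G]) \le C$, and $\mathfrak{a}(\tilde{\nabla}[G])$ is close to $1$; since $\mathcal{E}_\delta[u_0]$ is of size $O(\delta|\xi|)$ in the $C^{0,3/4}(\Lambda,\cosh(s))$ norm (by Theorem \ref{BK1Theorem} and Definition \ref{FunctionCommutatorDef}), Proposition \ref{still_regular} shows $\underline{\nabla}_0$ remains an immersion with $|\nabla_0| \asymp \cosh(s)$, $\mathfrak{a}(\nabla_0) \asymp 1$, and $\ell_{0,3/4}(\underline{\nabla}_0) \le C$. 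The remaining hypothesis is smallness of $\mathcal{E}_\delta[f]$ in the weight $\mathfrak{a}(\nabla_0)|\nabla_0| \asymp \cosh(s)$, which I extract in the next step from the smallness assumption $C\delta(1+|\RRR|+|\xi|)\ell^{5/4} < \tilde{\epsilon}/\zeta$.

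Next I estimate $\mathcal{E}_\delta[f]$. For $(v,b_x,b_y) \in \Xi$ (Definition \ref{SchauderSet}) and $f = v + b_x u_x + b_y u_y$, the constraint defining $\Xi$ gives $\|v\|_{2,3/4}(p) \le \zeta\delta\ell^{1/4}|\RRR|\cosh^{3/4}(s)$, while the explicit forms of $u_x,u_y$ give $\|b_x u_x + b_y u_y\|_{2,3/4}(p) \le C\zeta\delta\ell^{1/4}|\RRR|\cosh(s)$; since $\cosh^{3/4}(s)\le\cosh(s)$ these combine to $\|f\|_{2,3/4}(p) \le C\zeta\delta\ell^{1/4}|\RRR|\cosh(s)$. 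As $\mathcal{E}_\delta[f]$ is built from $f$ and its derivatives up to order two together with $\nu_G$ and its derivatives (all $O(1)$) and $R(\theta),R'(\theta),R''(\theta)$ (the latter two $O(\delta|\RRR|)$ by \eqref{theta_ind_eq}--\eqref{theta_ind_eq_2}), the same bookkeeping used for the term $II$ in the preceding lemma gives $\|\mathcal{E}_\delta[f]\|_{0,3/4}(p) \le C\|f\|_{2,3/4}(p) \le C\zeta\delta\ell^{1/4}|\RRR|\cosh(s)$, hence $\|\mathcal{E}_\delta[f] : C^{0,3/4}(\Lambda,\cosh(s))\| \le C\zeta\delta\ell^{1/4}|\RRR| \le \tilde{\epsilon}$ by hypothesis.

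Finally, Proposition \ref{HQEstimates} with $\Phi = H$, $d = -1$, $\underline{\nabla} = \underline{\nabla}_0$, $\mathcal{E} = \mathcal{E}_\delta[f]$ and $k = 1$ gives $\|R^{(1)}_{H,\mathcal{E}_\delta[f]}(\underline{\nabla}_0) : C^{0,3/4}(\Lambda,\cosh^{-1}(s))\| \le C\zeta^2\delta^2\ell^{1/2}|\RRR|^2$. Multiplying by the $\cosh^2(s)$ prefactor and re-weighting to the $\mathcal{X}^0$ weight $\cosh^{3/4}(s)$ leaves a residual factor $\cosh^{2-1-3/4}(s) = \cosh^{1/4}(s)$, which is $\le \ell^{1/4}$ on $\Lambda$; hence $\|R_Q[f] : \mathcal{X}^0\| \le C\zeta^2\delta^2\ell^{3/4}|\RRR|^2$, as claimed. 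I expect the only delicate point to be exactly this last exponent bookkeeping: the functions $u_x,u_y$ grow like $\cosh(s)$, faster than the $\mathcal{X}^2$ weight $\cosh^{3/4}(s)$, and it is this discrepancy — squared by the quadratic remainder and then amplified by the $\cosh^{2}(s)$ factor — that produces the final power $\ell^{3/4}$ and likewise forces the $\ell^{5/4}$ appearing in the smallness hypothesis.
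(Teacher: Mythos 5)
Your proposal is correct and follows essentially the same route as the paper: identify $R_Q[f]=\cosh^2(s)\,R^{(1)}_{H,\mathcal{E}_\delta[f]}(\underline{\nabla}_0)$, bound $\|\mathcal{E}_\delta[f]\|_{0,3/4}\leq C\zeta\delta|\RRR|\ell^{1/4}\cosh(s)$ from the definition of $\Xi$ and the growth of $u_x,u_y$, then apply Proposition \ref{HQEstimates} with $\Phi=H$, $d=-1$, $k=1$ and use $\cosh(s)\leq\ell$ on $\Lambda$ to collect the final power $\ell^{3/4}$. Your added verification that $\underline{\nabla}_0$ remains a controlled immersion is a point the paper leaves implicit, but otherwise the arguments coincide.
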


\begin{proof}

The definition of $R_Q[f]$ implies
\begin{align} \notag
R_Q[f] = \cosh^2 (s) R_{H, \mathcal{E}_\delta [f]}^{(1)} (\underline{\nabla}_0).
\end{align}
From Definition \ref{FunctionCommutatorDef},
\begin{align} \notag
\|\mathcal{E}_{\delta}[f] \|_{0, \alpha}&  \leq C \| f \|_{2, \alpha} \leq C\| v\|_{2, \alpha} + \left(|b_x| + |b_y| \right) \cosh(s) \\ \notag
&\leq C\zeta  \delta |\RRR| \ell^{1/4}\left( \cosh^{3/4} (s) + \cosh(s) \right)\\&\leq C\zeta  \delta |\RRR| \ell^{1/4}\cosh(s)\notag.
\end{align}
Applying Proposition \ref{HQEstimates} with $\underline{\nabla} = \underline{\nabla}_0$ and $\mathcal{E} = \mathcal{E}_\delta[f]$,  $\Phi=H$ and $d=-1$ implies
\begin{align*}
\cosh(s) \| R_Q[f] \|_{0, \alpha} \leq C  \zeta^2  \delta^2 |\RRR|^2 \ell^{1/2} \cosh^2(s).
\end{align*}
As $|s| \leq \arccosh (\ell)$ on $\Lambda$, the estimate immediately follows.
\end{proof}

Using all of the previous estimates, we define a map $\Psi$ that takes $\Xi$ into $\Xi$. As previously mentioned, because we chose to simplify the linear problem by considering only inhomogeneous terms with zero boundary data on $\Lambda$, we now have a small amount of technical work to do. In particular, to invert $Q[u]$, we need to first cut it off by a function $\psi'$. 
\begin{proposition} \label{SchauderMapProperties}
For $\psi(s):=\psi [\arccosh (\ell/2),\arccosh ( \ell/4)](|s|)$, and $\psi'(s):= \psi[\arccosh (\ell), \arccosh (\ell/2)](|s|)$,
set
\begin{align}\notag
\Psi[v, b_1, b_2] = (\psi v, b_1, b_2) - \mathcal{R}_F[\psi'Q[ \psi v + b_1 u_x + b_2 u _y]]
\end{align}
Given $\zeta \gg 1$, $\xi \in \Real$, and a non-trivial anti-symmetric matrix $\RRR$, for any $0<\delta|\xi|<\delta_0$ and $\ell>16$ such that $C \delta(1+|\RRR|+|\xi|)\ell^{1/2} <\min\{1/(4\zeta),\tilde \epsilon /\zeta\}$, $\Psi(\Xi) \subset \Xi$.

\end{proposition}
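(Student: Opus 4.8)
The plan is to unwind the definition of $\Psi$ using linearity of $\mathcal R_F$, peel off the part on which $\mathcal R_F$ genuinely inverts $\cosh^2(s)\mathcal L_F$, and verify that everything left over has $\mathcal X^2\times\Real^2$–norm well below $\zeta\delta\ell^{1/4}|\RRR|$ once $\zeta$ is fixed large, so that $\Xi$ is preserved. Fix $(v,b_1,b_2)\in\Xi$ and set $f:=\psi v+b_1u_x+b_2u_y$. I first record three bookkeeping facts: the cutoffs $\psi,\psi'$ are smooth functions of $|s|$ whose transition regions have length bounded below by a universal constant for $\ell>16$, so multiplication by $\psi$ or $\psi'$ is bounded on each $\mathcal X^k$ with $\ell$–independent norm; the functions $\psi v,u_x,u_y,w_x,w_y$ are all supported in $\{|s|\le\arccosh(\ell/2)\}$, where $\psi'\equiv1$, so $\psi'Q[f]$ is supported in $\Lambda$, vanishes near $\partial\Lambda$, and is an admissible argument for $\mathcal R_F$; and, since $\cosh s\le\ell$ on $\Lambda$, $\|u_x:\mathcal X^2\|+\|u_y:\mathcal X^2\|\le C\ell^{1/4}$, whence $\|f:\mathcal X^2\|\le C\zeta\delta\ell^{1/2}|\RRR|$.

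Next I would decompose $Q[f]=Q[0]+\mathcal L_Q[f]+R_Q[f]$ (Definition \ref{QLinQuadDefs}), then split $\mathcal L_Q[f]=\cosh^2(s)\mathcal L_F f+\bigl(\mathcal L_Q-\cosh^2(s)\mathcal L_F\bigr)[f]$ and $\cosh^2(s)\mathcal L_F f=\cosh^2(s)\mathcal L_F(\psi v)+b_1w_x+b_2w_y$. By Proposition \ref{KernelStuff} one has $\mathcal R_F[b_1w_x+b_2w_y]=(0,b_1,b_2)$; and because $\psi v$ vanishes to all orders near $\partial\Lambda$, the integration by parts behind Proposition \ref{KernelStuff} carries no boundary term, so $\cosh^2(s)\mathcal L_F(\psi v)$ is $L^2$–orthogonal to $\underline\kappa$ and the $\Real^2$–components of $\mathcal R_F[\cosh^2(s)\mathcal L_F(\psi v)]$ vanish. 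Using linearity of $\mathcal R_F$ and $\psi'\equiv1$ on the relevant supports, these identities collapse $\Psi$ to
\[
\Psi[v,b_1,b_2]=\bigl(\psi v-\mathcal R_F^{(v)}[\cosh^2(s)\mathcal L_F(\psi v)],\,0,\,0\bigr)-\mathcal R_F\bigl[\psi'\bigl(Q[0]+(\mathcal L_Q-\cosh^2(s)\mathcal L_F)[f]+R_Q[f]\bigr)\bigr],
\]
where $\mathcal R_F^{(v)}$ and $\mathcal R_F^{(b_j)}$ denote the $\mathcal X^2$– and $\Real$–components of $\mathcal R_F$.

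The second term is handled by the estimates already at hand: $\|Q[0]:\mathcal X^0\|\le C\delta\ell^{1/4}|\RRR|$ by \eqref{XSP0}; $\|(\mathcal L_Q-\cosh^2(s)\mathcal L_F)[f]:\mathcal X^0\|\le C\delta(1+|\RRR|+|\xi|)\|f:\mathcal X^2\|$ by \eqref{linearerroreq}, which together with the bound on $\|f:\mathcal X^2\|$ and the hypothesis $C\delta(1+|\RRR|+|\xi|)\ell^{1/2}<\min\{1/(4\zeta),\tilde\epsilon/\zeta\}$ is $\le C\delta\ell^{1/4}|\RRR|$; and $\|R_Q[f]:\mathcal X^0\|\le C\zeta^2\delta^2|\RRR|^2\ell^{3/4}$ by Proposition \ref{SchauderSetProps}, which by the same hypothesis is at most a small fixed fraction of $\zeta\delta\ell^{1/4}|\RRR|$. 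Applying the operator bound for $\mathcal R_F$ from Proposition \ref{FlatTotalInverse}, the second term therefore has $\mathcal X^2\times\Real^2$–norm $\le C_0\delta\ell^{1/4}|\RRR|+\tfrac14\zeta\delta\ell^{1/4}|\RRR|$ with $C_0$ universal.

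The main obstacle is the first term $\psi v-\mathcal R_F^{(v)}[\cosh^2(s)\mathcal L_F(\psi v)]$, and most of the work goes here: I must show $\mathcal R_F$ reconstructs a function supported in the interior of $\Lambda$ up to an error controlled by a \emph{universal} multiple of $\delta\ell^{1/4}|\RRR|$, not $\zeta\delta\ell^{1/4}|\RRR|$. The natural route is to expand $\psi v$ in Fourier modes in $\theta$: the difference above is a Jacobi field of $\cosh^2(s)\mathcal L_F$ on $\Lambda$ lying in $\mathcal X^2$, and the normalizations built into $\mathcal R_F$—orthogonality on the Gauss-map sphere to $\{\bar x,\bar y,\bar z\}$ in Proposition \ref{flat_inverse}, together with the explicit $\theta$–independent primitive \eqref{MIT1}—force this Jacobi field onto the bounded kernel $\underline\kappa$, with coefficients equal to the $L^2$–projections of $\psi v$ against $\kappa_x,\kappa_y,\kappa_z$ over the sphere. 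Those projections are bounded by a universal constant times $\delta\ell^{1/4}|\RRR|$ because $\psi v$ grows at most like $\cosh^{3/4}(s)$, the $\kappa$'s are bounded, and the relevant integrand decays like $\cosh^{-9/4}(s)$; alternatively one may build the orthogonality $v\perp\underline\kappa$ into $\Xi$ and kill these terms outright. Granting the principal term is $\le C_1\delta\ell^{1/4}|\RRR|$ with $C_1$ universal, the full image has $\mathcal X^2\times\Real^2$–norm $\le(C_0+C_1)\delta\ell^{1/4}|\RRR|+\tfrac14\zeta\delta\ell^{1/4}|\RRR|\le\zeta\delta\ell^{1/4}|\RRR|$ as soon as $\zeta\ge2(C_0+C_1)$; the $\Real^2$–components, being $-\mathcal R_F^{(b_j)}$ of the same small inputs, obey the analogous (easier) bound, so $\Psi(\Xi)\subset\Xi$.
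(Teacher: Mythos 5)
Your overall skeleton matches the paper's: you expand $Q[f]=Q[0]+\cosh^2(s)\mathcal L_F f+(\mathcal L_Q-\cosh^2(s)\mathcal L_F)[f]+R_Q[f]$, estimate the three error pieces exactly as the paper does (via \eqref{XSP0}, \eqref{linearerroreq} together with Proposition \ref{SchauderSetProps}, and the boundedness of $\mathcal R_F$ from Proposition \ref{FlatTotalInverse}), and aim to cancel the principal linear term against $(\psi v,b_1,b_2)$. The divergence, and the gap, is precisely in how that cancellation is achieved. The paper uses the exact reconstruction identity $\mathcal R_F\bigl[\psi'\cosh^2(s)\mathcal L_F(\psi v+b_1u_x+b_2u_y)\bigr]=(\psi v,b_1,b_2)$, so the principal term cancels identically and $\Psi(v,b_1,b_2)$ consists only of the three small pieces $-(u^0+u'+u'',\dots)$, which are then summed and absorbed using the hypothesis on $\delta$ and $\zeta\gg1$. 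You only cancel the $b_1w_x+b_2w_y$ part and are left with the residual $\psi v-\mathcal R_F^{(v)}[\cosh^2(s)\mathcal L_F(\psi v)]$, which you try to estimate rather than show vanishes.

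That estimation step does not work as written, for two reasons. First, quantitatively: $\psi v$ carries the norm bound $\|\psi v:\mathcal X^2\|\leq C\zeta\delta\ell^{1/4}|\RRR|$, so any projection of $\psi v$ onto $\kappa_x,\kappa_y,\kappa_z$ (on the cylinder or on the sphere) is bounded only by $C\zeta\delta\ell^{1/4}|\RRR|$, not by a $\zeta$-free constant times $\delta\ell^{1/4}|\RRR|$; your claimed universal bound $C_1\delta\ell^{1/4}|\RRR|$ is exactly what you cannot get this way, and with the $\zeta$ present the term cannot be absorbed by choosing $\zeta$ large. Second, structurally: the assertion that the residual, being a Jacobi field of $\cosh^2(s)\mathcal L_F$ lying in $\mathcal X^2$, is ``forced onto the bounded kernel $\underline\kappa$'' is unjustified. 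The space $\mathcal X^2$ lives on the compact region $\Lambda$, where membership imposes no asymptotic condition, so the solution space of $\cosh^2(s)\mathcal L_F u=0$ in $\mathcal X^2$ is not three-dimensional; to force the residual into $\underline\kappa$ one would have to track the global extension and decay properties built into $\mathcal R_F^\perp$, $\mathcal R_{\rho\,\alpha}$ and the explicit primitive \eqref{MIT1}, and even then the growing $\theta$-independent Jacobi field (linear in $s$) is compatible with the $\cosh^{3/4}(s)$ weight, so it is not excluded by growth alone. The alternative you float --- building $v\perp\underline\kappa$ into $\Xi$ --- changes Definition \ref{SchauderSet}, which is fixed and used in the fixed-point argument and the main theorem, so it is not available here. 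What is needed (and what the paper asserts) is the identity $\mathcal R_F^{(v)}[\cosh^2(s)\mathcal L_F(\psi v)]=\psi v$ together with the vanishing of its $\Real^2$-components, i.e.\ that $\mathcal R_F$ is a genuine left inverse on functions of this form; proving or citing that, rather than estimating a leftover kernel discrepancy, is the missing step.
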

\begin{proof}

We begin by noting
\begin{align}\notag
 Q[\psi v + b_1u_x + b_1 u_y] & = Q[0] +\mathcal{L}_Q [  \psi v + b_1u_x + b_2 u_y] +  R_Q[ \psi v + b_1u_x + b_2 u_y] \\ \notag
& =  Q[0] + \cosh^{2}(s) \mathcal{L}_F (\psi  v + b_1u_x + b_2 u_y) +  R_Q[ \psi v + b_1u_x + b_2 u_y] \\ \notag
& \quad + \left( \mathcal{L}_Q - \cosh^2(s)\mathcal{L}_F\right) (\psi v + b_1 u_x + b_2 u_y).\\ \notag
\end{align}

First define $(u^0, b_{x}^0, b_{ y}^0) : = \mathcal{R}_F [ \psi'Q[0]]$.
 Proposition \ref{FlatTotalInverse}, the estimate \eqref{XSP0}, and the uniform bounds on $\psi'$ imply
\begin{align} \label{SMP1}
\|u^0: \mathcal{X}^2\|+ |b_{x}^0|+ |b_{y}^0| \leq C\|\psi' Q[0]: \mathcal X^0_0\| \leq C \delta |\RRR| \ell^{1/4}.
\end{align}
Set $(u', b_x', b'_y) :  = \mathcal{R}_F[\psi'   R_Q[ \psi v + b_1u_x + b_2 u_y]] $. Then, Propositions \ref{FlatTotalInverse} and \ref{SchauderSetProps} imply
\begin{align} \label{SMP3}
\|u': \mathcal{X}^2\|+ |b'_x|+ |b'_y|  \leq C\|  \psi'  R_Q[ \psi v + b_1u_x + b_2 u_y] : \mathcal{X}^0_0\| \leq C \zeta^2 \delta^2 |\RRR|^2 \ell^{3/4}.
\end{align}
 Finally, set
 \begin{align*}
 R'' : = \left( \mathcal{L}_Q - \cosh^2(s)\mathcal{L}_F\right) (\psi v + b_1 u_x + b_2 u_y)
 \end{align*}and $(u'', b_x'', b_y'') : = \mathcal{R}_F [\psi'R'']$. By Proposition \ref{FlatTotalInverse}, \eqref{linearerroreq}, the estimates for $v, b_1, b_2$, and the decay control on $\psi$,
 \begin{align} \label{SMP4}
\|u'': \mathcal{X}^2\|+ |b''_x|+ |b''_y| \leq C\| R'' : \mathcal{X}^0_0 \| \leq C  \delta^2(1+|\RRR|+|\xi|)\zeta |\RRR| \ell^{ 1/2}\leq C \delta \zeta |\RRR| \ell^{ 1/2}.
 \end{align} The definitions of $u_x, u_y$ imply $\mathcal L_F (b_1u_x+b_2u_y)=0$ for all $|s| \geq 2$. Moreover, as $\psi v \equiv 0$ for all $s$ with $\cosh(s) > \ell/2$, $\psi'\mathcal L_F (\psi v+ b_1u_x+b_2u_y)=\mathcal L_F (\psi v+ b_1u_x+b_2u_y)$. In other words,
 \[
 \mathcal R_F[\psi'\mathcal L_F (\psi v+ b_1u_x+b_2u_y)] = (\psi v, b_1, b_2).
 \]
Therefore
\begin{align*} 
\Psi(v, b_1, b_2) = (- u^0 - u' - u'', - b^0_{ x} - b_x' - b_x'', - b^0_{ y} - b_y' - b_y'').
\end{align*}
Combining (\ref{SMP1}),  (\ref{SMP3}) and (\ref{SMP4}), as long as $\zeta>4C$, the hypothesis on $\delta$ allows us to conclude
\begin{align}
\| - u_0 - u' - u'': \mathcal{X}^2 \|\leq C \delta |\RRR| \ell^{1/4} + C \zeta^2 \delta^2 |\RRR|^2 \ell^{3/4} + C \zeta \delta |\RRR| \ell^{ 1/2} \leq  3\zeta \delta \ell^{1/4}|\RRR|/ 4 
\end{align}Similar estimates hold for the coefficients of $u_x, u_y$:
\begin{align}
| b^0_{x} - b_x' - b_x''| \leq  3\zeta \delta \ell^{1/4}|\RRR|/ 4 , \quad  | b^0_{ y} - b_y' - b_y''| \leq  3\zeta \delta \ell^{1/4}|\RRR|/ 4. 
\end{align}
\end{proof}

\section{The Main Theorem}\label{MainTheorem}
We prove the main theorem via a fixed point argument and all of the necessary estimates have now been recorded. We require one further technical proposition which allows us to prove embeddedness in the main theorem by considering graphs over $G$ as coming from graphs over $F$.
\begin{proposition} \label{SubstituteKernelGraphsAreEmbedded}
There is a  constant $\epsilon_F > 0$ so that: Given constants $b_x$ and $b_y$ and a vector field $\mathbf v: \mathbb R^2 \rightarrow \mathbb{R}^{3}$, satisfying 
\begin{align}\notag
|b_x|, \, |b_y| \leq \epsilon_F, \quad \|\mathbf v :C^1(\mathbb R^2)\| \leq \epsilon_F, 
\end{align}
 the surface $F + \mathbf v +  (b_x u_x + b_y u_y)\nu_F$ is an  embedded surface in $\mathbb{R}^3$.
\end{proposition}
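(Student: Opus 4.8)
The plan is to recognize $\Phi:=F+\mathbf{v}+(b_xu_x+b_yu_y)\nu_F$ as a linear shear of $\mathbb R^3$ applied to a genuinely small $C^1$ perturbation of the helicoidal immersion $F$, and then to prove that such perturbations of $F$ are embedded. The immersion property is immediate: by \eqref{FGeomData} the vectors $\partial_sF,\partial_\theta F$ are orthogonal of common length $\cosh(s)\ge1$, so $DF$ has both singular values equal to $\cosh(s)$; from the explicit formulas for $u_x,u_y,\nu_F$ one checks $|D\nu_F|\le C$ and hence that the perturbation field $\mathbf{v}+(b_xu_x+b_yu_y)\nu_F$ has $C^1$-norm at most $C\epsilon_F\cosh(s)$. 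Thus for $\epsilon_F$ small the smallest singular value of $D\Phi$ is at least $\tfrac12\cosh(s)>0$ (one may also invoke Proposition~\ref{still_regular}).

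For the shear reduction, write $F=(F_1,F_2,F_3)$ and use $\nu_F=(-\cosh^{-1}(s)\cos\theta,\cosh^{-1}(s)\sin\theta,\tanh(s))$. The first two components of $(b_xu_x+b_yu_y)\nu_F$ are bounded functions of $(s,\theta)$ with bounded derivatives, while the third equals $\tfrac1{4\pi}\psi(s)\sinh(s)(b_x\cos\theta+b_y\sin\theta)=\tfrac1{4\pi}\psi(s)\,(b_xF_2+b_yF_1)$. Since $1-\psi$ is supported in $\{|s|\le2\}$, where $F_1,F_2$ and their derivatives are bounded, we obtain
\begin{align*}
(b_xu_x+b_yu_y)\nu_F=L_b(F)+\mathbf{r},\qquad L_b(x,y,z):=\Big(0,0,\tfrac1{4\pi}(b_xy+b_yx)\Big),\qquad \|\mathbf{r}:C^1(\mathbb R^2)\|\le C\big(|b_x|+|b_y|\big).
\end{align*}
The map $A_b:=\mathrm{Id}+L_b$ is a linear automorphism of $\mathbb R^3$ with $A_b^{-1}=\mathrm{Id}-L_b$ (as $L_b^2=0$) and $\|A_b-\mathrm{Id}\|+\|A_b^{-1}-\mathrm{Id}\|\le C(|b_x|+|b_y|)$, so $\Phi=A_b\circ F+(\mathbf{v}+\mathbf{r})$, i.e. $A_b^{-1}\circ\Phi=F+\tilde{\mathbf v}$ with $\tilde{\mathbf v}:=A_b^{-1}(\mathbf{v}+\mathbf{r})$ satisfying $\|\tilde{\mathbf v}:C^1(\mathbb R^2)\|\le C\epsilon_F$. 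Since $A_b$ is an ambient linear isomorphism, $\Phi$ is embedded if and only if $F+\tilde{\mathbf v}$ is; thus it suffices to produce $\tilde\epsilon_F>0$ so that $F+\tilde{\mathbf v}$ is embedded whenever $\|\tilde{\mathbf v}:C^1(\mathbb R^2)\|\le\tilde\epsilon_F$.

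To prove the latter, note $F+\tilde{\mathbf v}$ is an immersion as above and is proper (since $|F(s,\theta)|\to\infty$ as $(s,\theta)\to\infty$ while $\tilde{\mathbf v}$ is bounded), so it remains to check injectivity. Suppose $F(p)+\tilde{\mathbf v}(p)=F(q)+\tilde{\mathbf v}(q)$ with $p=(s_1,\theta_1)$, $q=(s_2,\theta_2)$; then $|F(p)-F(q)|\le2\tilde\epsilon_F$. Comparing third coordinates gives $|\theta_1-\theta_2|\le2\tilde\epsilon_F$, and comparing horizontal parts gives $(\sinh s_1-\sinh s_2)^2+2\sinh s_1\sinh s_2\,(1-\cos(\theta_1-\theta_2))\le4\tilde\epsilon_F^2$. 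When $\sinh s_1\sinh s_2\ge0$ this forces $|\sinh s_1-\sinh s_2|\le2\tilde\epsilon_F$, hence $|s_1-s_2|\le2\tilde\epsilon_F$ because $|\sinh a-\sinh b|\ge|a-b|$; when $\sinh s_1\sinh s_2<0$, using $|\theta_1-\theta_2|<\pi/2$ one gets $\sinh^2s_1+\sinh^2s_2\le4\tilde\epsilon_F^2$ and hence $|s_1|,|s_2|\le2\tilde\epsilon_F$. In either case $|p-q|_{\mathbb R^2}\le5\tilde\epsilon_F$. On the parameter ball $B_{5\tilde\epsilon_F}(p)$ the differential $D(F+\tilde{\mathbf v})$ differs from the fixed linear map $DF(p)$ by at most $C\tilde\epsilon_F\cosh(s_1)$ (from the bounds on $D^2F$ and $D\tilde{\mathbf v}$), which for $\tilde\epsilon_F$ small is below $\tfrac12\cosh(s_1)$, i.e. below half the least singular value of $DF(p)$; a standard inverse-function estimate then shows $F+\tilde{\mathbf v}$ is injective on $B_{5\tilde\epsilon_F}(p)$, forcing $p=q$. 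Hence $F+\tilde{\mathbf v}$, being a proper injective immersion, is an embedding.

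The one nontrivial point is the shear reduction: the perturbation $(b_xu_x+b_yu_y)\nu_F$ genuinely grows like $\cosh(s)$ and is \emph{not} small in $C^1(\mathbb R^3)$, so a crude "small perturbation'' argument fails; what saves the day is that its entire unbounded part is a cutoff of the linear shear $L_b(F)$, which is absorbed into the ambient map $A_b$, leaving a truly small $C^1$ perturbation of $F$. After that reduction the argument is elementary, the key being that the $\cosh(s)$ stretching of $F$ cancels between the chord-arc bound $|p-q|\lesssim|F(p)-F(q)|$ and the radius of the ball on which $F+\tilde{\mathbf v}$ is injective.
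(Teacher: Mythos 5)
Your proposal is correct, but it takes a genuinely different route from the paper's. The paper keeps the exponentially growing perturbation $(b_xu_x+b_yu_y)\nu_F$ intact and argues by a direct separation estimate: writing $u_x\nu_F$ and $u_y\nu_F$ explicitly, it shows that two parameter points with equal images must (in the case $s_1>0>s_2$) have angular coordinates differing by nearly an odd multiple of $\pi$, so that $|F(x_1)-F(x_2)|^2\geq(1-C\epsilon)(\sinh s_1+\sinh s_2)^2+(\pi-2\epsilon)^2$, which dominates the difference of the kernel-graph terms, of size $C\epsilon\left(1+|\sinh s_1+\sinh s_2|\right)$; the $\pi$-gap between sheets of the helicoid survives the perturbation, and the $C^1$-small field $\mathbf v$ is handled at the end. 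You instead observe that the entire unbounded part of $(b_xu_x+b_yu_y)\nu_F$ is a cutoff of the linear shear $L_b\circ F$, $L_b(x,y,z)=\bigl(0,0,\tfrac1{4\pi}(b_xy+b_yx)\bigr)$, absorb it into the ambient automorphism $A_b=\mathrm{Id}+L_b$ (using $L_b^2=0$), and reduce to showing that $F+\tilde{\mathbf v}$ is embedded whenever $\|\tilde{\mathbf v}\|_{C^1}$ is small, which you prove via the chord estimate forcing $|p-q|\leq 5\tilde\epsilon_F$, a quantitative inverse-function argument at the scale $\cosh(s_1)$, and properness. I checked the key points: the identity $\sinh(s)(b_x\cos\theta+b_y\sin\theta)=b_xF_2+b_yF_1$, the $C^1$ bound on the remainder $\mathbf r$ (horizontal part plus the $(1-\psi)$ correction supported in $\{|s|\leq 2\}$), and the two-case analysis of the horizontal separation are all sound. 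Your route is arguably cleaner and more uniform: it treats all sign configurations of $s_1,s_2$ at once (the paper details only $s_1>0>s_2$ and asserts the remaining case is similar), and it isolates the structural reason the proposition is true, namely that the substitute-kernel graphs are asymptotically ambient shears of the helicoid. What the paper's direct estimate buys is an explicit quantitative separation ($\pi-C\epsilon$) between distinct sheets, stated in the same language used in Section \ref{MainTheorem} to convert graphs over $G$ into graphs over $F$; for the proposition as stated, either argument suffices.
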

\begin{proof}
In the following, we refer to the angle that the projection of a vector onto the plane $\{ z = 0\}$ makes with the vector $\e_x$ as the argument, and for a vector $\mathbf v$ we denote it by $Arg (\mathbf v)$. Recall that 
\begin{align} \label{SKGAE1}
F(s, \theta) = \sinh(s) \e_r (\theta) + \theta \e_z \\ \notag
\nu_F(s, \theta) = - \cosh^{-1} (s) \e_r^\perp (\theta) + \tanh(s) \e_z
\end{align}
where we have denoted $\e_r(\theta) : = \sin (\theta) \e_x + \cos (\theta) \e_y$ and $\e_r^\perp(\theta)  = \cos (\theta) \e_x - \sin (\theta) \e_y$. Set $\tilde{F} : = F +\mathbf v + (b_x u_x + b_y u_y) \nu_F$. Then it follows from (\ref{SKGAE1}) that 
\begin{align} \label{SKGAE2}
|Arg(\tilde{F}(s, \theta)) - Arg(F(s, \theta)) | \leq C \epsilon.
\end{align}
Let $x_i = (s_i, \theta_i)$, $i = 1, 2$ be two points such that $\tilde{F} (x_1) = \tilde{F}(x_2)$ and assume first that $s_1 > 0$, $s_2 < 0$. Then it follows from (\ref{SKGAE2}) that $|\theta_1 - \theta_2 - (2 k  + 1)\pi| \leq C\epsilon.$ We can write 
\begin{align} \notag 
u_x \nu_F(s, \theta) =   -\frac{1}{4 \pi}\psi_0 (s)\cos (\theta)  \e_r^\perp (\theta)+ \frac{1}{4 \pi}\psi_0 (s)\cos (\theta)  \sinh(s) \e_z, \\ \notag
u_y \nu_F(s, \theta) =  - \frac{1}{4 \pi}\psi_0 (s) \sin (\theta) \e_r^\perp (\theta) +  \frac{1}{4 \pi}\psi_0 (s) \sin (\theta) \sinh(s) \e_z.
\end{align}
This then gives that 
\begin{align} \label{SKGAE4}
\left|\left.\left( b_xu_x  + b_y u_y \right) \nu_F\right|_{x_1}^{x_2} \right|^2 \leq C \epsilon^2 \left(1 +  (\sinh(s_1) + \sinh(s_2))^2 \right)
\end{align}
Moreover, from \eqref{SKGAE1}, we have 
\begin{align}\label{SKGAE5}
\left| F(x_2) - F(x_1) \right|^2 \geq (1 -  C \epsilon) (\sinh(s_2) + \sinh(s_1))^2 +  ( \pi - 2 \epsilon)^2
\end{align}
Combining \eqref{SKGAE4} and \eqref{SKGAE5} implies
\begin{align}\notag
|\tilde{F} (x_2)-\mathbf v(x_2) - \tilde{F}(x_1)+\mathbf v(x_1)| \geq \pi - C \epsilon.
\end{align}Here $C$ is just a universal constant depending on $F$. Now suppose $|s_1-s_2|  \leq 10$. Then the uniform $C^1$ bounds on $\mathbf v$ imply
\[
|\tilde F(x_2) - \tilde F(x_1)| \geq |\tilde{F} (x_2)-\mathbf v(x_2) - \tilde{F}(x_1)+\mathbf v(x_1)|- |\mathbf v(x_2) -\mathbf v(x_1)| \geq \pi - C \epsilon.
\]On the other hand, if $|s_1-s_2|>10$, then the $C^0$ estimate on $\mathbf v$ is enough as in this case,
\[
|\tilde F(x_2) - \tilde F(x_1)| \geq |\tilde{F} (x_2)-\mathbf v(x_2) - \tilde{F}(x_1)+\mathbf v(x_1)|- |\mathbf v(x_2) -\mathbf v(x_1)| \geq \pi - C \epsilon - |\mathbf v(x_1)| -|\mathbf v(x_2)|\geq \pi - C \epsilon.
\]Since $C$ depends only on $F$, we can choose $0<\epsilon<\epsilon_F$ so that $\pi - C\epsilon>0$ and thus no self-intersections exist. 
A similar argument gives the same result when both $s_1$ and $s_2$ are positive. The result then follows immediately. 
 \end{proof}

We now gather all of the estimates from the previous section in combination with the previous proposition to prove the main theorem.
\begin{theorem}
Given an anti-symmetric matrix $\RRR$, $ \xi \in \Real$  and $\zeta \gg 1$, choose any $0<\delta<\delta_0/|\xi|$ and $\ell > 16$ such that $C \delta(1+|\RRR|+ |\xi|)\ell<\min\{1/(4\zeta),\tilde \epsilon /\zeta, \epsilon_F/(4\zeta)\}$. Here $C$ is a universal constant arising from the norm bounds in the linear problem and $\delta_0$ comes from Theorem \ref{BK1Theorem}. There exists $(v,b_x, b_y) \in \Xi$ such that
the surface $G+ e^{\delta \xi \theta}\left(v + b_x u_x + b_y u_y + u_{0}\right)\nu_G$ 
\begin{enumerate}
\item is an immersed smooth surface with boundary. 
\item is minimal on the domain $\{(s,\theta) | \cosh(s) \leq \ell/4\}$.
\item is embedded when  
\begin{align} \notag
\ell \leq \frac{1}{\delta \xi} \left( \frac{e^{\pi \xi/ \rho_0} - 1}{e^{\pi \xi/\rho_0} + 1}\right) \sqrt{ \frac{\tau_0^2 + \xi^2}{\rho_0^2 + \xi^2}}.
\end{align}
\end{enumerate}

\end{theorem}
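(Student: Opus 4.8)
The strategy is to assemble the pieces from Sections \ref{TheLinearProblem}--\ref{MainTheorem} via a Schauder fixed-point argument. First I would verify that the hypotheses on $\delta$ and $\ell$ in the statement imply the hypotheses of Propositions \ref{SchauderSetProps} and \ref{SchauderMapProperties} (the bound $C\delta(1+|\RRR|+|\xi|)\ell<\min\{1/(4\zeta),\tilde\epsilon/\zeta,\epsilon_F/(4\zeta)\}$ is the strongest of the various constraints, since $\ell\geq\ell^{1/2}\geq\ell^{1/4}$ for $\ell>1$). With $\zeta$ fixed sufficiently large, Proposition \ref{SchauderMapProperties} gives a continuous map $\Psi:\Xi\to\Xi$ on the compact convex set $\Xi\subset\mathcal X^2\times\Real^2$, so Schauder's fixed-point theorem produces $(v,b_x,b_y)\in\Xi$ with $\Psi(v,b_x,b_y)=(v,b_x,b_y)$. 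Unwinding the definition of $\Psi$, this fixed point satisfies
\begin{align}\notag
\psi v - v = \mathcal R_F[\psi' Q[\psi v + b_x u_x + b_y u_y]] \text{ in each component.}
\end{align}
On the domain $\{\cosh(s)\leq\ell/4\}$ one has $\psi\equiv 1$ and $\psi'\equiv 1$, so $v=\psi v$ there, hence $f:=v+b_xu_x+b_yu_y$ satisfies $\cosh^2(s)\mathcal L_F f' = \psi' Q[f]$-type relations that, combined with $Q[f]=Q[0]+\mathcal L_Q[f]+R_Q[f]$ and the identity $\mathcal R_F[\psi'\mathcal L_F(\cdots)]=(\psi v,b_x,b_y)$ used in the proof of Proposition \ref{SchauderMapProperties}, force $Q[f]=0$ on $\{\cosh(s)\leq\ell/4\}$. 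Since $Q[u]=0$ is equivalent to $H\equiv 0$ for the normal graph $G+e^{\delta\xi\theta}(u+u_0)\nu_G$, this gives claims (1) and (2): the surface $G_w$ with $w=e^{\delta\xi\theta}(v+b_xu_x+b_yu_y+u_0)$ is a smooth immersed minimal disk with boundary on $\Lambda\cap\{\cosh(s)\leq\ell/4\}$.

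For embeddedness (claim (3)), I would combine Proposition \ref{SubstituteKernelGraphsAreEmbedded} with the diffeomorphism property of $M$ from Proposition \ref{CurveClassification}(2). The key observation is that $G_w = M\circ F + e^{\delta\xi\theta}(v+b_xu_x+b_yu_y+u_0)\nu_G$, and locally, after applying the rotation-dilation normalization $e^{-\delta\xi\theta}R(\theta)$, this surface can be written as a graph over $F$ of the form $F + \mathbf v + (b_x u_x + b_y u_y)\nu_F$ where $\mathbf v$ collects the contribution of the $M$-perturbation (controlled by Lemma \ref{close_to_a_straight_line}, so $\|\mathbf v:C^1\|\leq C\delta(1+|\RRR|+|\xi|)\ell$ using that $|s|\leq\arccosh(\ell/4)$ on $\Lambda$), together with the $v\nu_F$ and $u_0\nu_F$ contributions (controlled by the $\mathcal X^2$ bound $\|v:\mathcal X^2\|\leq\zeta\delta|\RRR|\ell^{1/4}$, so $\sup_\Lambda|v|\leq C\zeta\delta|\RRR|\ell$, and the estimate for $u_0$ in Theorem \ref{BK1Theorem}). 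The hypothesis $C\delta(1+|\RRR|+|\xi|)\ell<\epsilon_F/(4\zeta)$ ensures all these are below $\epsilon_F$, so Proposition \ref{SubstituteKernelGraphsAreEmbedded} applies: the normalized surface is an embedded graph over $F$ in a neighborhood of each fiber. Pushing forward by $M$, which is a diffeomorphism on the tube $T(\alpha)$ with $\alpha$ as in Proposition \ref{CurveClassification}(2), and checking that the surface $G_w$ lies inside $T(\alpha)$ precisely when $\ell\leq\frac{1}{\delta\xi}\left(\frac{e^{\pi\xi/\rho_0}-1}{e^{\pi\xi/\rho_0}+1}\right)\sqrt{\frac{\tau_0^2+\xi^2}{\rho_0^2+\xi^2}}$ (the stated bound is exactly the condition that the radial extent $e^{\delta\xi\theta}\cosh(s)\cdot(\text{small})$ fits inside the tube radius), the diffeomorphism property transfers embeddedness from the normalized picture to $G_w$ itself.

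The main obstacle I expect is the embeddedness argument, specifically verifying carefully that $G_w$ genuinely admits the local graphical description over $F$ with $C^1$-small perturbation vector field $\mathbf v$, uniformly over the (non-uniformly bounded, growing like $\arccosh(\ell)$) range of $s$. The subtlety is that $u_x,u_y$ grow like $\cosh(s)$, so $\sup_\Lambda|b_xu_x+b_yu_y|\approx\zeta\delta|\RRR|\ell^{5/4}$ is much larger than the $\mathcal X^2$-part; one must be sure the $\ell^{5/4}$ growth is still beaten by the required smallness threshold, which is why the main theorem imposes the more restrictive $C\delta(1+|\RRR|+|\xi|)\ell<\epsilon$ rather than the weaker $\ell^{5/4}$-type bound appearing in Proposition \ref{SchauderMapProperties} — or rather, since $\epsilon_F$ is a fixed constant from $F$ alone and we need $|b_xu_x+b_yu_y|\cdot(\text{something})<\epsilon_F$ pointwise on $\Lambda$, one checks the relevant quantity entering Proposition \ref{SubstituteKernelGraphsAreEmbedded} is $|b_x|,|b_y|\leq\zeta\delta|\RRR|\ell^{1/4}$ directly (not multiplied by $\cosh(s)$), so the threshold $\zeta\delta|\RRR|\ell^{1/4}<\epsilon_F$ follows from $C\delta(1+|\RRR|+|\xi|)\ell<\epsilon_F/(4\zeta)$. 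The bookkeeping of which norm controls which geometric quantity, and confirming the tube-containment inequality matches the stated embeddedness bound, is where the real care is needed.
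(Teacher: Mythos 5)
Your proposal is correct and follows essentially the same route as the paper: Schauder's fixed point theorem on $\Xi$ combined with the defining property of $\mathcal{R}_F$ and the locality of the cutoffs to get $Q[v+b_xu_x+b_yu_y]=0$ on $\{\cosh(s)\leq \ell/4\}$, and for embeddedness a local graphical description over $F$ (with the $M$-perturbation, $v$, and $u_0$ collected into a $C^1$-small vector field and $|b_x|,|b_y|$ entering Proposition \ref{SubstituteKernelGraphsAreEmbedded} directly) followed by the diffeomorphism property of $M$ on the tube from Proposition \ref{CurveClassification} to pass to global embeddedness under the stated bound on $\ell$. Your identification of where the real care is needed (the $\cosh(s)$ growth of $u_x,u_y$ versus the fixed threshold $\epsilon_F$) matches the paper's treatment.
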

To conclude the theorem found in the introduction, simply set $\epsilon_1 \leq \min\{1/(4C\zeta),\tilde \epsilon /(C\zeta), \epsilon_F/(4C\zeta)\}$.
\begin{proof}

To prove embeddedness, note that for $|z| \leq \frac 1\delta$, extending the calculation of \eqref{CC2} to consider also $D^2M$ implies
\begin{align*}
\|M (x, y, z) -(x,y,z):C^1(\{|z| \leq  1/\delta\})\|=  O (\delta (z^2 + xz + yz)(1+|\RRR|)).
\end{align*}
Thus for $|\theta| \leq 4\pi$ and $\Lambda_{4\pi}:= \{(s,\theta) | (s,\theta) \in \Lambda, |\theta| \leq 4\pi\}$, 
\begin{align*}
\|G(s, \theta) - F(s, \theta):C^1(\Lambda_{4\pi}) \| \leq \delta O (1 + \cosh(s)(1+ |\RRR|)).
\end{align*}
Let $\mathbf v:=  e^{ \delta \xi \theta} f \nu_G$ and $\mathbf w = e^{ \delta \xi \theta} f\left(\nu_G-\nu_F\right)$. Then\begin{align*}
\|G + e^{ \delta \xi \theta} f \nu_G - \left( F +  e^{ \delta \xi \theta} f \nu_F\right):C^1(\Lambda_{4\pi})\| &= 
\|G + \mathbf v-\left( F-\mathbf v\right)+ \mathbf w:C^1(\Lambda_{4\pi})\| \\&\leq \delta O(1 +\cosh(s)(1+|\RRR|)) +\|\mathbf w:C^1(\Lambda_{4\pi})\|.
\end{align*}
Taking $f$ of the form $f = v + u_x b_x + u_y b_y+u_0$ for $(v, b_x, b_y) \in \Xi$, and using the estimate of \eqref{SSP3}, 
\begin{align*}
\|\mathbf w:C^1(\Lambda_{4\pi})\|& \leq \|f :C^1(\Lambda_{4\pi})\| \|\nu_G - \nu_F:C^1(\Lambda_{4\pi})\|\\
&\leq C\delta^2 \zeta(|\RRR|+ |\RRR|^2) \ell^{5/4}.
\end{align*}Thus, the graph over $G$ can be viewed as a graph over $F$ by $(b_xu_x+b_yu_y)\nu_F + \tilde{\mathbf  w}$ where $\tilde{\mathbf  w} = (v+u_0)\nu_F + \mathbf w+(G-F)$. The definition of $\Xi$ implies
\[
|b_x| + |b_y| \leq \epsilon_F/2.
\]As $\delta \ell <1$, the perturbation vector field has the bound 
\begin{align*}
\|\tilde{\mathbf w}:C^1(\Lambda_{4\pi})\| & \leq \|G-F+\mathbf w:C^1(\Lambda_{4\pi})\| + \|(v + u_0)\nu_F:C^1(\Lambda_{4\pi})\|\\
&\leq C\left(\delta \ell(1+|\RRR|) + \delta \zeta (1+|\RRR|)\ell+\delta \zeta |\RRR| \ell^{1/4}\right)\\
& \leq\epsilon_F.
\end{align*}
The self-similarity of the curve implies that, up to some translation, the same argument can be done for any domain of $\Lambda$ with $\theta \in [\tilde \theta - 2\pi, \tilde \theta + 2\pi]$. Applying Proposition \ref{SubstituteKernelGraphsAreEmbedded}, we conclude the surface is locally an embedding. We appeal to Proposition \ref{CurveClassification} to get global embeddedness.

The minimality will follow from a fixed point argument. Namely, since $\Phi(\Xi) \subset \Xi$ and $\Xi$ is a compact, convex subset of a Banach space, there exists $(v,b_x, b_y) \in \Xi$ such that
\[
(v,b_x, b_y) = (\psi v, b_x, b_y) - \mathcal R_F[\psi'Q[\psi v + b_xu_x+b_yu_y]].
\]This implies $\mathcal R_F[\psi'Q[\psi v + b_xu_x+b_yu_y]]=((1-\psi)v,0,0)$ and thus on the region where $\psi = 1$,
\[
Q[ v + b_xu_x+b_yu_y]=0.
\]The definition of $Q$, see \eqref{Qdef}, implies
\[
H(\underline{\tilde \nabla }G + \mathcal E_\delta[v + b_xu_x+b_yu_y+u_0])=0
\] and thus 
$G+ e^{\delta \xi \theta}\left(v + b_x u_x + b_y u_y + u_{0}\right)\nu_G$ has $H = 0$.

\end{proof}

\bibliographystyle{amsplain}
\bibliography{Biblio}
 \end{document}